\newsavebox{\@brx}
\newcommand{\llangle}[1][]{\savebox{\@brx}{\(\m@th{#1\langle}\)}%
  \mathopen{\copy\@brx\kern-0.5\wd\@brx\usebox{\@brx}}}
\newcommand{\rrangle}[1][]{\savebox{\@brx}{\(\m@th{#1\rangle}\)}%
  \mathclose{\copy\@brx\kern-0.5\wd\@brx\usebox{\@brx}}}
\newtheorem{theorem}{Theorem}
\newtheorem{lemma}[theorem]{Lemma}
\theoremstyle{definition}
\newtheorem{definition}[theorem]{Definition}
\theoremstyle{remark}
\numberwithin{theorem}{section}
\numberwithin{equation}{section}
\DeclareMathOperator{\Div}{div}
\newcommand{\N}{\ensuremath{\mathbb{N}}}
\newcommand{\R}{\ensuremath{\mathbb{R}}}
\newcommand{\mint}{- \mskip-19,5mu \int}
\renewcommand{\u}{\boldsymbol{u}}
\newcommand{\lbbracket}{\boldsymbol{(}}
\newcommand{\rbbracket}{\boldsymbol{)}}
\newcommand{\dx}{\mathrm{d}x}
\newcommand{\dt}{\mathrm{d}t}
\newcommand{\power}[2]{\bm{#1^{\mbox{\unboldmath{\scriptsize$#2$}}}}}
\newcommand{\babs}[1]{\big|#1\big|}
\def\Xint#1{\mathchoice
    {\XXint\displaystyle\textstyle{#1}}%
    {\XXint\textstyle\scriptstyle{#1}}%
    {\XXint\scriptstyle\scriptscriptstyle{#1}}%
    {\XXint\scriptscriptstyle\scriptscriptstyle{#1}}%
    \!\int}
\def\XXint#1#2#3{\setbox0=\hbox{$#1{#2#3}{\int}$}
    \vcenter{\hbox{$#2#3$}}\kern-0.5\wd0}
\def\bint{\Xint-}
\def\dashint{\Xint{\raise4pt\hbox to7pt{\hrulefill}}}
\def\Xiint#1{\mathchoice
    {\XXiint\displaystyle\textstyle{#1}}%
    {\XXiint\textstyle\scriptstyle{#1}}%
    {\XXiint\scriptstyle\scriptscriptstyle{#1}}%
    {\XXiint\scriptscriptstyle\scriptscriptstyle{#1}}%
    \!\iint}
\def\XXiint#1#2#3{\setbox0=\hbox{$#1{#2#3}{\iint}$}
    \vcenter{\hbox{$#2#3$}}\kern-0.5\wd0}
\def\biint{\Xiint{-\!-}}
\renewcommand{\epsilon}{\varepsilon}
\newcommand{\eps}{\varepsilon}
\renewcommand{\rho}{\varrho}
\renewcommand{\epsilon}{\varepsilon}
\renewcommand{\rho}{\varrho}
\renewcommand{\d}{\:\! \mathrm{d}}
\renewcommand{\u}{\boldsymbol{u}}
\renewcommand{\a}{\boldsymbol{a}}
\newcommand{\A}{\mathbf{A}}
\DeclareMathOperator{\loc}{loc}
\numberwithin{equation}{section}
\subjclass[2020]{35B65, 35K40, 35K55}
\keywords{Doubly nonlinear systems, higher integrability, gradient estimate, reverse Hölder inequality}
\begin{document}
\renewcommand{\refname}{References} 
\renewcommand{\abstractname}{Abstract} 
\title[Higher integrability for singular doubly nonlinear systems]{Higher integrability for singular doubly nonlinear systems}
\author[K.~Moring]{Kristian Moring}
\address{Kristian Moring\\
	Fakult\"at f\"ur Mathematik, Universit\"at Duisburg-Essen\\
	Thea-Leymann-Str.~9, 45127 Essen, Germany}
\email{kristian.moring@uni-due.de}

\author[L.~Schätzler]{Leah Schätzler}
\address{Leah Schätzler\\
	Fachbereich Mathematik, Paris-Lodron-Universität Salzburg\\
	Hellbrunner Str.~34, 5020 Salzburg, Austria}
\email{leahanna.schaetzler@plus.ac.at}

\author[C.~Scheven]{Christoph Scheven}
\address{Christoph Scheven\\
	Fakult\"at f\"ur Mathematik, Universit\"at Duisburg-Essen\\
	Thea-Leymann-Str.~9, 45127 Essen, Germany}
\email{christoph.scheven@uni-due.de}

\date{}

\begin{abstract}
We prove a local higher integrability result for the spatial gradient of weak solutions to doubly nonlinear parabolic systems whose prototype is
\begin{equation*}
\partial_t \left(|u|^{q-1}u \right) -\Div  \left( |Du|^{p-2} Du \right) = \Div \left( |F|^{p-2} F \right) \quad \text{ in } \Omega_T := \Omega \times (0,T)
\end{equation*}
with parameters $p>1$ and $q>0$ and $\Omega\subset\R^n$.
In this paper, we are concerned with the ranges $q>1$ and $p>\frac{n(q+1)}{n+q+1}$.
A key ingredient in the proof is an intrinsic geometry that takes both the solution $u$ and its spatial gradient $Du$ into account.
\end{abstract}
\makeatother

\maketitle

\section{Introduction}
Let $\Omega \subset \R^n$, $n \geq 2$, be an open set and $0<T<\infty$.
By $\Omega_T := \Omega \times (0,T)$ we denote the space-time cylinder in $\R^{n+1}$.
In this paper we investigate doubly nonlinear systems of the form
\begin{equation}
	\partial_t \left(|u|^{q-1}u \right) -\Div  \left( |Du|^{p-2} Du \right) = \Div \left( |F|^{p-2} F \right)
	\quad \text{ in } \Omega_T,
	\label{eq:prototype}
\end{equation}
where $q > 0$ and $p >1$.
Here, the solution is a map $u \colon \Omega_T \to \R^N$ for some $N \in \N$.
Applications include the description of filtration processes, non-Newtonian fluids, glaciers, shallow water flows and friction-dominated flow in a gas network, see \cite{Alonso-etal,Bamberger-etal,Kalashnikov,Leugering-Mophou,Mahaffy,Vazquez} and the references therein.
Note that for $q=1$ \eqref{eq:prototype} reduces to the parabolic $p$-Laplace system, while for $p=2$ it is the porous medium system (also called fast diffusion system in the singular case $q>1$).
Further, the homogeneous equation with $p=q+1$ is often called Trudinger's equation in the literature.
This special case divides the parameter range into two parts where solutions to \eqref{eq:prototype} behave differently.
In the slow diffusion case $p>q+1$, information propagates with finite speed and solutions may have compact support whereas in the fast diffusion case $p<q+1$ the speed of propagation is infinite and extinction in finite time is possible.
Further, \eqref{eq:prototype} becomes singular as $u \to 0$ and $Du \to 0$ if $q>1$ and $1<p<2$ respectively, and degenerates as $u \to 0$ and $Du \to 0$ if $0<q<1$ and $p>2$ respectively.
In this paper, we are interested in the singular range $q>1$ with $p > \frac{n(q+1)}{n+q+1}$.
For the precise range that is covered by our main result, see Figure \ref{figure}.
Moreover, we consider general systems
\begin{equation}\label{eq:dne}
\partial_t \left(|u|^{q-1}u \right) -\Div  \A( x,t,u, Du )= \Div \left( |F|^{p-2} F \right) \quad \text{ in } \Omega_T,
\end{equation} 
where $\A \colon\Omega_T\times\R^N\times\R^{Nn}\to\R^{Nn}$ is a Carath\'eodory
function satisfying
\begin{align}
\label{assumption:A}
\begin{aligned}
\left\{
\begin{array}{c}
\A(x,t,u,\xi)\cdot \xi \geq C_o |\xi|^p, \\[5pt]
|\A(x,t,u, \xi)| \leq C_1|\xi|^{p-1}
\end{array}
\right.
\end{aligned}
\end{align}
with positive constants $0< C_o \leq C_1 < \infty$ for a.e.\ $(x,t)\in \Omega_T$ and any $(u,\zeta)\in \R^n \times \R^{Nn}$.
Local weak solutions to \eqref{eq:dne} are given by the following definition.
In particular, the spatial gradient $Du$ lies in the Lebesgue space $L^p(\Omega_T,\R^{N \times n})$, whose integrability exponent corresponds to the structure conditions \eqref{assumption:A} on $\A$.

\begin{definition} \label{def:weak-sol}
Suppose that the vector field $\A \colon \Omega_T \times \R^N \times \R^{Nn} \to \R^{Nn}$ satisfies~\eqref{assumption:A} and $F \in L^p_{\loc}(\Omega_T,\R^{Nn})$. We identify a measurable map $u \colon \Omega_T \to \R^N$ in the class
$$
u \in C\big((0,T);L_{\mathrm{loc}}^{q+1}(\Omega,\R^N)\big) \cap L^p_{\mathrm{loc}}\big(0,T; W^{1,p}_{\mathrm{loc}}(\Omega,\R^N)\big)
$$
as a weak solution to~\eqref{eq:dne} if and only if
$$
\iint_{\Omega_T}  |u|^{q-1}u\cdot \partial_t \varphi - \A (x,t,u,Du) \cdot D \varphi  \, \d x \d t = \iint_{\Omega_T} |F|^{p-2}F \cdot D \varphi \, \d x \d t
$$
for every $\varphi \in C_0^\infty(\Omega_T, \R^N)$.
\end{definition}

Our main result is that the spatial gradient $Du$ of a weak solution to \eqref{eq:dne} is locally integrable to a higher exponent than assumed a priori, provided that $F$ is locally integrable to some exponent $\sigma>p$.
The precise result is the following.

\begin{theorem} \label{thm:main}
Let $1 < q < \max \big\{\frac{n+2}{n-2}, \frac{2p}{n}+1 \big\}$, $p > \frac{n(q+1)}{n+q+1}$, $\sigma > p$ and $F \in L^\sigma_{\loc}(\Omega_T; \R^{Nn})$. Then, there exists $\eps_o = \eps_o(n,p,q,C_o,C_1) \in (0,1]$ such that whenever $u$ is a weak solution to~\eqref{eq:dne} in the sense of Definition~\ref{def:weak-sol}, there holds
$$
Du \in L^{p(1+\eps_1)}_{\loc} (\Omega_T; \R^{Nn}),
$$
in which $\eps_1 = \min \big\{ \eps_o, \frac{\sigma}{p} - 1 \big\}$. Furthermore, there exists $c = c(n,p,q,C_o,C_1) \geq 1$ such that for every $\eps \in (0,\eps_1]$ and $Q_\rho = B_{\rho}(x_o) \times (t_o - \rho^{q+1}, t_o + \rho^{q+1}) \Subset \Omega_T$ the estimate
\begin{align*}
	\biint_{Q_{\frac12 \rho}} |Du|^{p(1+\eps)} \, \d x \d t
	&\leq
	c \left( 1 + \biint_{Q_\rho} \frac{|u|^{p^\sharp}}{\rho^{p^\sharp}} + |F|^p \, \d x \d t \right)^{\eps d}  \biint_{Q_\rho} |Du|^p \, \d x \d t \\
	&\phantom{=}+ c \biint_{Q_\rho} |F|^{p(1+\eps)} \, \d x \d t
\end{align*}
holds true, where $p^\sharp = \max \{p ,q+1 \}$ and
\begin{equation}\label{def-d}
 d= \begin{cases} 
      \frac{p}{q+1} &\text{if } p \geq q+1, \\[5pt]
      \frac{p(q+1)}{p(q+1) +n(p-q-1)} &\text{if } \frac{n(q+1)}{n+q+1} < p < q+1 .
   \end{cases}
 \end{equation}
\end{theorem}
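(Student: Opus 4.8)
The plan is to follow the Gehring-type strategy adapted to the doubly nonlinear, doubly intrinsic setting. The backbone is a reverse Hölder inequality on intrinsic cylinders, combined with a stopping-time/Vitali covering argument and a careful interpolation to absorb the "wrong" integrability exponents arising from the nonlinear time term $\partial_t(|u|^{q-1}u)$ and the Sobolev embedding. I would first fix notation for the intrinsic cylinders: given a point $z_o=(x_o,t_o)$ and scales, one works with cylinders of the form $Q^{(\lambda,\theta)}_\rho(z_o)=B_{\lambda^{\beta}\rho}(x_o)\times(t_o-\theta\rho^{q+1},t_o+\theta\rho^{q+1})$ where the scaling parameters $\lambda$ (for $|Du|$) and $\theta$ (built from $|u|$) are chosen so that the mean value of $|Du|^p$ on the cylinder is comparable to $\lambda^p$ and the mean value of a suitable power of $|u|$ is comparable to $\theta$. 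The first real step is to establish a \emph{reverse Hölder inequality}: on such an intrinsic cylinder, $\biint_{Q_{\rho/2}}|Du|^p$ is controlled by $\big(\biint_{Q_\rho}|Du|^{p\vartheta}\big)^{1/\vartheta}$ for some $\vartheta\in(0,1)$, plus the $F$-term. This in turn rests on an energy (Caccioppoli) estimate for \eqref{eq:dne} tested with $\eta^p(u-u_o|^{q-1}u_o\cdots)$-type test functions, together with a Sobolev–Poincaré inequality on the intrinsic cylinder; here the exponent $p^\sharp=\max\{p,q+1\}$ and the parameter $d$ enter precisely because one must trade the parabolic energy norm of $u$ against its spatial $L^p$-norm using the parabolic embedding, and the admissible trade depends on whether $p\ge q+1$ (slow diffusion, where the natural time scale $\rho^{q+1}$ dominates) or $p<q+1$ (fast diffusion, where a deficit must be compensated, giving the more complicated $d$).

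Next I would set up the \emph{stopping-time argument}. Fix the reference cylinder $Q_\rho=Q_\rho(z_o)$ and a pair of concentric sub-cylinders $Q_{r_1}\Subset Q_{r_2}\Subset Q_\rho$ with $\tfrac12\rho\le r_1<r_2\le\rho$. For a level $\lambda$ exceeding a base level $\lambda_o$ (comparable to the right-hand side $1+\biint_{Q_\rho}\rho^{-p^\sharp}|u|^{p^\sharp}+|F|^p$ raised to the appropriate power $d$, so that $\lambda_o^{\text{something}}\gtrsim\biint_{Q_\rho}|Du|^p$), one considers the super-level set $\{|Du|>\lambda\}$ and, around each of its Lebesgue points, selects a maximal intrinsic cylinder on which the intrinsic coupling $\biint|Du|^p\sim\lambda^p$ holds. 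The construction must be done so that the intrinsic cylinders are nested inside $Q_{r_2}$, which forces the standard smallness condition $\lambda\ge\lambda_o$ where $\lambda_o$ carries the $(r_2-r_1)^{-\text{(dimensional exponent)}}$ blow-up. A Vitali covering then produces a countable disjoint family; on each selected cylinder the reverse Hölder inequality from the previous step applies.

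The third step is the standard \emph{measure-theoretic iteration}: one multiplies the reverse Hölder estimate by $\lambda^{p\eps}$, sums over the Vitali family, integrates in $\lambda$ from $\lambda_o$ to $\infty$ (Fubini), and uses Fubini again on the left to recover $\biint_{\{|Du|>\lambda_o\}}|Du|^{p(1+\eps)}$. One obtains an inequality of the form $\Phi(r_1)\le\tfrac12\Phi(r_2)+(\text{good terms})$ where $\Phi(r)=\biint_{Q_r}|Du|^{p(1+\eps)}$, provided $\eps\le\eps_o$ is chosen small enough — $\eps_o$ depending only on $n,p,q,C_o,C_1$ via the exponent $\vartheta<1$ in the reverse Hölder inequality and the interpolation constants. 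The factor $\tfrac12$ requires $\eps$ small; the term involving $\lambda_o$ produces exactly the factor $\big(1+\biint_{Q_\rho}\rho^{-p^\sharp}|u|^{p^\sharp}+|F|^p\big)^{\eps d}$ times $\biint_{Q_\rho}|Du|^p$, while the $F$-contributions assemble into $c\biint_{Q_\rho}|F|^{p(1+\eps)}$. A standard iteration lemma (Giaquinta–Giusti type) on the family $\{\Phi(r)\}$ over $\tfrac12\rho\le r\le\rho$ then removes the self-improving term and yields the claimed estimate on $Q_{\rho/2}$; the restriction $\eps_1=\min\{\eps_o,\sigma/p-1\}$ is forced because the $F$-term $|F|^{p(1+\eps)}$ must be locally integrable.

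The main obstacle — and the genuinely new difficulty compared with the $q=1$ or $p=2$ cases — is step one: constructing an intrinsic geometry that \emph{simultaneously} respects the two nonlinearities. One must choose the spatial and temporal scaling exponents (the $\beta$ in $B_{\lambda^\beta\rho}$ and the power of $|u|$ defining $\theta$) so that the intrinsic cylinders are stable under the stopping-time selection, so that a Sobolev–Poincaré inequality holds on them with a uniform constant, and so that the gluing lemma for the time term (controlling $\iint|u|^{q-1}u\cdot\partial_t(\cdots)$ via the boundary term $\sup_t\int|u(t)-u_o|\cdots$) closes. Getting the exponent $d$ in \eqref{def-d} right, with the case split at $p=q+1$ and the delicate denominator $p(q+1)+n(p-q-1)$ in the fast-diffusion regime (which is positive exactly when $p>\frac{n(q+1)}{n+q+1}$), is where almost all the work lies; everything downstream is then a careful but routine adaptation of the Gehring machinery.
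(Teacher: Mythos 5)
Your overall strategy coincides with the paper's: a Caccioppoli estimate plus Sobolev--Poincar\'e inequalities on doubly intrinsic cylinders yield a reverse H\"older inequality, which is then fed into a stopping-time, Vitali-covering and Fubini argument; the exponent $d$ and the base level $\lambda_o$ arise essentially as you describe. (A minor point of normalisation: in the paper the spatial radius is scaled by $\theta^{\frac{1-q}{1+q}}$ and the time length by $\lambda^{2-p}$, i.e.\ the roles of the two parameters are the reverse of what you wrote.)

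The one genuine gap is your assumption that the cylinders can be chosen so that the mean value of the relevant power of $|u|$ is \emph{comparable} to the corresponding power of $\theta$. Because the map $\rho\mapsto\tilde\theta_\rho$ produced by the natural definition is not monotone in $\rho$, the paper must pass to $\theta_\rho=\max_{r\in[\rho,R_o]}\tilde\theta_r$ (a ``sunrise'' construction) so that the Vitali covering and stopping-time arguments work, and after this step only the upper bound (the $\theta$-subintrinsic half of \eqref{eq:theta-intrinsic}) survives in general. The paper therefore has to show that the stopped cylinders satisfy either the full $\theta$-intrinsic coupling or the alternative $\theta$-singular coupling $\theta\lesssim\lambda$ of \eqref{eq:theta-degenerate-1} (this is the role of Lemma~\ref{lem:theta-absorb}), and every Sobolev--Poincar\'e lemma must be proved separately in both regimes. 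This dichotomy, which your proposal does not anticipate, accounts for a large part of Sections~\ref{sec:sobolev-fast-diffusion}--\ref{sec:reverse-Holder} and of Section~\ref{sec:reverse-Holder-final-section}; without it the reverse H\"older inequality is simply not available on the cylinders that the covering argument actually produces. Relatedly, the hypothesis $q<\max\{\frac{n+2}{n-2},\frac{2p}{n}+1\}$ is needed to guarantee $2p^\sharp+n(1-q)>0$, which makes the parameter $\tilde\theta_\rho$ well defined in the first place; your sketch does not indicate where this restriction would enter.
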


At this stage, some remarks on the history of the problem are in order.
The study of higher integrability was started by Elcrat and Meyers \cite{Meyers-Elcrat}, who gave a result for nonlinear elliptic systems.
Key ingredients of their proof are a Caccioppoli type inequality and the resulting reverse Hölder inequality, and a version of Gehring's lemma.
The latter was originally used in the context of higher integrability for the Jacobian of quasi-conformal mappings in \cite{Gehring}.
For more information, we refer to the monographs \cite[Chapter 5, Theorem 1.2]{Giaquinta:book} and \cite[Theorem 6.7]{Giusti:book}.
The first higher integrability result for parabolic systems is due to Giaquinta and Struwe \cite{Giaquinta-Struwe}, who were able to treat systems of quadratic growth.
However, their technique does not apply to systems of parabolic $p$-Laplace type with general $p\neq 2$.
For $p>\frac{2n}{n+2}$, the breakthrough was achieved by Kinnunen and Lewis \cite{Kinnunen-Lewis:1} (see also \cite{Kinnunen-Lewis:very-weak}), whose key idea was to use a suitable intrinsic geometry.
More precisely, they considered cylinders of the form $Q_{\varrho,\lambda^{2-p}\varrho^2} := B_\varrho(x_o) \times (t_o-\lambda^{2-p}\varrho^2, t_o+\lambda^{2-p}\varrho^2)$, where the length of the cylinder depends on the integral average of $|Du|^p$,
$$
	\lambda^p \approx \biint_{Q_{\varrho,\lambda^{2-p}\varrho^2}} |Du|^p \,\dx\dt.
$$
The concept of intrinsic cylinders has originally been introduced by DiBenedetto and Friedman \cite{DbF2} in connection with Hölder continuity of solutions; see also the monographs \cite{DiBe,Urbano}.
Further, note that the lower bound on $p$ in \cite{Kinnunen-Lewis:1} appears naturally in different areas of parabolic regularity theory \cite{DiBe}.
In the meantime, \cite{Kinnunen-Lewis:1} has been generalized in several directions, including higher integrability results up to the parabolic boundary \cite{Boegelein-Parviainen,Parviainen,Parviainen-singular}, and results for higher order parabolic systems with $p$-growth \cite{Boegelein:1},
systems with $p(x,t)$-growth \cite{Boegelein-Duzaar}, and most recently parabolic double phase systems \cite{Kim-deg,Kim-sing}.

Despite this progress, higher integrability for the porous medium equation remained open for almost 20 years, since its nonlinearity concerns $u$ itself instead of its spatial gradient and is therefore significantly harder to deal with.
Then, Gianazza and Schwarzacher \cite{Gianazza-Schwarzacher} succeeded to prove the desired result for non-negative solutions to the degenerate porous medium equation by using intrinsic cylinders that depend on $u$ rather than $Du$.
The method in \cite{Gianazza-Schwarzacher} relies on the expansion of positivity.
Since this tool is only available for non-negative solutions, the approach does not carry over to sign-changing solutions or systems of porous medium type.
The case of systems was treated later by Bögelein, Duzaar, Korte and Scheven \cite{Boegelein-Duzaar-Korte-Scheven} for the transformed version of \eqref{eq:dne}
$$
	\partial_t u - \Div \A (x,t,u,D(|u|^{m-1}u)) = \Div F,
$$
where $m = \frac{1}{q} >0$, by using a different intrinsic geometry that also depends on $u$ itself.
Further, their proof of a reverse Hölder inequality is based on an energy estimate and the so-called gluing lemma, but avoids expansion of positivity.
Global higher integrability for degenerate porous medium type systems can be found in \cite{Moring-pme-global}.
For a local result concerning non-negative solutions in the supercritical singular range $\frac{(n-2)_+}{n+2}<m<1$, we refer to the paper \cite{Gianazza-Schwarzacher-singular} by Gianazza and Schwarzacher, and for sign-changing or vector-valued solutions to the article \cite{Boegelein-Duzaar-Scheven:2018} by Bögelein, Duzaar and Scheven.
Analogous to the observation for the singular parabolic $p$-Laplacian above, note that the lower bound $\frac{(n-2)_+}{n+2}$ is natural in the regularity theory for the fast diffusion equation, see \cite[Section 6.21]{DBGV-book}.

\begin{figure}
  \begin{tikzpicture}(160,155)(0,0)
	\draw[->,thick] (0,0) -- (7,0);
	\draw[->,thick] (0,0) -- (0,5.5);
	\draw (-0.05,1.5) -- (0.05,1.5);
	\draw (1.5,-0.05) -- (1.5,0.05);
	\draw (3,-0.05) -- (3,0.05);
	\draw (2,-0.05) -- (2,0.05);
	\draw (-0.2,1.5) node[anchor=mid] {$1$};
	\draw (1.5,-0.3) node[anchor=mid] {$1$};
	\draw (-0.4,4) node[anchor=mid] {$\frac{n+2}{n-2}$};
	\draw (3,-0.3) node[anchor=mid] {$2$};
	\draw (2,-0.3) node[anchor=mid] {$\frac{2n}{n+2}$};
	\draw (7,-0.3) node[anchor=mid] {$p$};
        \draw (5.9,5.2) node[anchor=mid] {\scriptsize $p=q+1$};
        \draw (-0.2,5.5) node[anchor=mid] {$q$};
    	\draw (2,2.8) node[anchor=mid] {\scriptsize $p=\frac{n(q+1)}{n+q+1}$};
	\draw (1.5,0) -- (7,5.5);
	\draw (0,1.5) -- (7,1.5);
	\draw (3,0) -- (3,5.5);
	\draw (2,0) -- (2,1.5);
        \draw [thick,dashdotted,black!30!red](2,1.5).. controls (2.5,2) and (3,3)
        .. (3,4);
        \draw [densely dotted] (0,4)--(3,4);
        \draw [thick](3,4)--(5.5,4);
        \draw [thick,dashed,black!30!green](5.5,4)--(7,4.6);
        \fill [opacity=0.5,red] (2,1.5).. controls (2.5,2) and (3,3)
        .. (3,4) -- (3,1.5);
        \fill [opacity=0.5,blue] (3,1.5)--(3,4)--(5.5,4);
        \fill [opacity=0.5,green] (3,1.5)--(5.5,4)--(7,4.6)--(7,1.5);
	\fill [opacity=0.5,lightgray] (3,0) -- (7,0) -- (7,1.5) -- (3,1.5);
	\fill [opacity=0.5,lightgray] (2,0) -- (3,0) -- (3,1.5) -- (2,0.5);
	\fill [opacity=0.5,lightgray] (3,1.5) -- (2,0.5) -- (2,1.5);
        \draw (6.35,4) node[anchor=mid] {\scriptsize
          $q=\frac{2p+n}{n}$};
\end{tikzpicture}
\caption{The red, blue and green areas are the ranges of $p$ and $q$ covered by Theorem \ref{thm:main}.}\label{figure}
\end{figure}
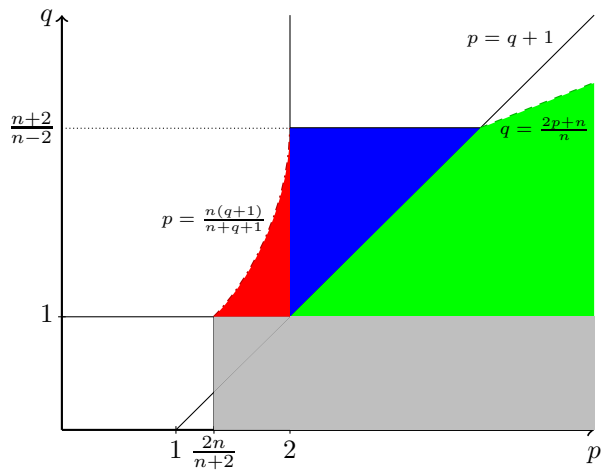

As a next step, Bögelein, Duzaar, Kinnunen and Scheven \cite{BDKS:2018} proved local higher integrability for the system \eqref{eq:dne} in the homogeneous case $p=q+1$.
To this end, they developed a new, elaborate intrinsic geometry that depends on both $u$ and $Du$, thus reflecting the doubly nonlinear behavior of the system.
The range $\max\big\{1,\frac{2n}{n+2}\big\} <p< \frac{2n}{(n-2)_+}$ of their main result seems unexpected first; however, the lower bound is the natural one for the parabolic $p$-Laplacian, while the upper bound is the same as for the singular porous medium system (note that it can be expressed as $q = p-1 < \frac{n+2}{(n-2)_+}$).
For $N=1$, non-negative solutions and $F \equiv 0$, Saari and Schwarzacher \cite{Saari-Schwarzacher} were able to remove the upper bound for all dimensions $n \in \N$.
Finally, the range $0<q<1$ and $\frac{2n}{n+2} < p$ of \eqref{eq:dne},
i.e.~the degenerate case with respect to $u$, has been dealt with by
Bögelein, Duzaar and Scheven in
\cite{Boegelein-Duzaar-Scheven:2022}.
The range covered by
\cite{Boegelein-Duzaar-Scheven:2022} corresponds to the gray area in
Figure \ref{figure}.

The goal of the present paper is to treat the singular range $q>1$ and thus close the gap in the higher integrability theory for \eqref{eq:dne}.
The overall strategy is similar to the one in \cite{Boegelein-Duzaar-Scheven:2022}.
However, there is a crucial difference in the chosen intrinsic geometry.
While scaling in the time variable is appropriate in the degenerate
case, the technique seems to require a different scaling in the
singular case. Thus, we work with a scaling both in the spatial and time variables.
Namely, throughout the article we consider cylinders of the form
$$
	Q_\varrho^{(\lambda,\theta)}(x_o,t_o)
	:=
	B_{\theta^\frac{1-q}{1+q} \varrho}(x_o)
	\times (t_o - \lambda^{2-p} \varrho^{1+q}, t_o + \lambda^{2-p} \varrho^{1+q})
$$
with positive factors $\lambda,\theta$ and $(x_o,t_o) \in \Omega_T$.
We collect technical lemmas, energy estimates and the gluing lemma for such cylinders in Section~\ref{sec:preliminaries}.
In particular, the latter two have already been proven in \cite{Boegelein-Duzaar-Scheven:2022} for all $p>1$ and $q>0$.
Now, the idea is to select $\lambda$ and $\theta$ such that
\begin{equation}
	\lambda^p \approx \biint_{Q_\varrho^{(\lambda,\theta)}} |Du|^p + |F|^p \,\dx\dt
	\quad \text{and} \quad
	\theta^{p^\sharp} \approx
        \biint_{Q_\varrho^{(\lambda,\theta)}}
        \frac{|u|^{p^\sharp}}{\big(\theta^{\frac{1-q}{1+q}}\varrho\big)^{p^\sharp}} \,\dx\d t
       	\label{eq:idea-scaling}
\end{equation}
in order to obtain intrinsic cylinders.
However, due to some complications related to their construction, we
also need to take so-called $\theta$-subintrinsic cylinders into
account, where only the inequality "$\gtrsim$" is satisfied in
\eqref{eq:idea-scaling}$_2$.
More precisely, we can construct cylinders in such a way that they are
either $\theta$-intrinsic in the sense of~\eqref{eq:idea-scaling}$_2$
or that they are $\theta$-subintrinsic and satisfy
$\theta\lesssim\lambda$, see \eqref{eq:theta-degenerate-1}.
We call the latter case $\theta$-singular because it means that $u$ is in a certain
sense small compared to its oscillation, and the differential equation
becomes singular if $|u|$ becomes small.
In both cases, sophisticated arguments are necessary to prove parabolic Sobolev-Poincaré type inequalities for all relevant cylinders.
This is done in the regime $\frac{n(q+1)}{n+q+1} < p \leq q+1$ in Section~\ref{sec:sobolev-fast-diffusion} and in the range $2 < q+1 < p$ in Section~\ref{sec:sobolev-slow-diffusion}.
Reverse Hölder inequalities in the same types of cylinders are shown for the whole range $q>1$ and $\frac{n(q+1)}{n+q+1} < p$ in Section~\ref{sec:reverse-Holder}.
The lower bound on $p$ appearing in the proof of these vital tools and thus restricting the red area of admissible parameters in Figure~\ref{figure} is natural in the regularity theory of the doubly nonlinear equation \eqref{eq:prototype}.
Finally, the proof of Theorem~\ref{thm:main} can be found in Section~\ref{sec:proof-main-thm}.
To this end, we start with a given non-intrinsic cylinder $Q_{2R} \Subset \Omega_T$ and first focus on the second relation in \eqref{eq:idea-scaling} in Section~\ref{subsec:cylinder-construction}.
This is the step where, in the case $n\ge3$, the conditions $q<\frac{n+2}{n-2}$ for $p<q+1$ and $q<\frac{2p}{n}+1$ for $p>q+1$ restricting the blue and green parameter areas in Figure~\ref{figure} come into play.
These conditions are consistent with the bounds $q <\frac{n+2}{n-2}$
for the singular porous medium system in
\cite{Boegelein-Duzaar-Scheven:2018} and $q+1=p<\frac{2n}{n-2}$ for
the homogeneous doubly nonlinear system in \cite{BDKS:2018}.
Even in
the latter special case, it remains an interesting open problem to
remove this condition in the case of systems.

Ideally, we would like to choose $\theta$ in dependence on given parameters $\lambda$ and $\varrho$ such that $\varrho \mapsto \theta$ (with fixed $\lambda$) is non-increasing and that $Q_\varrho^{(\lambda,\theta)}\subset Q_{2R}$ satisfies \eqref{eq:idea-scaling}$_2$.
The reason that it is only possible to obtain $\theta$-subintrinsic cylinders is the so-called sunrise construction that is used to ensure the monotonicity of $\varrho \mapsto \theta$.
Next, we prove a Vitali-type covering property for the relevant cylinders in Section~\ref{sec:Vitali-covering}.
In Section~\ref{sec:stopping-time}, for given $\lambda$ we use a stopping time argument to fix the radius of our (sub)-intrinsic cylinders (and thus the parameter $\theta$ according to the first step) such that also the first relation in \eqref{eq:idea-scaling} is satisfied.
Applying the results of Section~\ref{sec:reverse-Holder}, we show that a suitable reverse Hölder inequality holds in Section~\ref{sec:reverse-Holder-final-section}.
Finally, we sketch standard arguments that finish the proof in Section~\ref{sec:final-argument}.

\medskip

\noindent
{\bf Acknowledgments.} K.~Moring has been supported by the Magnus Ehrnrooth Foundation.
L.~Schätzler was partly supported by the FWF-Project P31956-N32 ``Doubly
nonlinear evolution equation''.
Further, she would like to express her gratitude to the Faculty of Mathematics of the University of Duisburg-Essen for the hospitality during her visit.

 \section{Preliminaries}
\label{sec:preliminaries}

We write $z_o = (x_o,t_o) \in \R^n \times \R$ and use space-time cylinders of the form
$$
Q_\rho^{(\lambda, \theta)}(z_o) = B_\rho^{(\theta)}(x_o) \times \Lambda_\rho^{(\lambda)}(t_o),
$$
where
$$
B_\rho^{(\theta)}(x_o) = \left\{ x\in \R^n : |x-x_o| < \theta^\frac{1-q}{1+q} \rho \right\},
$$
and
$$
\Lambda^{(\lambda)}_\rho (t_o) = \left(t_o - \lambda^{2-p} \rho^{1+q}, t_o + \lambda^{2-p} \rho^{1+q} \right),
$$
with parameters $\theta, \lambda > 0$. If $\lambda = \theta = 1$, we use the simpler notation
$$
	Q_\varrho(z_o) := Q_\varrho^{(1,1)}(z_o).
$$
For the mean value of a function $u\in L^1(Q)$ over a cylinder
$Q=B\times\Lambda\subset\R^{n}\times\R$ of finite positive measure, we write 
\begin{equation*}
  (u)_Q
  :=
  \biint_{Q}u\,\d x\d t
\end{equation*}
and similarly,
\begin{equation*}
  (u)_B(t)
  :=
  \bint_{B}u(\cdot,t)\,\d x
\end{equation*}
for the slice-wise means, provided $u(\cdot,t)\in L^1(B)$. In the particular cases
$Q=Q_\rho^{(\lambda, \theta)}(z_o)$ and $B=B_\rho^{(\theta)}(x_o)$, we
also write 
\begin{equation*}
  (u)_{z_o;\rho}^{(\lambda, \theta)}
  :=
  (u)_{\rho}^{(\lambda, \theta)}
  :=
  (u)_Q
  \qquad\mbox{and}\qquad 
  (u)_{x_o;\rho}^{(\theta)}(t)
  :=
  (u)_{\rho}^{(\theta)}(t)
  :=
  (u)_B(t).
\end{equation*}
For the power of a vector $u\in\R^N$ to an exponent $\alpha>0$, we write
\begin{equation*}
  \u^\alpha:=|u|^{\alpha-1}u,
\end{equation*}
where we interpret the right-hand side as zero if $u=0$.

Next we state a useful iteration lemma that can be obtained by a change of variables in \cite[Lemma 6.1]{Giusti:book}.
\begin{lemma}
\label{lem:iteration} 
Let $0<\vartheta<1$, $A,C\geq 0$ and $\alpha,\beta>0$. Then, there
exists a constant $c=c(\alpha,\beta,\vartheta)$ such that there holds: For any $0<r<\rho$ and any nonnegative bounded function $\phi\colon [r,\rho] \to \R_{\geq 0}$ satisfying
$$
\phi(t) \leq \vartheta \phi(s) +A(s^\alpha-t^\alpha)^{-\beta}+C \quad \text{ for all } r\leq t <s \leq \rho,
$$
we have
$$
\phi(r) \leq c \left[A(\rho^\alpha-r^\alpha)^{-\beta}+C \right].
$$
\end{lemma}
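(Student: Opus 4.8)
The plan is to reduce the assertion to the classical ``hole-filling'' iteration lemma of Giusti, \cite[Lemma 6.1]{Giusti:book}, which is exactly the special case $\alpha=1$, by means of the monotone substitution $t\mapsto t^\alpha$. Since $\alpha>0$ and $0<r<\rho$, the map $\tau\mapsto\tau^{1/\alpha}$ is an increasing bijection of $[r^\alpha,\rho^\alpha]$ onto $[r,\rho]$, so
\[
  \psi(\tau):=\phi\big(\tau^{1/\alpha}\big),\qquad \tau\in[r^\alpha,\rho^\alpha],
\]
is again nonnegative and bounded. First I would check that $\psi$ inherits the structural inequality in the variable $\tau$: for $r^\alpha\le\tau<s\le\rho^\alpha$, setting $t:=\tau^{1/\alpha}<s^{1/\alpha}=:\sigma$ and applying the hypothesis to the admissible pair $(t,\sigma)$ yields
\[
  \psi(\tau)=\phi(t)\le\vartheta\,\phi(\sigma)+A\big(\sigma^\alpha-t^\alpha\big)^{-\beta}+C
  =\vartheta\,\psi(s)+A\,(s-\tau)^{-\beta}+C ,
\]
because $\sigma^\alpha-t^\alpha=s-\tau$. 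Thus $\psi$ satisfies the hypothesis of \cite[Lemma 6.1]{Giusti:book} on $[r^\alpha,\rho^\alpha]$, which gives $\psi(r^\alpha)\le c(\beta,\vartheta)\big[A(\rho^\alpha-r^\alpha)^{-\beta}+C\big]$; translating back via $\psi(r^\alpha)=\phi(r)$ is precisely the claim.

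For completeness I would also record the short direct argument underlying the base case, since it is what makes the constant explicit. The key point is that the factor $\vartheta$ in front of $\phi$ is strictly below $1$, so one may pick $\tau_0\in(0,1)$ with $\vartheta\,\tau_0^{-\beta}<1$ and define an increasing sequence $r=r_0<r_1<\cdots$ by prescribing $r_{i+1}^\alpha-r_i^\alpha=(1-\tau_0)\,\tau_0^{\,i}\,(\rho^\alpha-r^\alpha)$; telescoping gives $r_k^\alpha=\rho^\alpha-\tau_0^{\,k}(\rho^\alpha-r^\alpha)$, so $r_k\in[r,\rho]$ and $r_k\uparrow\rho$. Applying the hypothesis to the pairs $(r_i,r_{i+1})$ and iterating,
\begin{align*}
  \phi(r)
  &\le
  \vartheta^k\phi(r_k)+\sum_{i=0}^{k-1}\vartheta^i\Big[A\big(r_{i+1}^\alpha-r_i^\alpha\big)^{-\beta}+C\Big]\\
  &\le
  \vartheta^k\phi(r_k)
  +\frac{A}{(1-\tau_0)^\beta(\rho^\alpha-r^\alpha)^\beta}\sum_{i=0}^{\infty}\big(\vartheta\,\tau_0^{-\beta}\big)^i
  +\frac{C}{1-\vartheta}.
\end{align*}
Since $\phi$ is bounded and $\vartheta<1$, the term $\vartheta^k\phi(r_k)$ vanishes as $k\to\infty$ and both remaining series converge, so letting $k\to\infty$ yields the stated estimate with $c=c(\alpha,\beta,\vartheta)$.

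The argument is entirely elementary, so there is no genuine obstacle; if anything, the only point requiring a little attention is the choice of the ratio $\tau_0$ ensuring convergence of the geometric series $\sum_i(\vartheta\,\tau_0^{-\beta})^i$ — this is exactly where the hypothesis $\vartheta<1$ is indispensable — together with the standard observation that a hole-filling inequality can be iterated at all precisely because the prefactor of $\phi$ is a genuine contraction, which makes boundedness of $\phi$ enough to discard the leftover term in the limit.
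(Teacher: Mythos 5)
Your proposal is correct and matches the paper's intended argument: the paper proves this lemma precisely by the substitution $t\mapsto t^\alpha$ reducing to \cite[Lemma 6.1]{Giusti:book}, which is your first paragraph. Your additional direct iteration with the geometric partition $r_{i+1}^\alpha-r_i^\alpha=(1-\tau_0)\tau_0^i(\rho^\alpha-r^\alpha)$ is just the standard proof of the cited base lemma and is also correct.
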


Using the arguments of \cite[Lemma 8.3]{Giusti:book}, the following lemma can be deduced.
\begin{lemma}\label{lem:Acerbi-Fusco}
For every $\alpha>0$, there exists a constant $c=c(\alpha)$ such that,
for all $a,b\in\R^N$, $N\in\N$, we have
\begin{align*}
	\tfrac1c\big|\power{b}{\alpha} - \power{a}{\alpha}\big|
	\le
	\big(|a| + |b|\big)^{\alpha-1}|b-a|
	\le
	c \big|\power{b}{\alpha} - \power{a}{\alpha}\big|.
\end{align*}
\end{lemma}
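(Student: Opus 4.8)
The statement is purely algebraic and has nothing to do with the parabolic structure, so the plan is to prove it directly by integrating the map $\xi\mapsto\power{\xi}{\alpha}=|\xi|^{\alpha-1}\xi$ along the segment joining $a$ and $b$. Assuming $a\neq b$ (the case $a=b$ being trivial, since all three terms vanish), I would set $\gamma_t:=a+t(b-a)$ for $t\in[0,1]$ and record the representation
\begin{equation*}
	\power{b}{\alpha}-\power{a}{\alpha}
	=\int_0^1\tfrac{\d}{\d t}\big(|\gamma_t|^{\alpha-1}\gamma_t\big)\,\d t
	=\int_0^1|\gamma_t|^{\alpha-1}\,M(\gamma_t)(b-a)\,\d t,
	\qquad
	M(\xi):=\mathrm{Id}+(\alpha-1)\tfrac{\xi\otimes\xi}{|\xi|^2}.
\end{equation*}
The matrix $M(\xi)$ is symmetric with eigenvalues $\alpha$ (once) and $1$ ($N-1$ times), so $\min\{1,\alpha\}|\eta|^2\le M(\xi)\eta\cdot\eta\le\max\{1,\alpha\}|\eta|^2$ for every $\eta$. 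Pairing the identity with the unit vector $\tfrac{b-a}{\abs{b-a}}$ gives a lower bound and taking norms an upper bound, which together yield
\begin{equation*}
	\min\{1,\alpha\}\,\abs{b-a}\,J
	\le\babs{\power{b}{\alpha}-\power{a}{\alpha}}
	\le\max\{1,\alpha\}\,\abs{b-a}\,J,
	\qquad J:=\int_0^1|\gamma_t|^{\alpha-1}\,\d t .
\end{equation*}
Thus the lemma is reduced to showing that $J$ is comparable to $(\abs a+\abs b)^{\alpha-1}$ with constants depending only on $\alpha$; here one uses that $J$ is symmetric in $a,b$ (substitute $t\mapsto 1-t$) and, like $(\abs a+\abs b)^{\alpha-1}$, positively homogeneous of degree $\alpha-1$ in $(a,b)$.

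For the lower bound I would assume $\abs a\le\abs b$, so that $\abs b\ge\tfrac12(\abs a+\abs b)$; on $t\in[\tfrac34,1]$ one has $(2t-1)\abs b\le|\gamma_t|\le\abs a+\abs b$, hence $\tfrac14(\abs a+\abs b)\le|\gamma_t|\le\abs a+\abs b$, and distinguishing $\alpha\ge1$ from $0<\alpha<1$ (monotonicity of $r\mapsto r^{\alpha-1}$) gives $J\ge c(\alpha)(\abs a+\abs b)^{\alpha-1}$. For the upper bound the case $\alpha\ge1$ is immediate from $|\gamma_t|\le\abs a+\abs b$. In the case $0<\alpha<1$ I would use homogeneity to normalise $\abs a+\abs b=1$, assume $\abs a\le\abs b$, write $s:=\abs a\le\tfrac12$, and then use $|\gamma_t|\ge\big|\,t\abs b-(1-t)\abs a\,\big|=\abs{t-s}$ to estimate
\begin{equation*}
	J\le\int_0^1\abs{t-s}^{\alpha-1}\,\d t=\frac{s^\alpha+(1-s)^\alpha}{\alpha}\le\frac2\alpha .
\end{equation*}
Combining the displays gives the asserted two-sided bound with $c=c(\alpha)$, independent of $N$ since the eigenvalue bounds on $M$ are dimension-free. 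This is, up to cosmetic changes, the computation underlying \cite[Lemma~8.3]{Giusti:book}.

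The main obstacle will be the singular range $0<\alpha<1$. There $\xi\mapsto\power{\xi}{\alpha}$ is no longer differentiable at the origin, and $|\gamma_t|^{\alpha-1}$ develops a (still integrable, since $\alpha-1>-1$) singularity whenever the segment $[a,b]$ meets or comes close to $0$; I would have to check that the fundamental theorem of calculus is still applicable in that situation—absolute continuity of $t\mapsto|\gamma_t|^{\alpha-1}\gamma_t$ together with an $L^1$ derivative is enough—and that the upper bound for $J$ survives, which is exactly what the elementary inequality $|\gamma_t|\ge\big|\,t\abs b-(1-t)\abs a\,\big|$ secures. For $\alpha\ge1$ everything collapses to the crude bounds $|\gamma_t|\le\abs a+\abs b$ and $\abs{M(\xi)\eta}\le\alpha\abs\eta$, and no subtlety arises.
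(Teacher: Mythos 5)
Your argument is correct, and it is essentially the same one the paper relies on: the paper gives no proof but defers to \cite[Lemma 8.3]{Giusti:book}, whose content is exactly this integral representation of $\power{b}{\alpha}-\power{a}{\alpha}$ along the segment, the spectral bounds $\min\{1,\alpha\}\le M(\xi)\le\max\{1,\alpha\}$, and the comparison of $\int_0^1|\gamma_t|^{\alpha-1}\,\d t$ with $(|a|+|b|)^{\alpha-1}$. Your handling of the singular range $0<\alpha<1$ (absolute continuity with $L^1$ derivative, and the bound $|\gamma_t|\ge|t|b|-(1-t)|a||$ for the upper estimate of $J$) is the right way to close the only delicate point.
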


In the case $\alpha\ge1$, the preceding lemma immediately implies the
following elementary estimate.
\begin{lemma}\label{lem:a-b}
For every $\alpha\ge 1$, there exists a constant $c=c(\alpha)$ such that,
for all $a,b\in\R^N$, $N\in\N$, we have
\begin{align*}
	|b-a|^\alpha
    \le
    c\big|\power{b}{\alpha} - \power{a}{\alpha}\big|.
\end{align*}
\end{lemma}

For the proof of the following statement on the quasi-minimality of the mean value, we refer to \cite[Lemma 3.5]{BDKS:2018}.
\begin{lemma}
\label{lem:uavetoa}
Let $p \geq 1$ and $\alpha \geq \frac1p$. There exists a constant $c = c(\alpha,p)$ such that whenever $A \subset B \subset \R^k$, $k \in \N$ holds for bounded sets $A$ and $B$ of positive measure, then for every $u \in L^{\alpha p}(B,\R^N)$ and $a \in \R^N$ there holds 
\begin{align*}
\bint_B \babs{\u^{\alpha} - \lbbracket \u \rbbracket_{\bf{A}}^{\alpha}}^p\, \dx \leq \frac{c|B|}{|A|} \bint_B \babs{\u^{\alpha} - \a^{\alpha}}^p\, \dx.
\end{align*}
\end{lemma}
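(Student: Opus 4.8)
The plan is to exploit the quasi-minimality of the mean value with respect to the functional $a \mapsto \int_B |\u^\alpha - \a^\alpha|^p\,\dx$, combined with the comparison estimate in Lemma \ref{lem:Acerbi-Fusco}. First I would reduce the claim to a statement about the average of $\u^\alpha$ over $A$ being ``almost optimal'' in the following sense: by the triangle inequality and Lemma \ref{lem:a-b} applied with exponent $\alpha p \ge 1$ (note $\alpha \ge \tfrac1p$ guarantees $\alpha p \ge 1$, which is exactly why this hypothesis is needed), one has for any $a \in \R^N$
\begin{align*}
  \bint_B \babs{\u^\alpha - \lbbracket \u \rbbracket_{\bf A}^\alpha}^p \,\dx
  &\le
  c \bint_B \babs{\u^\alpha - \a^\alpha}^p \,\dx
  + c \babs{\a^\alpha - \lbbracket \u \rbbracket_{\bf A}^\alpha}^p .
\end{align*}
So it suffices to control the constant term $\babs{\a^\alpha - \lbbracket \u \rbbracket_{\bf A}^\alpha}^p$ by $\tfrac{c|B|}{|A|}\bint_B \babs{\u^\alpha - \a^\alpha}^p\,\dx$.

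For that term, I would use that $\lbbracket \u \rbbracket_{\bf A}^\alpha = \babs{(\u^\alpha)_A}^{1/\alpha - 1}(\u^\alpha)_A$ is a fixed power of the genuine average $(\u^\alpha)_A := \bint_A \u^\alpha\,\dx$. By Jensen's inequality and Lemma \ref{lem:Acerbi-Fusco} (the map $b \mapsto \power{b}{1/\alpha}$ and its inverse $b \mapsto \power{b}{\alpha}$ are comparable in the sense made precise there), the distance $\babs{\a^\alpha - \lbbracket \u \rbbracket_{\bf A}^\alpha}$ is controlled by a power of $\babs{(\u^\alpha)_A - \a^\alpha} = \babs{\bint_A (\u^\alpha - \a^\alpha)\,\dx}$, which by Jensen again and the inclusion $A \subset B$ is at most
\begin{align*}
  \bint_A \babs{\u^\alpha - \a^\alpha}\,\dx
  \le
  \frac{|B|}{|A|} \bint_B \babs{\u^\alpha - \a^\alpha}\,\dx
  \le
  \frac{|B|}{|A|} \left( \bint_B \babs{\u^\alpha - \a^\alpha}^p\,\dx \right)^{1/p}.
\end{align*}
Raising to the power $p$ absorbs the $|B|/|A|$ factor into $(|B|/|A|)^p \le (|B|/|A|)^p$; since $|B|/|A| \ge 1$ one can crudely bound $(|B|/|A|)^p$ by itself, but to match the stated linear dependence I would instead be slightly more careful and keep only one factor of $|B|/|A|$ outside, which is legitimate because the remaining $(|B|/|A|)^{p-1}$ can be absorbed after noting $\bint_B|\cdots|^p \le (|B|/|A|)\bint_A$ is not what we want — rather, the clean route is to apply Lemma \ref{lem:uavetoa}'s analogue directly from \cite[Lemma 3.5]{BDKS:2018}, whose chaining already produces the factor $|B|/|A|$ linearly.

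The main obstacle I anticipate is bookkeeping the powers of $|B|/|A|$ so that the final constant is linear in $|B|/|A|$ rather than $(|B|/|A|)^p$; the trick is that when comparing $\babs{\a^\alpha - \lbbracket\u\rbbracket_{\bf A}^\alpha}$ to $\babs{(\u^\alpha)_A - \a^\alpha}$ one works with the ambient quantities directly on $A$ (not passing through $B$), and only the single averaging step $\bint_A \le \tfrac{|B|}{|A|}\bint_B$ introduces the ratio. A secondary subtlety is justifying the comparison $\babs{\a^\alpha - \lbbracket\u\rbbracket_{\bf A}^\alpha} \le c\, g\big(\babs{(\u^\alpha)_A - \a^\alpha}\big)$ for a suitable power $g$ uniformly in the sizes of $|a|$ and $|(\u^\alpha)_A|$; this is precisely the content of Lemma \ref{lem:Acerbi-Fusco} with exponent $1/\alpha$, used together with Lemma \ref{lem:a-b} when $\alpha \ge 1$ and with Lemma \ref{lem:Acerbi-Fusco} alone when $\alpha < 1$, after which Hölder's inequality upgrades the $L^1$-average on $B$ to an $L^p$-average at the cost of the measure ratio. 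Since the statement is quoted verbatim from \cite[Lemma 3.5]{BDKS:2018}, I would in practice simply cite that reference and omit the computation, as the authors do.
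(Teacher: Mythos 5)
The paper gives no argument for this lemma at all---it simply refers to \cite[Lemma 3.5]{BDKS:2018}---so the comparison is with that (two-line) proof. Your opening and closing steps are the right ones: split off the constant $\babs{\a^\alpha-\lbbracket\u\rbbracket_{\bf A}^{\alpha}}^p$ by the triangle inequality, then trade an average over $A$ for an average over $B$ at the cost of a single factor $|B|/|A|$. But the middle of your plan rests on a misreading of the notation, and that is a genuine gap. By definition, $\lbbracket u\rbbracket_{\bf A}$ is the vector whose $\alpha$-th power is the mean value of $\u^\alpha$ over $A$, i.e.\ $\lbbracket\u\rbbracket_{\bf A}^{\alpha}=(\u^\alpha)_A=\bint_A\u^\alpha\,\dx$ \emph{identically}. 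The formula you write, $\lbbracket\u\rbbracket_{\bf A}^{\alpha}=|(\u^\alpha)_A|^{1/\alpha-1}(\u^\alpha)_A$, is the formula for $\lbbracket u\rbbracket_{\bf A}$ itself, not for its $\alpha$-th power; and if it \emph{were} the correct formula for $\lbbracket\u\rbbracket_{\bf A}^{\alpha}$, the asserted inequality would be false for $\alpha\neq 1$, since under $u\mapsto\lambda u$ the two sides would then scale with different powers of $\lambda$ and no constant $c(\alpha,p)$ could work. Consequently your entire detour through Lemma~\ref{lem:Acerbi-Fusco} (``the distance $\ldots$ is controlled by a power of $\babs{(\u^\alpha)_A-\a^\alpha}$'') compares a quantity with itself, and the anticipated difficulty of ``bookkeeping the powers of $|B|/|A|$'' does not exist.

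The correct argument is: for $p\ge1$, by convexity,
\begin{align*}
\bint_B\babs{\u^\alpha-\lbbracket\u\rbbracket_{\bf A}^{\alpha}}^p\,\dx
\le 2^{p-1}\bint_B\babs{\u^\alpha-\a^\alpha}^p\,\dx+2^{p-1}\babs{(\u^\alpha)_A-\a^\alpha}^p,
\end{align*}
and by Jensen's inequality on $A$ followed by the inclusion $A\subset B$,
\begin{align*}
\babs{(\u^\alpha)_A-\a^\alpha}^p
=\babs{\bint_A(\u^\alpha-\a^\alpha)\,\dx}^p
\le\bint_A\babs{\u^\alpha-\a^\alpha}^p\,\dx
\le\frac{|B|}{|A|}\bint_B\babs{\u^\alpha-\a^\alpha}^p\,\dx,
\end{align*}
which yields the claim with $c=2^{p-1}+2^{p-1}=2^p$ (linear in $|B|/|A|$ since $|B|/|A|\ge1$). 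The hypothesis $\alpha\ge\frac1p$ enters only to guarantee $\u^\alpha\in L^p(B)\subset L^1(A)$, so that the mean is defined and Jensen applies; your invocation of Lemma~\ref{lem:a-b} ``with exponent $\alpha p$'' in the first step is likewise not what is needed---the split is the elementary inequality $|x+y|^p\le2^{p-1}(|x|^p+|y|^p)$, not a power-function comparison.
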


Next, we recall the Gagliardo-Nirenberg inequality.
\begin{lemma} \label{lem:GN}
Let $1 \leq p,q, r < \infty$ and $\vartheta \in (0,1)$ such that $-\frac{n}{p} \leq \vartheta (1- \frac{n}{q}) - (1-\vartheta) \frac{n}{r}$. Then, there exists a constant $c = c(n,p)$ such that for any ball $B_\rho(x_o) \subset \R^n$ with $\rho > 0$ and any function $u \in W^{1,q}(B_\rho(x_o))$ we have
$$
\bint_{B_\rho(x_o)} \frac{|u|^p}{\rho^p} \, \d x \leq c \left[ \bint_{B_\rho(x_o)} \left( \frac{|u|^q}{\rho^q} + |Du|^q \right) \, \d x \right]^\frac{\vartheta p}{q} \left[ \bint_{B_\rho(x_o)} \frac{|u|^r}{\rho^r} \, \d x \right]^\frac{(1-\vartheta) p}{r}.
$$
\end{lemma}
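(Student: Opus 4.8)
The inequality is invariant under the scaling $x\mapsto x_o+\rho x$, so the first step is to reduce to the unit ball $B_1:=B_1(0)$. After a translation we may assume $x_o=0$; given $u\in W^{1,q}(B_\rho(0))$, put $v(y):=u(\rho y)$ for $y\in B_1$, so that $v\in W^{1,q}(B_1)$ with $Dv(y)=\rho\,(Du)(\rho y)$. The substitution $x=\rho y$ turns every mean value over $B_\rho(0)$ into the corresponding mean over $B_1$, namely $\bint_{B_1}|v|^s\,\d y=\bint_{B_\rho(0)}|u|^s\,\d x$ for each exponent $s$ and $\bint_{B_1}|Dv|^q\,\d y=\rho^q\bint_{B_\rho(0)}|Du|^q\,\d x$. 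Consequently, once the estimate is known on $B_1$ in the unscaled form
\[
\bint_{B_1}|v|^p\,\d y
\le
c\Big[\bint_{B_1}\big(|v|^q+|Dv|^q\big)\,\d y\Big]^{\frac{\vartheta p}{q}}
\Big[\bint_{B_1}|v|^r\,\d y\Big]^{\frac{(1-\vartheta)p}{r}},
\]
substituting back and dividing by $\rho^p=\rho^{\vartheta p}\rho^{(1-\vartheta)p}$ reproduces the asserted inequality on $B_\rho(0)$: the factor $\rho^{-\vartheta p}$ is absorbed into the first bracket, which converts $\bint(|u|^q+\rho^q|Du|^q)$ into $\bint(\rho^{-q}|u|^q+|Du|^q)$, and $\rho^{-(1-\vartheta)p}$ into the second bracket, while the constant $c$ is untouched. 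This is precisely why $c$ can be taken independent of $\rho$.

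It thus remains to prove the displayed estimate on $B_1$. Since $|B_1|$ is a dimensional constant, averages and integrals over $B_1$ coincide up to factors depending only on $n$, so the estimate is equivalent to the classical Gagliardo--Nirenberg interpolation inequality $\|u\|_{L^p(B_1)}\le c\,\|u\|_{W^{1,q}(B_1)}^{\vartheta}\,\|u\|_{L^r(B_1)}^{1-\vartheta}$. The hypothesis $-\tfrac np\le\vartheta(1-\tfrac nq)-(1-\vartheta)\tfrac nr$, divided by $-n$, reads $\tfrac1p\ge\lambda$ with $\lambda:=\vartheta\big(\tfrac1q-\tfrac1n\big)+(1-\vartheta)\tfrac1r$.

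Assume first $\lambda>0$ and set $p_o:=1/\lambda$, so $p\le p_o<\infty$. Choose a Sobolev exponent $q^\ast$ with $W^{1,q}(B_1)\hookrightarrow L^{q^\ast}(B_1)$ on the bounded Lipschitz domain $B_1$ --- namely $q^\ast=\tfrac{nq}{n-q}$ if $q<n$, and any sufficiently large finite exponent if $q\ge n$, chosen so that $\tfrac1{p_o}=\tfrac{\vartheta}{q^\ast}+\tfrac{1-\vartheta}{r}$ is a convex combination (for $q<n$ this is the identity $\tfrac1{q^\ast}=\tfrac1q-\tfrac1n$). Hölder's inequality then gives $\|u\|_{L^{p_o}(B_1)}\le\|u\|_{L^{q^\ast}(B_1)}^{\vartheta}\|u\|_{L^r(B_1)}^{1-\vartheta}$, and the Sobolev embedding bounds the first factor by $c\,\|u\|_{W^{1,q}(B_1)}$. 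Since $p\le p_o$ and $|B_1|<\infty$, Hölder's inequality once more yields $\|u\|_{L^p(B_1)}\le|B_1|^{\frac1p-\frac1{p_o}}\|u\|_{L^{p_o}(B_1)}$, and combining these estimates proves the claim. If instead $\lambda\le0$, then $\vartheta\big(\tfrac1q-\tfrac1n\big)\le-(1-\vartheta)\tfrac1r<0$ forces $q>n$, and the estimate follows from the classical Gagliardo--Nirenberg inequality in its $p=\infty$ form (which rests on Morrey's embedding $W^{1,q}(B_1)\hookrightarrow L^\infty(B_1)$) together with $\|u\|_{L^p(B_1)}\le|B_1|^{1/p}\|u\|_{L^\infty(B_1)}$; the details are routine.

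The only genuine difficulty here is the bookkeeping rather than any analysis: aligning the interpolation parameter $\vartheta$ with the critical exponent $p_o=1/\lambda$, verifying $p\le p_o$ so that the inequality form of the exponent relation can be absorbed by the finiteness of $|B_1|$, and tracking the powers of $\rho$ through the scaling so that the final constant is genuinely $\rho$-independent. Everything else reduces to the Sobolev embedding on $B_1$ and Hölder's inequality, i.e.\ to the classical Gagliardo--Nirenberg inequality, which we take as known.
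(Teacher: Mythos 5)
The paper offers no proof of this lemma: it is recalled as the classical Gagliardo--Nirenberg interpolation inequality, so there is no in-paper argument to compare with and your proposal has to stand on its own. Your scaling reduction to the unit ball is correct and is indeed the standard way to see that the constant is $\rho$-independent, and your treatment of the unit-ball estimate is complete and correct in the sub-case $q<n$ (where $q^\ast=\frac{nq}{n-q}$, the identity $\frac{1}{p_o}=\frac{\vartheta}{q^\ast}+\frac{1-\vartheta}{r}$ holds, and Lyapunov interpolation plus the Sobolev embedding plus one more application of H\"older's inequality for $p\le p_o$ finish the job).

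There is, however, a genuine gap in the sub-case $\lambda>0$, $q\ge n$. You ask for a finite exponent $q^\ast$ with $W^{1,q}(B_1)\hookrightarrow L^{q^\ast}(B_1)$ and $\frac{1}{p_o}=\frac{\vartheta}{q^\ast}+\frac{1-\vartheta}{r}$. Since $\frac{1}{p_o}=\vartheta\big(\frac1q-\frac1n\big)+\frac{1-\vartheta}{r}$, this forces $\frac{1}{q^\ast}=\frac1q-\frac1n\le 0$, so no such Lebesgue exponent exists; for every finite $q^\ast$ one has $\frac{\vartheta}{q^\ast}+\frac{1-\vartheta}{r}>\frac{1}{p_o}$, and the resulting interpolation controls an $L^s$-norm with $s<p_o$, which H\"older cannot upgrade to $L^{p_o}$ on a bounded domain. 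This sub-case is not vacuous for the paper: the lemma is invoked with the second exponent equal to $\nu p$ (and $\sigma$, $p$ in the first slot), which need not be below $n$. For $q>n$ the argument can be repaired by replacing the Sobolev embedding with Morrey's embedding $W^{1,q}(B_1)\hookrightarrow L^\infty(B_1)$ and interpolating $\|u\|_{L^{p_o}}\le\|u\|_{L^\infty}^{1-r/p_o}\|u\|_{L^r}^{r/p_o}$ together with the $L^\infty$-form of Gagliardo--Nirenberg; a computation shows the exponent $\vartheta'(1-\frac{r}{p_o})$ that comes out equals exactly $\vartheta$, so the homogeneities match. The borderline $q=n$ cannot be handled this way (no $L^\infty$ embedding) and requires the genuine Gagliardo--Nirenberg argument; note that for $q=n$ the inequality does hold for $\vartheta<1$, which is exactly the hypothesis $\vartheta\in(0,1)$. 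A similar exponent-matching issue is hidden in your final case $\lambda\le0$: the ``$p=\infty$ form'' of Gagliardo--Nirenberg produces the exponent $\vartheta'$ determined by $0=\vartheta'(\frac1q-\frac1n)+\frac{1-\vartheta'}{r}$ rather than the given $\vartheta\ge\vartheta'$, and converting one into the other needs the additional (true, but unstated) bound $\|u\|_{L^r(B_1)}\le c\|u\|_{W^{1,q}(B_1)}$. In short: the architecture is right, but the case $q\ge n$ as written does not go through and needs the Morrey-based (or full Nirenberg) argument rather than a finite Sobolev exponent.
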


Finally, the proof of the following two lemmas can be found in
\cite{Boegelein-Duzaar-Scheven:2022}. We note that in
\cite{Boegelein-Duzaar-Scheven:2022}, a slightly different definition
of intrinsic cylinders has been used. In order to obtain the following
statements, we replace the radii $\rho,r$ in
\cite{Boegelein-Duzaar-Scheven:2022} by
$\theta^{\frac{1-q}{1+q}}\rho,\theta^{\frac{1-q}{1+q}} r$. 
We start with an energy estimate for solutions of~\eqref{eq:dne}.

\begin{lemma}[\!\!{\cite[Lemma 3.1]{Boegelein-Duzaar-Scheven:2022}}]
\label{lem:caccioppoli}
Let $p>1$, $q > 0$ and $u$ be a weak solution to \eqref{eq:dne} where the vector field $\A$ satisfies \eqref{assumption:A}. Then, there exists a constant $c=c(p,q,C_o,C_1)$ such that on every cylinder $Q_\rho^{(\lambda,\theta)}(z_o)\Subset \Omega_T$ with $\rho>0$ and $\lambda, \theta>0$ and for any $r\in [\rho/2,\rho)$ and all $a \in \R^N$ the following energy estimate
\begin{align}\label{energy-estimate}\nonumber
\sup_{t\in \Lambda_r^{(\lambda)}(t_o)}&\, \bint_{B_r^{(\theta)}(x_o)} \frac{\babs{ \u^\frac{q+1}{2}(t)-\a^\frac{q+1}{2} }^2}{\lambda^{2-p} r^{q+1}} \d x + \biint_{Q_r^{(\lambda,\theta)}(z_o)} |Du|^p\, \d x\d t \\
& \leq c \biint_{Q_\rho^{(\lambda,\theta)}(z_o)} \left[ \theta^\frac{p(q-1)}{q+1} \frac{|u-a |^p}{(\rho-r)^p}+ \frac{\babs{ \u^\frac{q+1}{2}-\a^\frac{q+1}{2} }^2}{ \lambda^{2-p} (\rho^{q+1}-r^{q+1} )} + |F|^p  \right] \d x\d t 
\end{align}
holds true.
\end{lemma}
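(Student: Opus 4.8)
The plan is to follow the standard testing procedure for doubly nonlinear energy estimates, adapted to the anisotropic cylinders $Q_\rho^{(\lambda,\theta)}(z_o)$. Fix $\tau\in\Lambda_r^{(\lambda)}(t_o)$ and test the weak formulation with $\varphi=\zeta\,\eta^p\,(u-a)$, where $\eta\in C_0^\infty\big(B_\rho^{(\theta)}(x_o)\big)$ satisfies $\eta\equiv1$ on $B_r^{(\theta)}(x_o)$ and $|D\eta|\le c\,\big(\theta^{\frac{1-q}{1+q}}(\rho-r)\big)^{-1}$, and where $\zeta$ is a piecewise linear cut-off in time that increases from $0$ to $1$ across the left collar $\big(t_o-\lambda^{2-p}\rho^{q+1},\,t_o-\lambda^{2-p}r^{q+1}\big)$ with slope $\big(\lambda^{2-p}(\rho^{q+1}-r^{q+1})\big)^{-1}$, equals $1$ up to time $\tau$, and then drops back to $0$ on $(\tau,\tau+\eps)$; the supremum term is recovered by letting $\eps\downarrow0$ afterwards. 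Since $u$ carries only the time regularity $u\in C\big((0,T);L^{q+1}_{\loc}\big)$, so that $\u^q=|u|^{q-1}u$ has no pointwise time derivative, this testing has to be carried out on the level of the equation mollified in time via $\mollifytime{\cdot}{h}$, with the limit $h\to0$ taken at the end; I regard this approximation as routine and will perform it tacitly.

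For the parabolic term, set $\b[a,u]:=\tfrac{q}{q+1}|u|^{q+1}-\u^q\cdot a+\tfrac1{q+1}|a|^{q+1}$, which is nonnegative by Young's inequality and satisfies the chain rule $\partial_t(\u^q)\cdot(u-a)=\partial_t\b[a,u]$; this is checked directly from $\partial_t(\u^q)\cdot u=q\,\u^q\cdot\partial_t u=\tfrac{q}{q+1}\partial_t|u|^{q+1}$. Consequently, an integration by parts in time (no boundary terms appear since $\varphi$ is compactly supported in $Q_\rho^{(\lambda,\theta)}(z_o)$) turns the contribution of $\u^q\cdot\partial_t\varphi$ into $\iint\eta^p\,\b[a,u]\,\partial_t\zeta\,\dx\dt$. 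Splitting $\partial_t\zeta$ according to its two slopes, the part supported on $(\tau,\tau+\eps)$ produces $-\int_{B_\rho^{(\theta)}(x_o)}\eta^p\,\b[a,u(\cdot,\tau)]\,\dx$ in the limit $\eps\downarrow0$ (for a.e.\ $\tau$, by the $L^{q+1}$-in-time continuity of $u$), while the collar part is bounded by $c\,\big(\lambda^{2-p}(\rho^{q+1}-r^{q+1})\big)^{-1}\iint_{Q_\rho^{(\lambda,\theta)}(z_o)}\b[a,u]\,\dx\dt$. Invoking the elementary two-sided comparison $\tfrac1c\,\babs{\u^\frac{q+1}{2}-\a^\frac{q+1}{2}}^2\le\b[a,u]\le c\,\babs{\u^\frac{q+1}{2}-\a^\frac{q+1}{2}}^2$, a purely algebraic fact in the spirit of Lemma~\ref{lem:Acerbi-Fusco} (see also \cite{BDKS:2018}), this already accounts for the supremum term on the left-hand side and the middle term on the right-hand side of the asserted inequality.

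The remaining terms are handled by the usual Young-inequality bookkeeping. With $D\varphi=\zeta\big(\eta^p\,Du+p\,\eta^{p-1}(u-a)\otimes D\eta\big)$, the coercivity in \eqref{assumption:A} gives $\iint\zeta\eta^p\,\A(x,t,u,Du)\cdot Du\,\dx\dt\ge C_o\iint\zeta\eta^p|Du|^p\,\dx\dt$; the off-diagonal term is estimated via the growth bound in \eqref{assumption:A} and Young's inequality by $\tfrac{C_o}{4}\iint\zeta\eta^p|Du|^p\,\dx\dt+c\iint\zeta\,|D\eta|^p|u-a|^p\,\dx\dt$, and the $F$-term analogously by $\tfrac{C_o}{4}\iint\zeta\eta^p|Du|^p\,\dx\dt+c\iint\zeta\eta^p|F|^p\,\dx\dt+c\iint\zeta\,|D\eta|^p|u-a|^p\,\dx\dt$. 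Absorbing the two $\tfrac{C_o}{4}$-terms into the left-hand side — so that \emph{no} iteration argument is needed at this stage — and using $|D\eta|^p\le c\,\theta^{\frac{p(q-1)}{q+1}}(\rho-r)^{-p}$, one obtains, for each $\tau$, a bound of $c\iint_{Q_\rho^{(\lambda,\theta)}}\big[\,\theta^{\frac{p(q-1)}{q+1}}(\rho-r)^{-p}|u-a|^p+\b[a,u]\,(\lambda^{2-p}(\rho^{q+1}-r^{q+1}))^{-1}+|F|^p\,\big]\dx\dt$ for both $\int_{B_r^{(\theta)}}\b[a,u(\cdot,\tau)]\,\dx$ and $\iint_{B_r^{(\theta)}\times(t_o-\lambda^{2-p}r^{q+1},\tau)}|Du|^p\,\dx\dt$ (here $\eta\equiv1$ on $B_r^{(\theta)}$ and $\zeta\equiv1$ on $\Lambda_r^{(\lambda)}$). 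Taking the supremum over $\tau$ in the first quantity and letting $\tau$ tend to the right endpoint of $\Lambda_r^{(\lambda)}(t_o)$ in the second, adding the two, then dividing by $|Q_\rho^{(\lambda,\theta)}(z_o)|$ and using $r\in[\rho/2,\rho)$ — whence $|B_r^{(\theta)}(x_o)|\approx|B_\rho^{(\theta)}(x_o)|$ and $r^{q+1}\approx\rho^{q+1}$ — converts every term into the mean-value form stated in the lemma.

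The main obstacle is the parabolic step: making the formal identity $\partial_t(\u^q)\cdot(u-a)=\partial_t\b[a,u]$ and the extraction of the supremum rigorous for a merely $L^{q+1}$-in-time, $\R^N$-valued solution requires the time mollification and a careful limit passage at $\tau$, together with establishing the vector-valued comparison between $\b[a,u]$ and $\babs{\u^{(q+1)/2}-\a^{(q+1)/2}}^2$. The rest is routine.
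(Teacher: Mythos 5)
Your proposal is correct and follows exactly the standard testing argument (cut-off $\zeta\eta^p(u-a)$, the boundary term $\b[a,u]$ with its chain rule and two-sided comparison to $\babs{\u^{\frac{q+1}{2}}-\a^{\frac{q+1}{2}}}^2$, time mollification, Young's inequality and direct absorption) that the paper invokes by citing \cite[Lemma 3.1]{Boegelein-Duzaar-Scheven:2022}, with the radii correctly rescaled to $\theta^{\frac{1-q}{1+q}}\rho$. No substantive difference from the paper's route.
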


Then we state the gluing lemma.

\begin{lemma}[\!\!{\cite[Lemma 3.2]{Boegelein-Duzaar-Scheven:2022}}] \label{lem:gluing}
Let $p>1$, $q > 0$ and $u$ be a weak solution to \eqref{eq:dne} where the vector field $\A$ satisfies \eqref{assumption:A}. Then, there exists a constant $c=c(C_1)$ such that on every cylinder $Q_{\rho}^{(\lambda,\theta)}(z_o)\Subset \Omega_T$ with $\rho>0$ and $\lambda, \theta>0$ there exists $\hat \rho \in [\frac{\rho}{2},\rho]$ such that for all $t_1,t_2 \in \Lambda_\rho^{(\lambda)}(t_o)$ there holds
$$
\babs{ (\u^q)_{\hat \rho}^{(\theta)}(t_2) - (\u^q)_{\hat \rho}^{(\theta)}(t_1) } \leq c \lambda^{2-p} \theta^\frac{q-1}{q+1} \rho^q \biint_{Q_\rho^{(\lambda,\theta)}} \left( |Du|^{p-1} + |F|^{p-1} \right) \, \d x \d t.
$$
\end{lemma}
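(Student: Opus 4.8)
The plan is to test the weak formulation of~\eqref{eq:dne} with a test function that is localized in space around a sphere of radius $\hat\rho$ and interpolates linearly in time between $t_1$ and $t_2$, in order to produce the difference of the spatial means $(\u^q)_{\hat\rho}^{(\theta)}(t_2)-(\u^q)_{\hat\rho}^{(\theta)}(t_1)$ on the left-hand side. More precisely, I would fix a cutoff $\zeta=\zeta(x)\in C_0^\infty(B_\rho^{(\theta)}(x_o))$ that equals $1$ on $B_{\rho/2}^{(\theta)}(x_o)$ with $|D\zeta|\lesssim (\theta^{\frac{1-q}{1+q}}\rho)^{-1}$, together with a piecewise-affine temporal cutoff $\psi_\eta(t)$ approximating $\mathds{1}_{[t_1,t_2]}$ (so $\psi_\eta'\to\delta_{t_1}-\delta_{t_2}$ as $\eta\to0$). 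Since test functions must be $C_0^\infty$ in time, I would first work with the Steklov or time-mollified version of the equation, plug in $\varphi(x,t)=\zeta(x)^{p^\sharp}\psi_\eta(t)\,e$ for an arbitrary fixed direction $e\in\R^N$, and pass to the limit; the class $u\in C((0,T);L^{q+1}_{\loc})$ guarantees the parabolic term converges to the desired trace difference.

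The key steps, in order, are: \textbf{(i)} insert the test function into the mollified weak form and let the mollification parameter tend to zero, obtaining
$$
\biint_{\Omega_T} |u|^{q-1}u\cdot \partial_t\varphi\,\d x\d t \;\longrightarrow\; -\!\!\bint_{B_\rho^{(\theta)}} \zeta^{p^\sharp}\big(\u^q(x,t_2)-\u^q(x,t_1)\big)\cdot e \,\d x\cdot\big|B_\rho^{(\theta)}\big|^{-1}\text{-type term};
$$
\textbf{(ii)} rearrange so that the left side is $\big((\u^q)_{\hat\rho}^{(\theta)}(t_2)-(\u^q)_{\hat\rho}^{(\theta)}(t_1)\big)\cdot e$, which forces the choice of $\hat\rho$: the spatial integral of $\zeta^{p^\sharp}$ against the trace should reproduce a genuine mean over a ball $B_{\hat\rho}^{(\theta)}$ — here I would invoke a mean-value/Fubini argument over the radial variable to select $\hat\rho\in[\rho/2,\rho]$ with good control, exactly as in the standard gluing lemma; \textbf{(iii)} bound the remaining diffusion and right-hand-side terms
$$
\Bigl|\iint \big(\A(x,t,u,Du)+|F|^{p-2}F\big)\cdot D\varphi\,\d x\d t\Bigr|
\;\le\;
\iint_{\spt\varphi} \big(C_1|Du|^{p-1}+|F|^{p-1}\big)\,p^\sharp\,\zeta^{p^\sharp-1}|D\zeta|\,|\psi_\eta|\,\d x\d t,
$$
using the growth bound~\eqref{assumption:A}$_2$; \textbf{(iv)} let $\eta\to0$ so $\psi_\eta$ becomes $\mathds{1}_{[t_1,t_2]}$, estimate $|D\zeta|\lesssim(\theta^{\frac{1-q}{1+q}}\rho)^{-1}$, extend the time integration to all of $\Lambda_\rho^{(\lambda)}(t_o)$, and rewrite $|\Lambda_\rho^{(\lambda)}|\cdot|B_\rho^{(\theta)}|/(\theta^{\frac{1-q}{1+q}}\rho)$ in terms of the measure $|Q_\rho^{(\lambda,\theta)}|$ to convert the integral into the average on the right-hand side; tracking the powers of $\lambda,\theta,\rho$ gives the prefactor $\lambda^{2-p}\theta^{\frac{q-1}{q+1}}\rho^{q}$; \textbf{(v)} take the supremum over unit vectors $e$ to pass from the scalar pairing to the norm $\babs{(\u^q)_{\hat\rho}^{(\theta)}(t_2)-(\u^q)_{\hat\rho}^{(\theta)}(t_1)}$.

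The main obstacle is step~(ii): reconciling the smooth cutoff $\zeta^{p^\sharp}$ (needed so that $\varphi$ is admissible and the diffusion term carries the harmless factor $\zeta^{p^\sharp-1}$) with the requirement that the parabolic term produce an \emph{honest} average over a ball $B_{\hat\rho}^{(\theta)}$ rather than a $\zeta$-weighted integral. The resolution is the familiar trick of choosing $\zeta$ as a function of $|x-x_o|$ and averaging the resulting identity over the radial parameter on $[\rho/2,\rho]$; a pigeonhole/mean-value argument then yields a single radius $\hat\rho\in[\rho/2,\rho]$ for which the weighted integral is comparable to the unweighted mean with constants depending only on $C_1$ (the dimensional constants cancelling against the normalization). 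Since the energy estimate~\eqref{energy-estimate} and this lemma are quoted verbatim from~\cite[Lemma~3.2]{Boegelein-Duzaar-Scheven:2022}, with only the cosmetic substitution $\rho\mapsto\theta^{\frac{1-q}{1+q}}\rho$ in the spatial radii, the bookkeeping of the scaling factors is where all the work lies, and it is purely routine once the structure above is in place.
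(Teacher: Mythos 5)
The paper does not actually prove this lemma; it is quoted verbatim from \cite[Lemma 3.2]{Boegelein-Duzaar-Scheven:2022}, with the only change being the substitution of the spatial radii $\rho\mapsto\theta^{\frac{1-q}{1+q}}\rho$. Your overall strategy (Steklov-averaged weak formulation, a temporal cutoff approximating $\mathds{1}_{[t_1,t_2]}$, a radial mean-value selection of $\hat\rho$, and a supremum over directions $e$) is indeed the standard route taken in that reference and its predecessors, and your bookkeeping of the factors $\lambda^{2-p}\theta^{\frac{q-1}{q+1}}\rho^q$ in step (iv) would come out correctly.

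However, there is a genuine gap in step (ii), and it stems from the choice of spatial cutoff. You take $\zeta$ equal to $1$ on $B_{\rho/2}^{(\theta)}$ with $|D\zeta|\lesssim(\theta^{\frac{1-q}{1+q}}\rho)^{-1}$ — this is the cutoff for the Caccioppoli estimate, not for the gluing lemma. With it, the parabolic term produces the difference of the $\zeta^{p^\sharp}$-weighted integrals of $\u^q$ at $t_1$ and $t_2$, and your proposed repair — that ``the weighted integral is comparable to the unweighted mean'' after a pigeonhole over radii — does not close the argument: comparability of the two integrals of $\u^q(\cdot,t_i)$ individually says nothing about their \emph{differences}, and the left-hand side of the lemma is exactly such a difference, which may be far smaller than either term due to cancellation. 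Any attempt to pass from the weighted to the unweighted difference would require controlling the spatial oscillation of $\u^q(\cdot,t_2)-\u^q(\cdot,t_1)$, which is circular. The correct choice is to let $\zeta=\zeta_\delta$ be a Lipschitz approximation of $\mathds{1}_{B_{\hat r}^{(\theta)}}$ itself (equal to $1$ on $B_{\hat r-\delta}^{(\theta)}$, vanishing outside $B_{\hat r}^{(\theta)}$, affine in $|x-x_o|$ in between). Then as $\delta\to0$ the parabolic term converges \emph{exactly} to $\int_{B_{\hat r}^{(\theta)}}\big(\u^q(\cdot,t_2)-\u^q(\cdot,t_1)\big)\cdot e\,\dx$, i.e.\ to an honest unweighted integral over a single ball, while the diffusion and $F$ terms converge to a flux integral over $\partial B_{\hat r}^{(\theta)}\times(t_1,t_2)$. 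The radial Fubini/mean-value argument is then applied to \emph{that} surface integral of $C_1|Du|^{p-1}+|F|^{p-1}$ over the full time interval $\Lambda_\rho^{(\lambda)}(t_o)$ (so that the resulting $\hat\rho\in[\frac\rho2,\rho]$ works simultaneously for all $t_1,t_2$), bounding it by $\frac{2}{\theta^{(1-q)/(1+q)}\rho}$ times the solid integral over $Q_\rho^{(\lambda,\theta)}$; dividing by $|B_{\hat\rho}^{(\theta)}|$ yields the asserted estimate. The exponent $p^\sharp$ on the cutoff is likewise unnecessary here, since no absorption of gradient terms takes place in this lemma.
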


\section{Parabolic Sobolev-Poincar\'e type inequalities in case \texorpdfstring{$q+1 \geq p$}{q+1>=p}}
\label{sec:sobolev-fast-diffusion}

The goal of this section is to prove Sobolev-Poincar\'e inequalities that bound the right-hand side of the energy estimate~\eqref{energy-estimate} from above.
It turns out that different strategies are required for the cases $q+1\geq p$ and $q+1<p$. 
Therefore, we only consider the first case here and postpone the second one to the next section.

We use $\lambda$-intrinsic
\begin{equation} \label{eq:lambda-intrinsic}
\frac{1}{C_\lambda} \biint_{Q_{2\rho}^{(\lambda,\theta)}}  |Du|^p + |F|^p \, \d x \d t \leq \lambda^p\leq C_\lambda \biint_{Q_{\rho}^{(\lambda,\theta)}}  |Du|^p + |F|^p \, \d x \d t,
\end{equation}
$\theta$-intrinsic
\begin{equation} \label{eq:theta-intrinsic}
\frac{1}{C_\theta} \biint_{Q_{2\rho}^{(\lambda,\theta)}}  \frac{|u|^{p^\sharp}}{(2\rho)^{p^\sharp}} \, \d x \d t \leq \theta^{\frac{2p^\sharp}{q+1}} \leq C_\theta \biint_{Q_{\rho}^{(\lambda,\theta)}}  \frac{|u|^{p^\sharp}}{\rho^{p^\sharp}} \, \d x \d t
\end{equation}
scalings, in which $p^\sharp = \max \{p,q+1\}$.
However, for the cylinders constructed in
Section~\ref{subsec:cylinder-construction}, we are not able to prove
the $\theta$-intrinsic scaling in every case. In general, we can only prove
the first of the two inequalities in~\eqref{eq:theta-intrinsic}, which we refer to as
$\theta$-subintrinsic scaling. In
Section~\ref{sec:reverse-Holder-final-section} we will show that 
the cylinders used in the proof either satisfy the $\theta$-intrinsic
scaling~\eqref{eq:theta-intrinsic} or
a scaling of the form
\begin{equation} \label{eq:theta-degenerate-1}
\frac{1}{C_\theta} \biint_{Q_{2\rho}^{(\lambda,\theta)}}  \frac{|u|^{p^\sharp}}{(2\rho)^{p^\sharp}} \, \d x \d t \leq \theta^{\frac{2p^\sharp}{q+1}} \leq C_\theta \left( \biint_{Q_{\rho}^{(\lambda,\theta)}} |Du|^p + |F|^p \, \d x \d t \right)^\frac{2 p^\sharp}{p(q+1)}.
\end{equation}
We call this scaling $\theta$-singular because it means that the
solution is in a certain sense small compared to its oscillation, in which case the
differential equation~\eqref{eq:dne} becomes singular.

For now, we suppose that $q+1 \geq p$. Then~\eqref{eq:theta-intrinsic} reads as
\begin{equation} \label{eq:theta-intrinsic-q+1}
\frac{1}{C_\theta} \biint_{Q_{2\rho}^{(\lambda,\theta)}}  \frac{|u|^{q+1}}{(2\rho)^{q+1}} \, \d x \d t \leq \theta^{2} \leq C_\theta \biint_{Q_{\rho}^{(\lambda,\theta)}}  \frac{|u|^{q+1}}{\rho^{q+1}} \, \d x \d t
\end{equation}
and~\eqref{eq:theta-degenerate-1} as
\begin{equation} \label{eq:theta-degenerate}
\frac{1}{C_\theta} \biint_{Q_{2\rho}^{(\lambda,\theta)}}  \frac{|u|^{q+1}}{(2\rho)^{q+1}} \, \d x \d t \leq \theta^{2} \leq C_\theta \left( \biint_{Q_{\rho}^{(\lambda,\theta)}} |Du|^p + |F|^p \, \d x \d t \right)^\frac{2}{p}.
\end{equation}

We start with a Sobolev-Poincar\'e type estimate for the second term
appearing on the right-hand side of the energy estimate from
Lemma~\ref{lem:caccioppoli}.

\begin{lemma} \label{lem:sobo-poincare-q+1}
  Suppose that $q > 1$, $\frac{n(q+1)}{n+q+1}< p \leq q+1$,
  and that $u$ is a weak solution to~\eqref{eq:dne}, under
  assumption~\eqref{assumption:A}. Moreover, we consider a cylinder
$Q_{2\rho}^{(\lambda,\theta)}(z_o) \Subset \Omega_T$ and assume 
that~\eqref{eq:lambda-intrinsic} is satisfied together with either~\eqref{eq:theta-intrinsic-q+1} or~\eqref{eq:theta-degenerate}. Then the following Sobolev-Poincar\'e inequality holds:
\begin{align*}
  \lambda^{p-2} &\biint_{Q_\rho^{(\lambda,\theta)}(z_o)}
          \frac{\babs{ \u^\frac{q+1}{2}-\a^\frac{q+1}{2} }^2}{ \rho^{q+1} } \, \d x \d t \\
&\leq \eps \left( \sup_{t \in \Lambda_\rho^{(\lambda)}(t_o)} \lambda^{p-2} \bint_{B_\rho^{(\theta)}(x_o)} \frac{\babs{ \u^\frac{q+1}{2}(t)-\a^\frac{q+1}{2} }^2}{ \rho^{q+1} } \, \d x + \biint_{Q_\rho^{(\lambda,\theta)}(z_o)} |Du|^p \, \d x \d t \right) \\
&\phantom{+} + c \eps^{-\beta} \left[ \left( \biint_{Q_\rho^{(\lambda,\theta)}(z_o)} |Du|^{\nu p} \, \d x \d t  \right)^\frac{1}{\nu} + \biint_{Q_\rho^{(\lambda,\theta)}(z_o)} |F|^p \, \d x \d t  \right],
\end{align*}
where $\max \left\{\frac{n(q+1)}{p(n+q+1)} , \frac{p-1}{p}
\right\} \leq \nu \leq 1$ and $a = (u)_{z_o;\rho}^{(\theta,\lambda)}$. The preceding
estimate holds for an arbitrary $\eps\in(0,1)$ with a constant
$c = c(n,p,q,C_1,C_\theta,C_\lambda)> 0$ and $\beta = \beta(n,p,q) > 0$.
\end{lemma}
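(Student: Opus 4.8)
The plan is to reduce the left-hand side to a product of a spatial Sobolev--Poincar\'e norm and a supremum term via a slicewise interpolation, then absorb the supremum factor by Young's inequality. Writing $w := \u^{\frac{q+1}{2}} - \a^{\frac{q+1}{2}}$ with $a = (u)_{z_o;\rho}^{(\theta,\lambda)}$, the first step is to bound $\babs{\u^{\frac{q+1}{2}} - \a^{\frac{q+1}{2}}}^2$ in terms of $|u-a|$-type quantities via Lemma~\ref{lem:Acerbi-Fusco}/Lemma~\ref{lem:a-b}, and to recognize that the key exponent relation in the hypothesis, $p > \frac{n(q+1)}{n+q+1}$, is exactly what makes the relevant Gagliardo--Nirenberg exponent admissible. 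Concretely, on each time slice $t \in \Lambda_\rho^{(\lambda)}(t_o)$ I would apply Lemma~\ref{lem:GN} on the ball $B_\rho^{(\theta)}(x_o)$ to the function $w(\cdot,t)$ (after subtracting its slicewise mean, which is harmless up to the quasi-minimality Lemma~\ref{lem:uavetoa}), interpolating the $L^2$-norm between the $W^{1,r}$-norm for a suitable $r < 2$ — chosen so that $Dw \sim |u|^{\frac{q-1}{2}}|Du|$ is controlled by $|Du|^p$ after H\"older — and a lower-order $L^\kappa$-norm of $w$ that will feed into the supremum term.

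The core computation is the interaction between the two intrinsic scalings. After the slicewise GN inequality, integrating in time and using H\"older in $t$, one obtains on the right a factor carrying a power of $\theta^{\frac{q-1}{q+1}}\rho$ (from the intrinsic radius) and a factor that is a time-space average of $|u|$ to some power; here I would invoke the $\theta$-(sub)intrinsic relation, either~\eqref{eq:theta-intrinsic-q+1} or~\eqref{eq:theta-degenerate}, to replace that $|u|$-average by a power of $\theta^2$, and then the $\lambda$-intrinsic relation~\eqref{eq:lambda-intrinsic} to trade residual powers of $\lambda$ against the energy $\biint |Du|^p + |F|^p$. The bookkeeping of exponents must be done so that, in the end, the power of the energy average appearing is at most $1$ (so that, after possibly raising to a power $\le 1$, it can be estimated by $(\biint |Du|^{\nu p})^{1/\nu} + \biint |F|^p$ via H\"older), while the remaining factor is a power strictly less than $1$ of the supremum term $\sup_t \lambda^{p-2}\bint_{B_\rho^{(\theta)}} |w(t)|^2 / \rho^{q+1}$. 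At that point Young's inequality with exponents tuned to that sub-unit power produces the $\eps(\cdots) + c\eps^{-\beta}(\cdots)$ structure, with $\beta$ depending only on $n,p,q$ through the exponents.

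I would treat the two cases~\eqref{eq:theta-intrinsic-q+1} and~\eqref{eq:theta-degenerate} in parallel, noting that in the $\theta$-singular case one additionally has $\theta \lesssim \lambda$ (the content of~\eqref{eq:theta-degenerate-1}), which is what prevents the powers of $\theta/\lambda$ generated by the intrinsic radius from blowing up; this is the place where the hypothesis is genuinely used and where I expect the delicate part of the exponent arithmetic to lie. The admissible range $\frac{n(q+1)}{p(n+q+1)} \le \nu \le 1$ for the gained integrability parameter should emerge precisely from the GN admissibility condition $-\frac{n}{2} \le \vartheta(1 - \frac{n}{r}) - (1-\vartheta)\frac{n}{\kappa}$ once $r$ is expressed back in terms of $\nu p$ via H\"older. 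The main obstacle, and where care is needed, is ensuring that after all substitutions the exponent on the energy average is $\le 1$ and the exponent on the supremum term is $< 1$ simultaneously — this is a constrained optimization over the interpolation parameter $\vartheta$ and the H\"older exponent in time, and it is exactly this constraint that forces the lower bound $p > \frac{n(q+1)}{n+q+1}$; I would verify it by an explicit (if tedious) choice of $\vartheta$ and then check the two inequalities hold with room to spare.
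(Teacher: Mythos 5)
There is a genuine gap at the very first step of your reduction: you declare that subtracting the slicewise spatial mean is ``harmless up to the quasi-minimality Lemma~\ref{lem:uavetoa}.'' It is not. Lemma~\ref{lem:uavetoa} only bounds the quantity formed with the mean over a subset by the quantity formed with an \emph{arbitrary} constant; it goes in the wrong direction for your purpose. To pass from the fixed constant $a=(u)_{z_o;\rho}^{(\theta,\lambda)}$ to the time-dependent slicewise means you must, via the triangle inequality, additionally control the purely temporal oscillation
\begin{equation*}
  \bint_{\Lambda_\rho^{(\lambda)}(t_o)}
  \frac{\big|[(\u^q)_{\widehat B}(t)]^{\frac{q+1}{2q}}-[(\u^q)_{\widehat Q}]^{\frac{q+1}{2q}}\big|^2}{\rho^{q+1}}\,\d t,
\end{equation*}
and no amount of slicewise Gagliardo--Nirenberg or spatial Sobolev--Poincar\'e can touch this term, since it contains no spatial variation at all. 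This term is where the equation itself must be used, through the gluing lemma (Lemma~\ref{lem:gluing}), which your proposal never invokes. In the paper's proof this term (called $\mathrm{II}$ there) accounts for the bulk of the work: one applies Lemma~\ref{lem:a-b} and the gluing lemma to get a bound of the form $c\,\lambda^{\frac{2-p}{q}}\theta^{\frac{q-1}{q}}(\biint|Du|^{p-1}+|F|^{p-1})^{\frac{q+1}{q}}$, and then a delicate case distinction between~\eqref{eq:theta-intrinsic-q+1} and~\eqref{eq:theta-degenerate} is needed to absorb the resulting powers of $\theta$ (in the intrinsic case by splitting off a factor of the left-hand side itself and reabsorbing it; in the singular case by $\theta\lesssim\lambda$). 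Without this ingredient the proof cannot close.

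Your treatment of the remaining, genuinely spatial part is broadly consistent with the paper's estimate of the term $\mathrm{I}$ (Young's inequality splitting off an $\eps$-multiple of the sup-term, slicewise Sobolev, H\"older in time, the $\theta$-subintrinsic and $\lambda$-intrinsic scalings to trade powers), although the paper applies the Sobolev inequality to $u-(u)_B(t)$ and converts via Lemma~\ref{lem:Acerbi-Fusco} rather than interpolating $w=\u^{\frac{q+1}{2}}-\a^{\frac{q+1}{2}}$ directly; the latter would force you to control $|Dw|\sim|u|^{\frac{q-1}{2}}|Du|$, generating extra $|u|$-factors whose cancellation you have not verified. But the decisive omission is the time-oscillation term and the gluing lemma, so the proposal as written does not constitute a proof.
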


\begin{proof}
Since the cylinder is fixed throughout the proof, we
use the more compact notations $Q:=Q_\rho^{(\theta,\lambda)}(z_o)$,
$B:=B_\rho^{(\theta)}(x_o)$ and
$\Lambda:=\Lambda_\rho^{(\lambda)}(t_o)$. Furthermore, with the radius
$\hat\rho\in[\frac\rho2,\rho]$ provided by Lemma~\ref{lem:gluing}, we
write $\widehat B:=B_{\hat\rho}^{(\theta)}(x_o)$ and $\widehat
Q:=\widehat B\times\Lambda$. 
Using first Lemma~\ref{lem:uavetoa} with $\alpha=\frac{q+1}{2}$ and $p=2$
and then the triangle inequality, we estimate 
\begin{align}\label{def-I-II}
&\lambda^{p-2} \biint_{Q}
  \frac{|\u^\frac{q+1}{2} - \a^\frac{q+1}{2}|^2}{\rho^{q+1}} \, \d x \d t \\\nonumber
&\qquad\leq 
c\lambda^{p-2} \biint_{Q} \frac{\big|\u^\frac{q+1}{2} - {[(\u^q)_{\widehat B}(t)]}^\frac{q+1}{2q}\big|^2}{\rho^{q+1}} \, \d x \d t \\\nonumber
  &\qquad\phantom{+} +
    c\lambda^{p-2} \bint_{\Lambda}
    \frac{\big|[(\u^q)_{\widehat B}(t)]^\frac{q+1}{2q} -
    [(\u^q)_{\widehat Q}]^\frac{q+1}{2q}\big|^2}{\rho^{q+1}} \, \d t\\\nonumber
&\qquad=: \mathrm{I} + \mathrm{II}.
\end{align}

We use Lemma~\ref{lem:uavetoa} with $\alpha=\frac{q+1}{2q}$ and $p=2$ to estimate
\begin{align}\label{bound-I}
\mathrm I &\leq \frac{c\lambda^{p-2}}{\rho^{q+1}} \sup_{t\in\Lambda} \left[ \bint_B
            \big|\u^\frac{q+1}{2} - {[(\u^q )_{\widehat B}(t)]}^\frac{q+1}{2q}\big|^2 \, \d x \right]^{\frac{2}{n+2}} \\\nonumber
&\phantom{+} \cdot \bint_{\Lambda} \left[ \bint_B
                                                                                                                                    \big|\u^\frac{q+1}{2} - {[(\u^q )_{\widehat B}(t)]}^\frac{q+1}{2q}\big|^2  \, \d x \right]^{\frac{n}{n+2}} \, \d t \\\nonumber
&\leq \eps \, \sup_{t \in \Lambda} \lambda^{p-2} \bint_B \frac{\big|\u^\frac{q+1}{2} - \a^\frac{q+1}{2} \big|^2}{\rho^{q+1}} \, \d x \\\nonumber
&\phantom{+} + \frac{c \lambda^{p-2}}{\eps^{\frac{2}{n}} \rho^{q+1}}
                                                                                                                                                   \left( \bint_{\Lambda} \left[ \bint_B |\u^\frac{q+1}{2} - {[(u)_{B}(t)]}^\frac{q+1}{2}|^2 \, \d x	 \right]^{\frac{n}{n+2}} \, \d t \right)^\frac{n+2}{n} \\\nonumber
&=: \eps \, \sup_{t\in \Lambda} \lambda^{p-2} \bint_B \frac{\big|\u^\frac{q+1}{2} - \a^\frac{q+1}{2} \big|^2}{\rho^{q+1}} \, \d x + \frac{c \lambda^{p-2}}{\eps^{\frac{2}{n}} \rho^{q+1}} \mathrm{III}.
\end{align}
In the last inequality, we also used Young's inequality with exponents
$\frac{n+2}{2}$ and $\frac{n+2}{n}$. 
Observe that Lemma~\ref{lem:Acerbi-Fusco} and H\"older's inequality imply
\begin{align*}
\bint_B &|\u^\frac{q+1}{2} - {[(u)_{B}(t)]}^\frac{q+1}{2}|^2 \, \d x \\
&\leq c \bint_B \left( |u| + |(u)_{B}(t)| \right)^{q-1} |u - (u )_{B}(t)|^2 \, \d x \\
&\leq c \left( \bint_B |u|^{q+1} \, \d x \right)^\frac{q-1}{q+1} \left( \bint_B |u - (u)_{B}(t)|^{q+1} \, \d x \right)^\frac{2}{q+1}.
\end{align*}
By applying H\"older inequality in the time integral with exponents
$\frac{n+2}{n}\cdot\frac{q+1}{q-1}$ and $\frac{n+2}{2}\cdot\frac{q+1}{n+q+1}$ we obtain
\begin{align*}
\mathrm{III} \leq c \left( \biint_Q |u|^{q+1} \, \d x \d t
  \right)^{\frac{q-1}{q+1}}
  \left( \bint_{\Lambda} \left[ \bint_B |u - (u )_{B}(t)|^{q+1} \, \d x \right]^\frac{n}{n+q+1} \, \d t \right)^{\frac{2}{n}\frac{n+q+1}{q+1}}.
\end{align*}
By $\theta$-subintrinsic scaling 
$$
\left( \biint_Q |u|^{q+1} \, \d x \d t \right)^\frac{q-1}{q+1} \leq c \rho^{q-1} \theta^{\frac{2(q-1)}{q+1}},
$$
and by Sobolev inequality we have
\begin{align*}
  \left[ \bint_B |u - (u )_{B}(t)|^{q+1} \, \d x \right]^\frac{n}{n+q+1} 
  \leq
  c \left( \theta^\frac{1-q}{1+q} \rho \right)^\frac{n(q+1)}{n+q+1}\bint_B |Du|^\frac{n(q+1)}{n+q+1} \, \d x .
\end{align*}
We combine the estimates and obtain
\begin{align}\label{bound-III}
  \mathrm{III}
  \leq
  c \rho^{q+1} \left( \biint_{Q}
  |Du|^\frac{n(q+1)}{n+q+1} \, \d x \d t
  \right)^\frac{2(n+q+1)}{n(q+1)}
  \le
  c \rho^{q+1} \left( \biint_{Q}
  |Du|^{\nu p}\, \d x \d t \right)^\frac{2}{\nu p}.
\end{align}
The last estimate follows from H\"older's inequality, since $\nu p\ge\frac{n(q+1)}{n+q+1}$.
In the case $p<2$, we use the $\lambda$-subintrinsic
scaling \eqref{eq:lambda-intrinsic}$_1$ and H\"older's inequality,
which yields the bound
\begin{align*}
  \lambda\ge c\left( \biint_{Q} |Du|^{\nu p} \, \d x \d t
  \right)^\frac{1}{\nu p},
\end{align*}
while in the case $p\ge 2$, we use Young's
inequality. In both cases, we observe that~\eqref{bound-III} implies 
\begin{align*}
  \frac{c\lambda^{p-2}}{\eps^{\frac{2}{n}} \rho^{q+1}} \mathrm{III}
  \le
  \eps\lambda^p+c\eps^{-\beta}\left( \biint_{Q} |Du|^{\nu p} \, \d x \d t \right)^\frac{1}{\nu},
\end{align*}
where the term $\eps\lambda^p$ can be omitted in the case $p<2$.
Here and in the remainder of the proof, we write $\beta$ for a
positive universal constant that depends at most on $n,p$ and $q$.
Bounding the right-hand side by the $\lambda$-superintrinsic scaling
\eqref{eq:lambda-intrinsic}$_2$ and using the resulting estimate to
bound the right-hand side of~\eqref{bound-I} from above, we deduce
\begin{align}\label{bound-I-final}
\mathrm I &\leq c\eps \left(  \sup_{t\in \Lambda} \lambda^{p-2} \bint_B \frac{\big|\u^\frac{q+1}{2} - \a^\frac{q+1}{2}\big|^2}{\rho^{q+1}} \, \d x + \biint_Q |Du|^p \, \d x \d t \right) \\\nonumber
&\phantom{+} + c\eps^{-\beta} \left( \biint_Q |Du|^{\nu p} \, \d x \d t \right)^\frac{1}{\nu} + c\, \biint_Q |F|^p \, \d x \d t.
\end{align}

Then let us turn our attention to the term $\mathrm{II}$. We apply in
turn Lemma~\ref{lem:a-b} with $\alpha=\frac{2q}{q+1}\ge1$ and then 
Lemma~\ref{lem:gluing} to get
\begin{align}\label{bound-II-theta}
\mathrm{II} &\le c\lambda^{p-2} \bint_{\Lambda}
    \frac{\big| {(\u^q)_{\widehat B}}(t) -
              (\u^q)_{\widehat Q}\big|^{\frac{q+1}{q}}}{\rho^{q+1}} \, \d t\\\nonumber
  &\leq c \lambda^{p-2} \bint_{\Lambda} \bint_{\Lambda} \frac{\big|
    {(\u^q)_{\hat \rho}^{(\theta)}}(t) - (\u^q)_{\hat \rho}^{(\theta)}(\tau)\big|^\frac{q+1}{q}}{\rho^{q+1}} \, \d t \d \tau \\\nonumber
&\leq c \lambda^\frac{2-p}{q} \theta^\frac{q-1}{q} \left( \biint_Q |Du|^{p-1} + |F|^{p-1} \, \d x \d t \right)^\frac{q+1}{q}.
\end{align}
In the case~\eqref{eq:theta-intrinsic-q+1} we
estimate
\begin{align*}
  \theta^2
  &\leq c \biint_Q \frac{|\u^\frac{q+1}{2} - [(\u^q)_{\widehat
  Q}]^\frac{q+1}{2q}|^2}{\rho^{q+1}} \, \d x \d t + c
  \frac{|(\u^q)_{\widehat Q}|^\frac{q+1}{q}}{\rho^{q+1}}\\
  &\le
    c \biint_Q \frac{|\u^\frac{q+1}{2} - \a^\frac{q+1}{2}|^2}{\rho^{q+1}} \, \d x \d t + c
  \frac{|(\u^q)_{\widehat Q}|^\frac{q+1}{q}}{\rho^{q+1}},
\end{align*}
where we used Lemma~\ref{lem:uavetoa} with $\alpha=\frac{q+1}{2q}$ and
$p=2$ in the last step.
We use this to estimate
\begin{align*}
\mathrm{II}
  =
  \frac{\theta^{\frac{2(q-1)}{q+1}}}{\theta^{\frac{2(q-1)}{q+1}}}  \mathrm{II}
  \leq
  \mathrm{II}_1 + \mathrm{II}_2,
\end{align*}
where we denoted
\begin{align*}
\mathrm{II}_1 := \frac{c}{\theta^\frac{2(q-1)}{q+1}} \left[ \biint_Q
  \frac{\big|\u^\frac{q+1}{2} - \a^\frac{q+1}{2}\big|^2}{\rho^{q+1}} \, \d x \d t \right]^\frac{q-1}{q+1}  \cdot \mathrm{II}
\end{align*}
and
\begin{align*}
\mathrm{II}_2 := \frac{c|(\u^q)_{\widehat Q}|^\frac{q-1}{q}}{\theta^\frac{2(q-1)}{q+1} \rho^{q-1}} \cdot \mathrm{II}.
\end{align*}
For the estimate of $\mathrm{II}_1$, we use in turn~\eqref{bound-II-theta}, the
$\theta$-subintrinsic scaling and then Young's inequality with
exponents $\frac{2q}{q-1}$ and $\frac{2q}{q+1}$, with the result   
\begin{align*}
  \mathrm{II}_1
  &\leq c \lambda^\frac{2-p}{q} \theta^{-\frac{(q-1)^2}{q(q+1)}} \left[ \biint_Q
    \frac{\big|\u^\frac{q+1}{2} - \a^\frac{q+1}{2}\big|^2}{\rho^{q+1}} \, \d x \d t
    \right]^\frac{q-1}{q+1} \\\nonumber
  &\qquad\cdot\left[ \biint_Q |Du|^{p-1} + |F|^{p-1} \, \d x \d t \right]^\frac{q+1}{q} \\\nonumber
  &\leq c \left[ \lambda^{p-2}\biint_Q
    \frac{\big|\u^\frac{q+1}{2} - \a^\frac{q+1}{2}\big|^2}{\rho^{q+1}} \, \d x \d t
    \right]^\frac{q-1}{2q}\\\nonumber
  &\qquad\cdot
    \lambda^\frac{(2-p)(q+1)}{2q}\left[ \biint_Q |Du|^{p-1} + |F|^{p-1} \, \d x \d t \right]^\frac{q+1}{q} \\\nonumber
&\leq \frac{1}{2} \lambda^{p-2} \biint_Q \frac{\big|\u^\frac{q+1}{2} -\a^\frac{q+1}{2}\big|^2}{\rho^{q+1}} \, \d x \d t 
 + c \lambda^{2-p} \left[ \biint_Q |Du|^{p-1} + |F|^{p-1} \, \d x \d t \right]^2.
\end{align*}
Using the definition of $\mathrm{II}$ and Lemma~\ref{lem:a-b}, we also have 
\begin{align*}
  \mathrm{II}_2
  &\le
    \frac{c \lambda^{p-2}}{\theta^\frac{2(q-1)}{q+1}\rho^{2q}}
    \bint_{\Lambda} \big|(\u^q)_{\widehat B}(t) - (\u^q)_{\widehat Q}\big|^2 \, \d t\\\nonumber
  &\leq \frac{c \lambda^{p-2}}{\theta^\frac{2(q-1)}{q+1}\rho^{2q}} \bint_\Lambda\bint_\Lambda \big|(\u^q)_{\hat
                \rho}^{(\theta)}(t) - (\u^q)_{\hat
    \rho}^{(\theta)}(\tau)\big|^2  \, \d t \d \tau\\\nonumber
  &\leq c \lambda^{2-p} \left[ \biint_Q |Du|^{p-1} + |F|^{p-1} \, \d x \d t \right]^2.
\end{align*}
In the last step, we used Lemma~\ref{lem:gluing}.
We combine the two preceding estimates to
\begin{align}\label{bound-II-degenerate}
  \mathrm{II}
  &\le
    \frac1{2} \lambda^{p-2} \biint_Q \frac{\big|\u^\frac{q+1}{2} -\a^\frac{q+1}{2}\big|^2}{\rho^{q+1}} \, \d x \d t \\\nonumber
&\phantom{+} + c \lambda^{2-p} \left[ \biint_Q |Du|^{p-1} + |F|^{p-1} \, \d x \d t \right]^2.
\end{align}
In order to estimate the last term further, we distinguish
between the cases $p \geq 2$ and $p<2$. In the first case, we use the
$\lambda$-intrinsic scaling~\eqref{eq:lambda-intrinsic}, which implies
$$
  \lambda \geq c \left[ \biint_Q |Du|^{p-1} + |F|^{p-1} \, \d x \d t
  \right]^\frac{1}{p-1}.
$$
In the case $p<2$, we apply Young's inequality with exponents
$\frac{p}{2-p}$ and $\frac{p}{2(p-1)}$. In both cases, we deduce that~\eqref{bound-II-degenerate} implies
\begin{align}\label{bound-II-final}
\mathrm{II}
&\leq
\eps \lambda^p +
\frac12 \lambda^{p-2} \biint_Q
\frac{\big|\u^\frac{q+1}{2} -\a^\frac{q+1}{2}\big|^2}{\rho^{q+1}} \, \d x \d t\\\nonumber
&\quad+
c\eps^{-\beta} \left[ \biint_Q |Du|^{p-1} + |F|^{p-1} \, \d x \d t \right]^\frac{p}{p-1}
\end{align}
for every $\eps\in(0,1)$. This completes the estimate of $\mathrm{II}$
in the case~\eqref{eq:theta-intrinsic-q+1}. 
On the other hand, in the case~\eqref{eq:theta-degenerate} we have
$$
\theta^p
\leq
c \biint_Q |Du|^p + |F|^p \, \d x \d t
\le
c\lambda^p.
$$
In the last step we used~\eqref{eq:lambda-intrinsic}. 
Inserting this estimate into~\eqref{bound-II-theta}, we obtain
\begin{align*}
\mathrm{II} \leq c \lambda^\frac{q+1-p}{q} \left[ \biint_Q |Du|^{p-1} + |F|^{p-1} \, \d x \d t \right]^\frac{q+1}{q}.
\end{align*}
If $q+1>p$, we apply Young's inequality with exponents
$\frac{pq}{q+1-p}$ and $\frac{pq}{(p-1)(q+1)}$ and arrive at
\begin{align*}
  \mathrm{II} \leq \eps \lambda^p + c\eps^{-\beta} \left[ \biint_Q |Du|^{p-1} + |F|^{p-1} \, \d x \d t \right]^\frac{p}{p-1}.
\end{align*}
In the borderline case $q+1=p$, the same estimate is
immediate. Consequently, the bound~\eqref{bound-II-final} for $\mathrm{II}$ holds
true in every case considered in the lemma. Combining this with 
estimate~\eqref{bound-I-final} of $\mathrm{I}$ and recalling the
definition of $\mathrm{I}$ and $\mathrm{II}$ in~\eqref{def-I-II}, we
deduce
\begin{align*}
  &\lambda^{p-2} \biint_{Q}
  \frac{|\u^\frac{q+1}{2} - \a^\frac{q+1}{2}|^2}{\rho^{q+1}} \, \d x
    \d t \\\nonumber
  &\le
    \frac12 \lambda^{p-2} \biint_Q
\frac{\big|\u^\frac{q+1}{2} -\a^\frac{q+1}{2}\big|^2}{\rho^{q+1}} \,
    \d x \d t\\
   &\quad+
    c\eps \left(  \sup_{t\in \Lambda} \lambda^{p-2} \bint_B
    \frac{\big|\u^\frac{q+1}{2} - \a^\frac{q+1}{2}\big|^2}{\rho^{q+1}}
    \, \d x + \lambda^p+ \biint_Q |Du|^p \, \d x \d t \right) \\\nonumber
&\quad + c\eps^{-\beta} \left( \biint_Q |Du|^{\nu p} \, \d x \d t \right)^\frac{1}{\nu} + c\, \biint_Q |F|^p \, \d x \d t.
\end{align*}
We reabsorb the first term on the right-hand side into the
left-hand side and estimate the term $\lambda^p$ by the
$\lambda$-intrinsic scaling~\eqref{eq:lambda-intrinsic}. This yields
the asserted estimate after replacing $\eps$ by $\frac{\eps}{c}$. 
\end{proof}

Next, we give an auxiliary result that will be needed in the proof of
the second Sobolev-Poincar\'e inequality.

\begin{lemma} \label{lem:q+1-slicewise-estimate}
Let $q > 1$, $\frac{n(q+1)}{n+q+1} < p \leq q+1$ and
assume that $Q_{2\rho}^{(\lambda,\theta)}(z_o) \Subset \Omega_T$
and that the $\lambda$- and $\theta$-subintrinsic scaling
properties~\eqref{eq:lambda-intrinsic}$_1$
and~\eqref{eq:theta-intrinsic-q+1}$_1$ are satisfied. Then, there exists a constant
$c > 0$ depending on $n,p,q,C_\theta$ and $C_\lambda$ such that
for
every function $u\in L_{\mathrm{loc}}^p(0,T;W_{\mathrm{loc}}^{1,p}(\Omega,\R^N))\cap
L^\infty_{\mathrm{loc}}(0,T;L_{\mathrm{loc}}^{q+1}(\Omega,\R^N))$,
we have 
\begin{align*}
&\biint_{Q_\rho^{(\lambda,\theta)}(z_o)} \frac{\babs{u - \big[(\u^q)_{x_o;\hat\rho}^{(\theta)}\big]^{\frac{1}{q}}(t)}^{q+1}}{\rho^{q+1}} \, \d x \d t \\
&\ \ \leq c \bigg(\biint_{Q_\rho^{(\lambda,\theta)}(z_o)}|Du|^{\nu
                                                                                                                                                           p}\dx\dt\bigg)^{\frac{2(q+1)}{2(q+1)+\nu p(q-1)}} \\
&\qquad\cdot\left( \sup_{t\in \Lambda_\rho^{(\lambda)}(t_o)}\bint_{B_\rho^{(\theta)}(x_o)} \frac{|u-a|^{q+1}}{\rho^{q+1}} \, \d x \right)^{\frac{2(q+1-\nu p)}{2(q+1)+\nu p(q-1)}}    
\end{align*}
for every $\nu\in[\frac{n(q+1)}{p(n+q+1)},1]$, every
$\hat\rho\in[\frac\rho2,\rho]$ and every $a\in\R^N$. 
In particular, we have
\begin{align*}
\biint_{Q_\rho^{(\lambda,\theta)}(z_o)} &\frac{\babs{u - \big[(\u^q)_{x_o;\hat\rho}^{(\theta)}\big]^{\frac{1}{q}}(t)}^{q+1}}{\rho^{q+1}} \, \d x \d t\\
&\leq c \lambda^\frac{2(2(q+1)+ p(p-2))}{2(q+1) +p(q-1)} \left( \sup_{t\in \Lambda_\rho^{(\lambda)}(t_o)} \bint_{B_\rho^{(\theta)}(x_o)} \frac{|u-a|^{q+1}}{\lambda^{2-p}\rho^{q+1}} \, \d x \right)^\frac{2(q+1-p)}{2(q+1) + p(q-1)}.
\end{align*}
\end{lemma}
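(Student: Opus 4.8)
\emph{Strategy and Step 1 (reduction to a power of $u$).} At a.e.\ fixed time $t$ the left--hand side integrand is, up to the spatial integration, essentially the square of a spatial oscillation of the power $w:=\u^{\frac{q+1}{2}}$; the plan is to estimate this by a spatial Sobolev--Poincar\'e inequality for $w$ and then integrate in time. Fix $t$, write $B:=B_\rho^{(\theta)}(x_o)$, $\widehat B:=B_{\hat\rho}^{(\theta)}(x_o)$ (so $|B|\le 2^n|\widehat B|$), and $b(t):=\big[(\u^q)_{\widehat B}(t)\big]^{1/q}$, which satisfies $\mathbf b(t)^q=(\u^q)_{\widehat B}(t)$. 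First, Lemma~\ref{lem:a-b} with $\alpha=\frac{q+1}{2}\ge1$ gives pointwise $|u-b(t)|^{q+1}\le c\,\babs{w-\mathbf b(t)^{(q+1)/2}}^2$. Since $\big(\u^q\big)^{(q+1)/(2q)}=w$ and $\big((\u^q)_{\widehat B}(t)\big)^{(q+1)/(2q)}=\mathbf b(t)^{(q+1)/2}$, Lemma~\ref{lem:uavetoa} applied to the map $\u^q$ with $\alpha=\frac{q+1}{2q}$, $p=2$ and the pair $\widehat B\subset B$ lets one replace $\mathbf b(t)^{(q+1)/2}$ by the slicewise mean $(w)_B(t)$ at the cost of a universal factor, so that
\[
\bint_B|u-b(t)|^{q+1}\,\dx\le c\bint_B\babs{w-(w)_B(t)}^2\,\dx\qquad\text{for a.e.\ }t .
\]

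\emph{Step 2 (spatial Sobolev--Poincar\'e and H\"older).} Next I would apply a Sobolev--Poincar\'e inequality for $w$ on the ball $B$ of radius $\theta^{\frac{1-q}{1+q}}\rho$ with the exponent
\[
\nu':=\frac{2\nu p(q+1)}{2(q+1)+\nu p(q-1)},\qquad\text{characterised by}\qquad \frac1{\nu'}=\frac1{\nu p}+\frac{q-1}{2(q+1)} ,
\]
noting that $\nu'\ge\frac{2n}{n+2}$ if and only if $\nu p\ge\frac{n(q+1)}{n+q+1}$, so that the imposed lower bound on $\nu$ is exactly the admissibility condition for this step (the endpoint $\nu p=\frac{n(q+1)}{n+q+1}$ being critical). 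Combining this with the chain--rule bound $|Dw|\le c|u|^{(q-1)/2}|Du|$ and H\"older's inequality, with the conjugate exponents $\frac{\nu p}{\nu'}$ and $\frac{2(q+1)}{(q-1)\nu'}$ chosen so that $|Du|$ enters with power $\nu p$ — which then forces $|u|$ to enter with power $q+1$ — and keeping track of the radius $\theta^{\frac{1-q}{1+q}}\rho$, one obtains, for a.e.\ $t$,
\[
\bint_B\frac{|u-b(t)|^{q+1}}{\rho^{q+1}}\,\dx\le c\,\theta^{\frac{2(1-q)}{1+q}}\Big(\bint_B\frac{|u|^{q+1}}{\rho^{q+1}}\,\dx\Big)^{\frac{q-1}{q+1}}\Big(\bint_B|Du|^{\nu p}\,\dx\Big)^{\frac2{\nu p}} .
\]

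\emph{Step 3 (integration in time and scalings).} Integrating over $t\in\Lambda_\rho^{(\lambda)}(t_o)$, I would apply H\"older's inequality in $t$ and the slicewise bound $\bint_B\frac{|u|^{q+1}}{\rho^{q+1}}\le c\sup_t\bint_B\frac{|u-a|^{q+1}}{\rho^{q+1}}+c\frac{|a|^{q+1}}{\rho^{q+1}}$; since the left--hand side does not depend on $a$, one may pick $a$ conveniently, the term $\frac{|a|^{q+1}}{\rho^{q+1}}$ being then controlled through the $\theta$-subintrinsic scaling~\eqref{eq:theta-intrinsic-q+1}$_1$ and its contribution combining with the factor $\theta^{\frac{2(1-q)}{1+q}}$ so that the superfluous powers of $\theta$ cancel. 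The $\lambda$-subintrinsic scaling~\eqref{eq:lambda-intrinsic}$_1$, which bounds $\biint_Q|Du|^{\nu p}$ by $c\lambda^{\nu p}$, is used to pass from the exponent $\frac2{\nu p}$ on $\biint_Q|Du|^{\nu p}$ to the exponent $\frac{2(q+1)}{2(q+1)+\nu p(q-1)}$. As usual in this setting, the time integration splits into the cases $\nu p\ge2$ (Jensen's inequality) and $\nu p<2$ (Young's inequality together with~\eqref{eq:lambda-intrinsic}$_1$), mirroring the dichotomy $p\ge2$ versus $p<2$. Collecting all the powers of $\lambda,\theta,\rho$ yields the first asserted estimate.

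\emph{Step 4 (the ``in particular'' statement) and the main obstacle.} The second estimate follows by setting $\nu=1$ and invoking~\eqref{eq:lambda-intrinsic}$_1$ once more to substitute $\big(\biint_Q|Du|^p\big)^{\frac{2(q+1)}{2(q+1)+p(q-1)}}\le c\,\lambda^{\frac{2p(q+1)}{2(q+1)+p(q-1)}}$ and, after rewriting $\sup_t\bint_B\frac{|u-a|^{q+1}}{\rho^{q+1}}=\lambda^{2-p}\sup_t\bint_B\frac{|u-a|^{q+1}}{\lambda^{2-p}\rho^{q+1}}$, picking up an additional factor $\lambda^{(2-p)\frac{2(q+1-p)}{2(q+1)+p(q-1)}}$; a direct check shows the two exponents of $\lambda$ add up to $\frac{2(2(q+1)+p(p-2))}{2(q+1)+p(q-1)}$. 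The technical heart of the argument is Step~3: one must orchestrate the applications of the $\lambda$- and $\theta$-subintrinsic scalings together with the time--H\"older exponents so that all spurious powers of $\lambda,\theta,\rho$ cancel and the precise exponents $\frac{2(q+1)}{2(q+1)+\nu p(q-1)}$ and $\frac{2(q+1-\nu p)}{2(q+1)+\nu p(q-1)}$ emerge; the only genuinely geometric input is the critical Sobolev embedding at $\nu p=\frac{n(q+1)}{n+q+1}$ from Step~2, which fixes the admissible range of $\nu$.
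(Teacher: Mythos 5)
Your Steps 1--2 are sound (the reduction to $w=\u^{\frac{q+1}{2}}$ via Lemmas~\ref{lem:a-b} and~\ref{lem:uavetoa}, the chain rule, and the identification of the admissible range of $\nu$ with the critical Sobolev exponent are all correct), but Step~3 contains the actual content of the lemma and, as outlined, it does not work. Two distinct mechanisms are missing. First, because you use a pure Sobolev--Poincar\'e inequality in space, the gradient enters your slicewise bound with the exponent $\tfrac{2}{\nu p}$, and when $\nu p<2$ the time integral $\bint_\Lambda\big(\bint_B|Du|^{\nu p}\,\dx\big)^{2/\nu p}\dt$ cannot be reduced to $\big(\biint_Q|Du|^{\nu p}\big)^{2/\nu p}$: Jensen goes the wrong way, and ``Young plus~\eqref{eq:lambda-intrinsic}$_1$'' would only trade the excess power for powers of $\lambda$, which do not appear in the asserted estimate. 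The paper avoids this by applying the Gagliardo--Nirenberg inequality (Lemma~\ref{lem:GN}) with $(p,q,r,\vartheta)=(q+1,\nu p,q+1,\tfrac{\nu p}{q+1})$, so that the $W^{1,\nu p}$-bracket enters with exponent exactly $1$ (time integration is then linear) and the interpolation simultaneously produces the factor $\big(\sup_t\bint_B|u-(u)_B(t)|^{q+1}/\rho^{q+1}\big)^{1-\frac{\nu p}{q+1}}$ with the correct constant $(u)_B(t)$, which Lemma~\ref{lem:uavetoa} then converts to an arbitrary $a$ (your remark that one may ``pick $a$ conveniently'' is not available: the right-hand side depends on $a$ and the estimate is claimed for every $a$).

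Second, the characteristic denominators $2(q+1)+\nu p(q-1)$ in the asserted exponents cannot arise from any cancellation of powers of $\theta$ against the $|a|$-term; indeed $\sup_t\bint_B|u-a|^{q+1}/\rho^{q+1}$ is not bounded by $c\,\theta^2$, so the factor $\theta^{\frac{2(1-q)}{q+1}}\big(S+\theta^2\big)^{\frac{q-1}{q+1}}$ in your Step~3 does not reduce to a constant. The paper instead uses an \emph{absorption} argument: since the left-hand side $\mathrm{L}$ satisfies $\mathrm{L}\le c\,\theta^2$ (by Lemma~\ref{lem:uavetoa} and the $\theta$-subintrinsic scaling~\eqref{eq:theta-intrinsic-q+1}$_1$), the surviving factor $\theta^{-\nu p\frac{q-1}{q+1}}$ is bounded by $c\,\mathrm{L}^{-\frac{\nu p(q-1)}{2(q+1)}}$, giving $\mathrm{L}^{1+\frac{\nu p(q-1)}{2(q+1)}}\le c\,\biint_Q|Du|^{\nu p}\,\big(\sup\dots\big)^{\frac{q+1-\nu p}{q+1}}$, and raising to the power $\frac{2(q+1)}{2(q+1)+\nu p(q-1)}$ yields precisely the stated exponents. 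Without this step your route can at best produce a different inequality, with exponents $\tfrac{2}{\nu p}$ and $\tfrac{q-1}{q+1}$ and leftover powers of $\theta$, which would not add up to one in the Young-inequality applications of Lemmas~\ref{lem:sobo-poincare-p} and~\ref{lem:theta-absorb} where this lemma is used.
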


\begin{proof}
As in the preceding proof, we abbreviate $Q:=Q_\rho^{(\lambda,\theta)}(z_o)$,
$B:=B_\rho^{(\theta)}(x_o)$, $\widehat B:=B_{\hat\rho}^{(\theta)}(x_o)$ and
$\Lambda:=\Lambda_\rho^{(\lambda)}(t_o)$.
First, we apply Lemma~\ref{lem:uavetoa} with $\alpha=\frac1q$ and
$p=q+1$ to exchange the mean value of $\u^q$ by the mean value of $u$. 
Then, we note that the fact $\nu\ge\frac{n(q+1)}{p(n+q+1)}$ allows us
to use the
Gagliardo-Nirenberg inequality from Lemma~\ref{lem:GN} with the
parameters $(p,q,r,\vartheta)$ replaced by $(q+1,\nu p,q+1,\frac{\nu
  p}{q+1})$. Finally, we apply Poincar\'e's inequality slicewise. In this way, we obtain
\begin{align*}
  &\biint_{Q}\frac{\big|u-\big[(\u^q)_{\widehat
    B}\big]^{\frac1q}(t)\big|^{q+1}}{\rho^{q+1}} \, \d x \d t
    \le
  c\,\biint_{Q}\frac{|u-(u)_B(t)|^{q+1}}{\rho^{q+1}} \, \d x \d t\\
  &\quad\le
    c\theta^{1-q}
    \biint_Q \bigg[|D u|^{\nu p}+\frac{|u-(u)_B(t)|^{\nu p}}{\big(\theta^{\frac{1-q}{1+q}}\rho\big)^{\nu p}}\bigg] \d x \d t \\
    &\quad\phantom{=} \cdot 
    \Bigg(\sup_{t\in\Lambda}\bint_B\frac{|u-(u)_B(t)|^{q+1}}{\theta^{1-q}\rho^{q+1}}\dx\Bigg)^{1-\frac{\nu
    p}{q+1}}\\
  &\quad\le
    c\theta^{-\nu p\frac{q-1}{q+1}}
    \biint_Q |D u|^{\nu p} \d x \d t\ 
    \Bigg(\sup_{t\in\Lambda}\bint_B\frac{|u-a|^{q+1}}{\rho^{q+1}}\dx\Bigg)^{1-\frac{\nu
    p}{q+1}}.
\end{align*}
In the last step we applied Lemma~\ref{lem:uavetoa} again. 
We use assumption~\eqref{eq:theta-intrinsic-q+1}$_1$ in
order to bound the negative power of $\theta$ appearing on the
right-hand side from above. In this way, we obtain
\begin{align*}
  &\biint_{Q}\frac{\big|u-\big[(\u^q)_{\widehat B}\big]^{\frac1q}(t)\big|^{q+1}}{\rho^{q+1}}
  \, \d x \d t\\
  &\qquad\le
    c\bigg(\biint_{Q}\frac{\big|u-\big[(\u^q)_{\widehat B}\big]^{\frac1q}(t)\big|^{q+1}}{\rho^{q+1}}
    \, \d x \d t\bigg)^{-\frac{\nu p(q-1)}{2(q+1)}}\\
  &\qquad\qquad\cdot\biint_Q |D u|^{\nu p} \d x \d t\ 
    \Bigg(\sup_{t\in\Lambda}\bint_B\frac{|u-a|^{q+1}}{\rho^{q+1}}\dx\Bigg)^{\frac{q+1-\nu
    p}{q+1}}.
\end{align*}
By absorbing the first integral on the right-hand side into the left
and taking both sides to the power $\frac{2(q+1)}{2(q+1)+\nu p(q-1)}$,
we deduce the first asserted estimate. The second assertion follows by
choosing $\nu=1$ and using~\eqref{eq:lambda-intrinsic}$_1$. 
\end{proof}

Now we are in a position to prove a Sobolev-Poincar\'e inequality for
the first term on the right-hand side of the energy
estimate~\eqref{energy-estimate}.

\begin{lemma} \label{lem:sobo-poincare-p}
  Suppose that $q > 1$, $\frac{n(q+1)}{n+q+1}<p \leq q+1$,
  and that $u$ is a weak solution to~\eqref{eq:dne}, where 
  assumption~\eqref{assumption:A} is satisfied. Moreover, we consider a cylinder $Q_{2\rho}^{(\lambda,\theta)}(z_o) \Subset \Omega_T$ and assume that 
the $\lambda$-intrinsic coupling~\eqref{eq:lambda-intrinsic} and
additionally, property~\eqref{eq:theta-intrinsic-q+1} or
\eqref{eq:theta-degenerate} are satisfied. Then the following Sobolev-Poincar\'e inequality holds:
\begin{align*}
\theta^{p\frac{q-1}{q+1}} &\biint_{Q_\rho^{(\lambda,\theta)}(z_o)}\frac{| u - a |^p}{ \rho^{p} } \, \d x \d t \\
&\leq \eps \left( \sup_{t \in \Lambda_\rho^{(\lambda)}(t_o)} \lambda^{p-2} \bint_{B_\rho^{(\theta)}(x_o)} \frac{\babs{ \u^\frac{q+1}{2}(t)-\a^\frac{q+1}{2} }^2}{ \rho^{q+1} } \, \d x + \biint_{Q_\rho^{(\lambda,\theta)}(z_o)} |Du|^p \, \d x \d t \right) \\
&\phantom{+} + c \eps^{-\beta} \left[ \left( \biint_{Q_\rho^{(\lambda,\theta)}(z_o)} |Du|^{\nu p} \, \d x \d t  \right)^\frac{1}{\nu} + \biint_{Q_\rho^{(\lambda,\theta)}(z_o)} |F|^p \, \d x \d t  \right],
\end{align*}
where $\max \left\{\frac{n(q+1)}{p(n+q+1)} , \frac{p-1}{p},\frac{n}{n+2},\frac{n}{n+2}(1+\frac{2}{p}-\frac{2}{q}) \right\} \leq \nu \leq 1$ and $a = (u)_{z_o;\rho}^{(\theta,\lambda)}$. The preceding estimate holds for an arbitrary $\eps \in (0,1)$ with a constant $c = c(n,p,q,C_1,C_\theta,C_\lambda)> 0$ and $\beta = \beta(n,p,q)>0$.
\end{lemma}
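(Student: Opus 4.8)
The plan is to follow the same scheme as in the proof of Lemma~\ref{lem:sobo-poincare-q+1}: split the left-hand side by the triangle inequality into a slice-wise spatial oscillation of $u$ and a temporal oscillation of a slice-wise mean, feed the former into the parabolic Sobolev--Poincar\'e inequality at exponent $q+1$ that is already available in Lemma~\ref{lem:q+1-slicewise-estimate}, and control the latter by the gluing Lemma~\ref{lem:gluing}. Throughout I abbreviate $Q:=Q_\rho^{(\lambda,\theta)}(z_o)$, $B:=B_\rho^{(\theta)}(x_o)$, $\Lambda:=\Lambda_\rho^{(\lambda)}(t_o)$, $a:=(u)^{(\theta,\lambda)}_{z_o;\rho}$, and, with the radius $\hat\rho\in[\tfrac\rho2,\rho]$ from Lemma~\ref{lem:gluing}, $\widehat B:=B_{\hat\rho}^{(\theta)}(x_o)$ and $\widehat Q:=\widehat B\times\Lambda$. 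Since $q>1$, Lemma~\ref{lem:a-b} will be used repeatedly to pass between powers of $u$-differences and of $\u^{\frac{q+1}{2}}$- or $\u^q$-differences. The triangle inequality first gives
\begin{align*}
	\theta^{p\frac{q-1}{q+1}}\biint_Q\frac{|u-a|^p}{\rho^p}\, \d x \d t
	&\le c\,\theta^{p\frac{q-1}{q+1}}\biint_Q\frac{\big|u-\big[(\u^q)_{\widehat B}\big]^{\frac{1}{q}}(t)\big|^p}{\rho^p}\, \d x \d t\\
	&\quad+c\,\theta^{p\frac{q-1}{q+1}}\bint_\Lambda\frac{\big|\big[(\u^q)_{\widehat B}\big]^{\frac{1}{q}}(t)-a\big|^p}{\rho^p}\, \d t
	=:\mathrm I+\mathrm{II}.
\end{align*}

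\emph{The spatial term $\mathrm I$.} Using $p\le q+1$, I apply H\"older's inequality on the balls $B$ and Jensen's inequality in time to raise the exponent to $q+1$, so that $\mathrm I\le c\,\theta^{p\frac{q-1}{q+1}}\big(\biint_Q\rho^{-(q+1)}\big|u-[(\u^q)_{\widehat B}]^{\frac{1}{q}}(t)\big|^{q+1}\, \d x \d t\big)^{\frac{p}{q+1}}$; the space--time integral is exactly the quantity estimated in Lemma~\ref{lem:q+1-slicewise-estimate}. In the resulting supremum I use Lemma~\ref{lem:a-b} with $\alpha=\tfrac{q+1}2$ to replace $|u-a|^{q+1}$ by $\big|\u^{\frac{q+1}{2}}(t)-\a^{\frac{q+1}{2}}\big|^2$, which, after inserting the missing power of $\lambda$, turns the supremum into $c\,\lambda^{2-p}E$ with $E:=\sup_{t\in\Lambda}\lambda^{p-2}\bint_B\rho^{-(q+1)}\big|\u^{\frac{q+1}{2}}(t)-\a^{\frac{q+1}{2}}\big|^2\, \d x$. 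Young's inequality then splits off a term $\eps E$, leaving a product of powers of $\theta$, $\lambda$ and $\biint_Q|Du|^{\nu p}\, \d x \d t$; here I distinguish $p\ge2$ from $p<2$ to handle the surviving (resp.\ vanishing) positive power of $\lambda$. Using the $\lambda$-intrinsic scaling~\eqref{eq:lambda-intrinsic}, the bound $\nu\ge\frac{p-1}{p}$ whenever a power of $|Du|^{p-1}$ must be compared with $|Du|^{\nu p}$, and either $\theta$-intrinsicity~\eqref{eq:theta-intrinsic-q+1} (invoked by multiplying and dividing by a suitable power of $\theta$, exactly as for $\mathrm{II}_1,\mathrm{II}_2$ in the proof of Lemma~\ref{lem:sobo-poincare-q+1}) or the $\theta$-singular bound~\eqref{eq:theta-degenerate} (where $\theta\lesssim\lambda$ and the argument is immediate), this product is reduced to $\eps\big(E+\lambda^p\big)+c\,\eps^{-\beta}\big[(\biint_Q|Du|^{\nu p}\, \d x \d t)^{1/\nu}+\biint_Q|F|^p\, \d x \d t\big]$, possibly at the cost of one extra term $c\,\lambda^{p-2}\biint_Q\rho^{-(q+1)}\big|\u^{\frac{q+1}{2}}-\a^{\frac{q+1}{2}}\big|^2\, \d x \d t$ (dealt with below). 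The admissibility of the interpolation/Gagliardo--Nirenberg steps needed for this reduction is where the additional lower bounds $\frac{n}{n+2}$ and $\frac{n}{n+2}\big(1+\tfrac2p-\tfrac2q\big)$ on $\nu$ enter.

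\emph{The temporal term $\mathrm{II}$.} Lemma~\ref{lem:a-b} with $\alpha=\tfrac{q+1}2$ gives $\big|[(\u^q)_{\widehat B}]^{\frac{1}{q}}(t)-a\big|^p\le c\,\big|[(\u^q)_{\widehat B}]^{\frac{q+1}{2q}}(t)-\a^{\frac{q+1}{2}}\big|^{\frac{2p}{q+1}}$, and I split the constant $\a^{\frac{q+1}{2}}$ by inserting $[(\u^q)_{\widehat Q}]^{\frac{q+1}{2q}}$. The genuine time oscillation $\big|[(\u^q)_{\widehat B}]^{\frac{q+1}{2q}}(t)-[(\u^q)_{\widehat Q}]^{\frac{q+1}{2q}}\big|^{\frac{2p}{q+1}}$ is treated as the term $\mathrm{II}$ in the proof of Lemma~\ref{lem:sobo-poincare-q+1}: Lemma~\ref{lem:a-b} with $\alpha=\tfrac{2q}{q+1}$ reduces it to $\big|(\u^q)_{\widehat B}(t)-(\u^q)_{\widehat Q}\big|^{\frac{p}{q}}$, which is bounded uniformly in $t$ by Lemma~\ref{lem:gluing} (in the $\theta$-intrinsic case after the same multiply-and-divide manipulation, in which Lemma~\ref{lem:Acerbi-Fusco} makes the powers of $|(\u^q)_{\widehat Q}|$ cancel). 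For the remaining constant $\big|[(\u^q)_{\widehat Q}]^{\frac{q+1}{2q}}-\a^{\frac{q+1}{2}}\big|^{\frac{2p}{q+1}}$ I invoke Lemma~\ref{lem:uavetoa} with $\alpha=\tfrac{q+1}{2q}$, exponent $2$ and $|Q|\le c(n)|\widehat Q|$, which bounds $\big|[(\u^q)_{\widehat Q}]^{\frac{q+1}{2q}}-\a^{\frac{q+1}{2}}\big|^{2}$ by $c\biint_Q\rho^{-(q+1)}\big|\u^{\frac{q+1}{2}}-\a^{\frac{q+1}{2}}\big|^2\, \d x \d t$, hence by $c\,\lambda^{2-p}E$. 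One more application of Young's inequality and of the scalings then yields for $\mathrm{II}$ an estimate of the same form as the one obtained for $\mathrm I$; here too the cases $p\ge2$ and $p<2$ are treated separately, and the borderline $p=q+1$ is immediate since the relevant Young exponents degenerate there.

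Adding the estimates for $\mathrm I$ and $\mathrm{II}$, the only term not yet of the desired shape is $c\,\lambda^{p-2}\biint_Q\rho^{-(q+1)}\big|\u^{\frac{q+1}{2}}-\a^{\frac{q+1}{2}}\big|^2\, \d x \d t$, which is precisely the left-hand side of Lemma~\ref{lem:sobo-poincare-q+1}; since the hypotheses of that lemma coincide with the present ones, it bounds this term by $\eps\big(E+\biint_Q|Du|^p\, \d x \d t\big)+c\,\eps^{-\beta}\big[(\biint_Q|Du|^{\nu p}\, \d x \d t)^{1/\nu}+\biint_Q|F|^p\, \d x \d t\big]$. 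Finally, the contributions $\eps\lambda^p$ are bounded by the $\lambda$-intrinsic scaling~\eqref{eq:lambda-intrinsic}, and renaming $\eps\mapsto\eps/c$ gives the asserted inequality with $c=c(n,p,q,C_1,C_\theta,C_\lambda)$ and $\beta=\beta(n,p,q)$. The main obstacle is this last bookkeeping: keeping the competing powers of $\theta$ and $\lambda$ under control across the two cases~\eqref{eq:theta-intrinsic-q+1}/\eqref{eq:theta-degenerate} and across $p\ge2$ versus $p<2$, and checking that the listed lower bounds on $\nu$ are exactly those required for the H\"older and Gagliardo--Nirenberg (Lemma~\ref{lem:GN}) steps reducing the remaining products to the gradient exponent $\nu p$.
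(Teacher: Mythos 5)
Your overall skeleton (triangle-inequality split into a spatial and a temporal part, the gluing lemma for the latter, reduction of leftover oscillation terms to Lemma~\ref{lem:sobo-poincare-q+1}) matches the paper, and your argument does close in the $\theta$-singular case~\eqref{eq:theta-degenerate} (where $\theta\lesssim\lambda$) and, with the direct pairing $(|a|/\rho)^{p\frac{q-1}{2}}|u-a|^p\lesssim |\u^{\frac{q+1}{2}}-\a^{\frac{q+1}{2}}|^p\rho^{-p\frac{q-1}{2}}$, also when $p\le 2$. But there is a genuine gap in the hard case, namely~\eqref{eq:theta-intrinsic-q+1} with $p>2$, and it sits exactly at the step you describe as ``a product of powers of $\theta$, $\lambda$ and $\biint_Q|Du|^{\nu p}$'' to be handled ``by multiplying and dividing by a suitable power of $\theta$.'' After your H\"older step and Lemma~\ref{lem:q+1-slicewise-estimate}, the spatial term is bounded by
\[
\theta^{p\frac{q-1}{q+1}}\,G^{\frac{2\nu p}{D}}\,S^{\frac{2p(q+1-\nu p)}{(q+1)D}},\qquad
G:=\Big(\biint_Q|Du|^{\nu p}\Big)^{\frac1\nu},\quad S:=\sup_t\bint_B\frac{|u-a|^{q+1}}{\rho^{q+1}},\quad D:=2(q+1)+\nu p(q-1).
\]
In the $\theta$-intrinsic case $\theta$ admits no upper bound in terms of $\lambda$ or $G$, so the prefactor must be traded via the superintrinsic scaling $\theta^{2}\lesssim (|a|/\rho)^{q+1}+\biint_Q|u-a|^{q+1}/\rho^{q+1}$. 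The oscillation part can indeed be absorbed through Lemma~\ref{lem:sobo-poincare-q+1}, but the part $(|a|/\rho)^{\frac{p(q-1)}{2}}$ has nothing to pair with: matching it against $S$ via Lemma~\ref{lem:Acerbi-Fusco} would require the exponent of $S$ to equal $\frac{p}{q+1}$, whereas it equals $\frac{p}{q+1}\cdot\frac{2(q+1-\nu p)}{D}<\frac{p}{q+1}$; a strictly positive, uncontrollable power of $|a|/\rho$ survives (bounding it by the subintrinsic scaling just reintroduces $\theta$ and is circular). The analogy with $\mathrm{II}_1,\mathrm{II}_2$ in Lemma~\ref{lem:sobo-poincare-q+1} does not transfer, because there the $|(\u^q)_{\widehat Q}|$-factor cancels against a \emph{time} oscillation controlled by the gluing lemma, which has no counterpart for your spatial term.

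This is precisely why the paper's proof does not raise the exponent to $q+1$ in Case~3. Instead it keeps the exponent $p$, applies Gagliardo--Nirenberg with parameters $(\sigma,\nu p,2,\nu)$, $\sigma=\max\{p,q\}$ (this is where the hypotheses $\nu\ge\frac{n}{n+2}$ and $\nu\ge\frac{n}{n+2}(1+\frac2p-\frac2q)$ are actually needed --- in your route they would never be used, which is itself a warning sign), so that the resulting sup-term carries $|u-(u)_B(t)|^{2}$; then $(|a|/\rho)^{\frac{p(q-1)}{2}}$ is split as $(|a|/\rho)^{\frac{p\nu(q-1)}{2}}\cdot(|a|/\rho)^{\frac{p(1-\nu)(q-1)}{2}}$, the first factor cancelling against the negative power $\theta^{-\frac{p\nu(q-1)}{q+1}}$ produced by the slicewise Poincar\'e inequality on the $\theta$-scaled ball (via the subintrinsic bound $(|a|/\rho)^{q+1}\le c\theta^{2}$), and the second pairing exactly with $|u-(u)_B(t)|^{(1-\nu)p}$ through Lemma~\ref{lem:Acerbi-Fusco}. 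A similar exponent-matching issue affects your treatment of the constant term $\big|[(\u^q)_{\widehat Q}]^{\frac{q+1}{2q}}-\a^{\frac{q+1}{2}}\big|^{\frac{2p}{q+1}}$ in $\mathrm{II}$ when $p>2$, where $(\lambda^{2-p}L)^{p/2}$ with $p/2>1$ cannot be reabsorbed into $L$ without again invoking $\theta$. To repair the proof you would need to redo the spatial estimate at exponent $p$ along the lines just described rather than delegating it to Lemma~\ref{lem:q+1-slicewise-estimate}.
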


\begin{proof}
We continue to use the notations $Q,\widehat Q,B,\widehat B$ and
$\Lambda$ introduced in the preceding proofs.   
We begin with two easy cases, in which the assertion can be deduced
from Lemma~\ref{lem:sobo-poincare-q+1}.

\emph{Case 1: The $\theta$-singular case \eqref{eq:theta-degenerate}.}
  In this case, assumptions \eqref{eq:theta-degenerate}
  and~\eqref{eq:lambda-intrinsic} imply $\theta\le
  c\lambda$. Moreover, we use H\"older's inequality,
  Lemma~\ref{lem:a-b} with $\alpha=\frac{q+1}{2}$, and finally,
  Young's inequality with exponents $\frac{q+1}{q+1-p}$ and
  $\frac{q+1}{p}$. In this way, we obtain the bound 
  \begin{align*}
    \theta^{p\frac{q-1}{q+1}} \biint_{Q}\frac{|u-a |^p}{\rho^{p}}\,\d
    x\d t
    &\le
      c\lambda^{p\frac{q-1}{q+1}} \bigg(\biint_{Q}\frac{|u-a|^{q+1}}{\rho^{q+1}}\,\d
      x\d t\bigg)^{\frac{p}{q+1}}\\
    &\le
      c\lambda^{p\frac{q+1-p}{q+1}} \bigg(\lambda^{p-2}\biint_{Q}\frac{\big|\u^{\frac{q+1}{2}}-\a^{\frac{q+1}{2}}\big|^{2}}{\rho^{q+1}}\,\d
      x\d t\bigg)^{\frac{p}{q+1}}\\
    &\le
      \eps\lambda^p+c\eps^{-\beta}\lambda^{p-2}\biint_{Q}\frac{\big|\u^{\frac{q+1}{2}}-\a^{\frac{q+1}{2}}\big|^{2}}{\rho^{q+1}}\,\d
      x\d t. 
  \end{align*}
  Again, we write $\beta$ for a positive universal constant that
  depends at most on $n,p$ and $q$.
  At this stage, the claim follows by estimating the last term with
  the help of Lemma~\ref{lem:sobo-poincare-q+1}.

  \emph{Case 2: The $\theta$-intrinsic case~\eqref{eq:theta-intrinsic-q+1} with
  $p\le 2$.} 
  As a consequence of~\eqref{eq:theta-intrinsic-q+1} we have 
$$
\theta \leq c \left( \biint_{Q} \frac{|u-a|^{q+1}}{\rho^{q+1}} \, \d x \d t \right)^\frac{1}{2} + c \frac{|a|^\frac{q+1}{2}}{\rho^\frac{q+1}{2}}.
$$
Using this together with H\"older's inequality, we infer 
\begin{align*}
&\theta^\frac{p(q-1)}{q+1} \biint_{Q}   \frac{|u-a|^p}{\rho^p} \, \d x \d t \\
&\leq c \left( \biint_{Q}   \frac{|u-a |^{q+1}}{\rho^{q+1}} \, \d x \d t \right)^\frac{p}{2} + c \left( \frac{|a|}{\rho} \right)^\frac{p(q-1)}{2} \biint_{Q}   \frac{|u-a |^p}{\rho^p} \, \d x \d t.
\end{align*}
We estimate the first term on the right-hand side by
Lemma~\ref{lem:a-b} with $\alpha=\frac{q+1}{2}$ and the second term by
Lemma~\ref{lem:Acerbi-Fusco} with the same value of $\alpha$. In this
way we get
\begin{align*}
  &\theta^\frac{p(q-1)}{q+1} \biint_{Q}   \frac{|u-a|^p}{\rho^p} \, \d x\d t\\
  &\qquad\le
    c \left( \biint_{Q}
    \frac{\big|\u^\frac{q+1}{2}-\a^\frac{q+1}{2}\big|^2}{\rho^{q+1}}
    \, \d x \d t \right)^\frac{p}{2} +
    c \biint_{Q}
    \frac{\big|\u^\frac{q+1}{2}-\a^\frac{q+1}{2}\big|^p}{\rho^{\frac{p(q+1)}{2}}} \, \d
    x \d t\\
  &\qquad
    \le
    c\lambda^{\frac{(2-p)p}{2}} \left( \lambda^{p-2}\biint_{Q}
    \frac{\big|\u^\frac{q+1}{2}-\a^\frac{q+1}{2}\big|^2}{\rho^{q+1}}
    \, \d x \d t \right)^\frac{p}{2}.
\end{align*}
The last estimate follows from H\"older's inequality, since $p\le2$. 
If $p < 2$ we may directly use Young's inequality with exponents
$\frac{2}{2-p}$ and $\frac2p$, which results in the estimate
\begin{align*}
  \theta^\frac{p(q-1)}{q+1} \biint_{Q}   \frac{|u-a|^p}{\rho^p} \, \d x\d t
  &\le
    \eps\lambda^p
    +
    c\eps^{-\beta}\lambda^{p-2}\biint_{Q}
    \frac{\big|\u^\frac{q+1}{2}-\a^\frac{q+1}{2}\big|^2}{\rho^{q+1}}\,\d x\d t
\end{align*}
for every $\eps\in(0,1)$. 
In the case $p=2$, this is an immediate consequence of the preceding inequality.
Now, the asserted estimate again follows by applying
Lemma~\ref{lem:sobo-poincare-q+1} to the last integral.

Now we turn our attention to the final case, which turns out to be
much more involved.

\emph{Case 3: The $\theta$-intrinsic case~\eqref{eq:theta-intrinsic-q+1} with $p>2$.}
By using triangle inequality and Lemma~\ref{lem:uavetoa} with $\alpha=1$, we write
\begin{align*}
\theta^{p\frac{q-1}{q+1}}
  \biint_{Q}\frac{| u - a |^p}{ \rho^{p}} \, \d x \d t
  &\leq c\theta^{p\frac{q-1}{q+1}}
        \biint_{Q}\frac{\big| u -
        \big[(\u^q)_{\widehat B}\big]^\frac{1}{q}(t)\big|^p}{ \rho^{p} } \, \d x \d t \\
  &\phantom{+}
    +c\frac{\theta^{2p\frac{q-1}{q+1}}}{\theta^{p\frac{q-1}{q+1}}}
    \bint_{\Lambda}\frac{\big|\big[(\u^q)_{\widehat B}\big]^\frac{1}{q}(t)
    - \big[(\u^q)_{\widehat Q}\big]^\frac{1}{q}\big|^p}{ \rho^{p} } \,  \d t \\
&=: \mathrm{I} + \mathrm{II}.
\end{align*}
The $\theta$-superintrinsic scaling~\eqref{eq:theta-intrinsic-q+1}$_2$ implies
\begin{align*}
  \theta^2
  \le
  c\bigg(\frac{|a|}{\rho}\bigg)^{q+1}
  +
  c\biint_{Q} \frac{|u-a|^{q+1}}{\rho^{q+1}} \, \d x \d t.
\end{align*}
We use this to estimate the term $\mathrm{I}$ and twice apply H\"older's
inequality in the space integral, denoting $\sigma =
\max\{p,q\}$. Afterwards,  we apply Lemma~\ref{lem:uavetoa}, once with
$\alpha=\frac1q$ and $p=\sigma$, and once with $\alpha=\frac1q$ and $p=q+1$. Note
that in particular the first application is possible since $\sigma\ge
q$. This procedure leads to the estimate
\begin{align*}
\mathrm I &\leq \left( \frac{|a|}{\rho} \right)^{p\frac{q-1}{2}} \bint_{\Lambda} \left( \bint_{B} \frac{|u - (u)_B (t)|^\sigma}{\rho^\sigma} \, \d x \right)^\frac{p}{\sigma} \, \d t \\
&\phantom{+} + \left(\biint_{Q} \frac{|u-a|^{q+1}}{\rho^{q+1}} \, \d x \d t\right)^{\frac{p}{2} \frac{q-1}{q+1}} \bint_{\Lambda} \left( \bint_{B} \frac{|u-(u)_B(t)|^{q+1}}{\rho^{q+1}} \, \d x \right)^\frac{p}{q+1} \, \d t \\
&=: \mathrm{I}_1  + \mathrm{I}_2.
\end{align*}
By using Lemma~\ref{lem:GN} with $(p,q,r,\vartheta)$ replaced by
$(\sigma,\nu p,2,\nu)$, which is possible since $\nu \ge \frac{n}{n+2}
\max \left\{ 1, 1+\frac{2}{p} - \frac{2}{q} \right\}$, we have
\begin{align}\label{first-bound-I1}
  \mathrm{I}_1
  \leq
  c\left(\frac{|a|}{\rho}\right)^{p\frac{q-1}{2}}
  \theta^{-p \frac{q-1}{q+1}}&\biint_{Q} \Bigg[|D u|^{\nu p} + \frac{ \babs{ u - (u)_{B}(t)}^{\nu p}}{\left( \theta^{\frac{1-q}{1+q}} \rho \right)^{\nu p}}\Bigg] \, \d x \d t \\\nonumber
&\phantom{+}\cdot\left(\sup_{t \in\Lambda}\bint_{B}\frac{\babs{u-(u)_{B}(t)}^{2}}{\left(\theta^{\frac{1-q}{1+q}}\rho\right)^{2}}\,\d x \right)^\frac{(1-\nu)p}{2}. 
\end{align}
In the next step, we use Poincar\'e's inequality slice-wise and
rearrange the terms. Then, we note that the
$\theta$-subintrinsic scaling~\eqref{eq:theta-intrinsic-q+1}$_1$
implies 
$(\frac{|a|}{\rho})^{q+1}\le c\theta^2$.
For the estimate of the $\sup$-term, we use Lemma~\ref{lem:uavetoa} with $\alpha=1$ and $p=2$, and
then Lemma~\ref{lem:Acerbi-Fusco} with the parameter
$\alpha=\frac{q+1}{2}$. This leads to the estimate 
\begin{align*}
\mathrm{I}_1 &\leq c \left( \frac{|a|}{\rho} \right)^{p \nu\frac{q-1}{2}} \!\theta^{-p\nu \frac{q-1}{q+1}} \biint_{Q} |D u|^{\nu p} \d x \d t \left( \sup_{t \in \Lambda} \bint_{B} \frac{ |a|^{q-1}\babs{ u - (u)_{B}(t)}^{2}}{ \rho^{q+1}}\d x \right)^\frac{(1-\nu)p}{2} \\
&\leq c\lambda^\frac{ (2-p)(1-\nu)p}{2} \biint_{Q} |Du|^{\nu p}  \d x \d t \left( \sup_{t \in \Lambda} \bint_{B} \frac{ \babs{ \u^\frac{q+1}{2} - \a^\frac{q+1}{2}}^{2}}{ \lambda^{2-p} \rho^{q+1}}  \d x \right)^\frac{(1-\nu )p}{2}.
\end{align*}
  Since $\nu\ge\frac{p-1}{p}$, we may use 
  Young's inequality with exponents $\frac{2}{(1-\nu)p}$ and
  $\frac{2}{2 - (1-\nu)p}$ to get 
\begin{align*}
\mathrm{I}_1 &\leq \eps \sup_{t \in \Lambda} \bint_{B} \frac{ \babs{ \u^\frac{q+1}{2} - \a^\frac{q+1}{2}}^{2}}{ \lambda^{2-p} \rho^{q+1}} \, \d x +  c\eps^{-\beta} \left( \lambda^\frac{(2-p)(1-\nu)p}{2} \biint_{Q} |D u|^{\nu p} \, \d x \d t \right)^\frac{2}{2-(1-\nu)p}.
\end{align*}
By using the $\lambda$-subintrinsic
scaling~\eqref{eq:lambda-intrinsic}$_1$, which implies
\begin{equation}\label{lambda-subintrinsic}
  \lambda \geq c\left( \biint_{Q} |D u|^{\nu p} \, \d x \d t
  \right)^\frac{1}{\nu p},
\end{equation}
together with the fact $p>2$, we arrive at the estimate
\begin{align}\label{bound-I-1}
  \mathrm{I}_1
  \le
  \eps \sup_{t \in \Lambda} \bint_{B} \frac{ \babs{ \u^\frac{q+1}{2} - \a^\frac{q+1}{2}}^{2}}{ \lambda^{2-p} \rho^{q+1}} \, \d x +  c\eps^{-\beta} \left( \biint_{Q} |D u|^{\nu p} \, \d x \d t \right)^\frac{1}{\nu}.
\end{align}
Next, we estimate the term $\mathrm{I}_2$. Since
$p > \frac{n(q+1)}{n+q+1}$, the Sobolev-Poincar\'e inequality implies
\begin{align}\label{I2-sobolev}
  &\bint_{\Lambda} \left( \bint_{B}\frac{|u-(u)_B(t)|^{q+1}}{\rho^{q+1}} \,\d x\right)^\frac{p}{q+1} \, \d t\\\nonumber
  &\qquad\le
    c\,\theta^{-p\frac{q-1}{q+1}}\biint_Q |Du|^p\dx\dt
    \le
    c\,\theta^{-p\frac{q-1}{q+1}}\lambda^p.
\end{align}
In the last step, we used \eqref{eq:lambda-intrinsic}. Furthermore,
since $Q$ is $\theta$-subintrinsic in the sense of~\eqref{eq:theta-intrinsic-q+1}$_1$, we have 
\begin{align*}
  &\bint_{\Lambda} \left( \bint_{B}\frac{|u-(u)_B(t)|^{q+1}}{\rho^{q+1}} \, \d x
  \right)^\frac{p}{q+1} \, \d t\\
  &\qquad\le
  c\bigg(\biint_Q\frac{|u|^{q+1}}{\rho^{q+1}}\dx\dt\bigg)^{\frac{p}{q+1}\frac{q-1}{q+1}}
  \bigg(\bint_{\Lambda} \left( \bint_{B}\frac{|u-(u)_B(t)|^{q+1}}{\rho^{q+1}} \, \d x
  \right)^\frac{p}{q+1} \, \d t\bigg)^{\frac{2}{q+1}}\\
  &\qquad\le
  c\theta^{\frac{2p}{q+1}\frac{q-1}{q+1}}
  \bigg(\bint_{\Lambda} \left( \bint_{B}\frac{|u-(u)_B(t)|^{q+1}}{\rho^{q+1}} \, \d x
  \right)^\frac{p}{q+1} \, \d t\bigg)^{\frac{2}{q+1}}.
\end{align*}
Estimating the right-hand side by~\eqref{I2-sobolev}, we observe that
the powers of $\theta$ cancel each other out. Therefore, we obtain the bound 
\begin{equation}\label{I2-lambda}
  \bint_{\Lambda} \left( \bint_{B}\frac{|u-(u)_B(t)|^{q+1}}{\rho^{q+1}} \, \d x
    \right)^\frac{p}{q+1} \, \d t
  \le
  c\lambda^{\frac{2p}{q+1}}.
\end{equation}
In order to estimate $\mathrm{I}_2$, we apply the triangle inequality
and use~\eqref{I2-lambda} in the first of the resulting terms and
\eqref{I2-sobolev} in the second. This leads to the bound 
\begin{align*}
  \mathrm{I}_2
  &\le
    c\left(\biint_{Q}
    \frac{\big|u-\big[(\u^q)_{\widehat B}\big]^{\frac1q}(t)\big|^{q+1}}{\rho^{q+1}} \, \d x \d
    t\right)^{\frac{p}{2} \frac{q-1}{q+1}}
    \lambda^{\frac{2p}{q+1}} \\
  &\quad+
    c\left(\biint_{Q}
    \frac{\big|\big[(\u^q)_{\widehat
    B}\big]^{\frac1q}(t)-\big[(\u^q)_{\widehat Q}\big]^\frac{1}{q}\big|^{q+1}}{\rho^{q+1}} \, \d x \d
    t\right)^{\frac{p}{2} \frac{q-1}{q+1}}
    \theta^{-p\frac{q-1}{q+1}}\lambda^p\\
  &=:
    \mathrm{I}_{2,1}+\mathrm{I}_{2,2}.
\end{align*}
For the estimate of the first term, we use Young's inequality with
exponents $\frac{q+1}{q-1}$ and $\frac{q+1}{2}$ and then
Lemma~\ref{lem:q+1-slicewise-estimate}, which yields the bound
\begin{align*}
  \mathrm{I}_{2,1}
  &\le
    \eps\lambda^p+c\eps^{-\beta} \left(\biint_{Q}
    \frac{\big|u-\big[(\u^q)_{\widehat B}\big]^{\frac1q}(t)\big|^{q+1}}{\rho^{q+1}} \, \d x \d
    t\right)^{\frac{p}{2}}\\
  &\le
    \eps\lambda^p +
    c\eps^{-\beta} \left( \sup_{t\in\Lambda}\bint_{B}
    \frac{|u-a|^{q+1}}{\lambda^{2-p}\rho^{q+1}} \, \d x \right)^{\frac{p(q+1-\nu p)}{2(q+1)+\nu p(q-1)}}\\
  &\qquad\qquad\quad\cdot\lambda^{(2-p)\frac{p(q+1-\nu p)}{2(q+1)+\nu
    p(q-1)}}\bigg(\biint_{Q}|Du|^{\nu p}\dx\dt\bigg)^{\frac{p(q+1)}{2(q+1)+\nu p(q-1)}} .
\end{align*}
Since $2<p\le q+1$, the power of $\lambda$ in the last line is
negative. Therefore, we can use the $\lambda$-subintrinsic scaling~\eqref{eq:lambda-intrinsic}$_1$
in the form of~\eqref{lambda-subintrinsic} to estimate the power of $\lambda$ from above. This leads to the bound
\begin{align*}
  \mathrm{I}_{2,1}
  &\le
    \eps\lambda^p
    +c\eps^{-\beta} \left( \sup_{t\in \Lambda}\bint_{B} \frac{|u-a|^{q+1}}{\lambda^{2-p}\rho^{q+1}} \, \d x \right)^{\frac{p(q+1-\nu p)}{2(q+1)+\nu p(q-1)}}\\
  &\qquad\qquad\quad \cdot\Bigg(\bigg[\biint_{Q}|Du|^{\nu p}\dx\dt\bigg]^{\frac{1}{\nu}}\Bigg)^\frac{2(q+1)+\nu
    p(q-1)-p(q+1-\nu p)}{2(q+1)+\nu p(q-1)}.
\end{align*}
Since $\nu p\ge p-1>p-2$, the exponent of the $\sup$-term is smaller than
one, and it is positive. Moreover, both exponents outside the round brackets add up to one. Therefore, another application of Young's inequality yields
\begin{align}\label{bound-I-21}
  \mathrm{I}_{2,1}
  \le
    \eps\lambda^p
    +
    \eps\sup_{t\in \Lambda}\bint_{B}
    \frac{|u-a|^{q+1}}{\lambda^{2-p}\rho^{q+1}} \, \d x
    +
    c\eps^{-\beta} \bigg(\biint_{Q}|Du|^{\nu p}\dx\dt\bigg)^{\frac1\nu}.
\end{align}
For the estimate of $\mathrm{I}_{2,2}$, we use Lemma~\ref{lem:a-b}
with $\alpha=q$ and then Lemma~\ref{lem:gluing}, which implies
\begin{align}\label{first-bound-I-22}
  \mathrm{I}_{2,2}
  &\le c\Bigg(\bint_{\Lambda}
    \frac{\big| {(\u^q)_{\widehat B}}(t) -
              (\u^q)_{\widehat Q}\big|^{\frac{q+1}{q}}}{\rho^{q+1}} \, \d t\Bigg)^{\frac{p}{2}\frac{q-1}{q+1}}
    \theta^{-p\frac{q-1}{q+1}}\lambda^p\\\nonumber
  &\le c\Bigg(\bint_{\Lambda} \bint_{\Lambda} \frac{\big|
    {(\u^q)_{\hat \rho}^{(\theta)}}(t) - (\u^q)_{\hat \rho}^{(\theta)}(\tau)\big|^\frac{q+1}{q}}{\rho^{q+1}} \, \d t \d \tau \Bigg)^{\frac{p}{2}\frac{q-1}{q+1}}
    \theta^{-p\frac{q-1}{q+1}}\lambda^p\\\nonumber
  &\le
    c\bigg(\lambda^{2-p}\theta^{\frac{q-1}{q+1}}\biint_Q|Du|^{p-1}+|F|^{p-1}\dx\dt\bigg)^{\frac{p}{2}\frac{q-1}{q}}
    \theta^{-p\frac{q-1}{q+1}}\lambda^p\\\nonumber
  &=
    c\theta^{-p\frac{q-1}{2q}}\lambda^{p\frac{2q+(2-p)(q-1)}{2q}}\bigg(\biint_Q|Du|^{p-1}+|F|^{p-1}\dx\dt\bigg)^{\frac{p(q-1)}{2q}}.
\end{align}
Note that we can assume
\begin{align*}
  \biint_Q|Du|^{p-1}+|F|^{p-1}\dx\dt\le\theta^{p-1}
\end{align*}
since otherwise, the assertion of the lemma clearly holds,
because~\eqref{eq:theta-intrinsic-q+1}$_1$ implies that the left-hand side of the
asserted estimate is bounded by $c\theta^p$. Using this observation in order to
bound the negative powers of $\theta$ in the preceding estimate, we
arrive at
\begin{align*}
  \mathrm{I}_{2,2}
  &\le
    c\lambda^{p\frac{2q+(2-p)(q-1)}{2q}}\bigg(\biint_Q|Du|^{p-1}+|F|^{p-1}\dx\dt\bigg)^{\frac{p(q-1)}{2q}\frac{p-2}{p-1}}.
\end{align*}
In case $2q + (2-p)(q-1) < 0$, we use the $\lambda$-subintrinsic scaling~\eqref{eq:lambda-intrinsic}$_1$ and obtain
$$
\mathrm{I}_{2,2} \leq c\bigg(\biint_Q|Du|^{p-1}+|F|^{p-1}\dx\dt\bigg)^{\frac{p}{p-1}}.
$$
If $2q + (2-p)(q-1) = 0$, this estimate is identical to the preceding
one. 
In the remaining case, by observing that $\frac{2q + (2-p)(q-1)}{2q} < 1$, we use Young's inequality with exponents $\frac{2q}{2q + (2-p)(q-1)}$ and $\frac{2q}{(p-2)(q-1)}$ to obtain
$$
\mathrm{I}_{2,2} \leq \eps \lambda^p + c\eps^{-\beta} \bigg(\biint_Q|Du|^{p-1}+|F|^{p-1}\dx\dt\bigg)^{\frac{p}{p-1}},
$$
completing the treatment of the term $\mathrm{I}_{2,2}$. Combining
this result with~\eqref{bound-I-1} and \eqref{bound-I-21}, using
H\"older's inequality and Lemma~\ref{lem:a-b}, we infer the bound
\begin{align}\label{final-bound-I}
  \mathrm{I}
  &\le
  \eps\lambda^p+c\eps \sup_{t \in \Lambda} \bint_{B} \frac{ \babs{
  \u^\frac{q+1}{2} - \a^\frac{q+1}{2}}^{2}}{ \lambda^{2-p} \rho^{q+1}}
  \, \d x\\\nonumber
  &\qquad+  c\eps^{-\beta} \left( \biint_{Q} |D u|^{\nu p} \, \d x \d t
  \right)^\frac{1}{\nu}
  +c\eps^{-\beta}\biint_Q|F|^p\dx\dt.
\end{align}
By the $\theta$-superintrinsic
scaling~\eqref{eq:theta-intrinsic-q+1}$_2$ we have
\begin{align*}
  \theta^2
  &\le
  c\left( \frac{|\hat a|}{\rho} \right)^{q+1}
  +
  c \biint_{Q} \frac{\big|u - \big[(\u^q)_{\widehat B}\big]^\frac{1}{q}(t)\big|^{q+1}}{\rho^{q+1}} \, \d x \d t\\ 
  &\qquad+
  c \mint_{\Lambda} \frac{\big|\big[(\u^q)_{\widehat B}\big]^\frac{1}{q}(t) - \hat a\big|^{q+1}}{\rho^{q+1}} \, \d t,
\end{align*}
where we abbreviated $\hat a = [(\u^q)_{\widehat
  Q}]^\frac{1}{q}$. Using this for the estimate of $\mathrm{II}$, we
obtain 
\begin{align*}
\mathrm{II} 
&\leq c \theta^{-p \frac{q-1}{q+1}}\left( \frac{|\hat a|}{\rho} \right)^{(q-1)p}\bint_{\Lambda}\frac{\big| \big[(\u^q)_{\widehat B}\big]^\frac{1}{q}(t) - \hat a \big|^p}{ \rho^{p} } \,  \d t \\
&\phantom{+} +c\theta^{-p \frac{q-1}{q+1}}\left( \biint_{Q} \frac{\big|u - \big[(\u^q)_{\widehat B}\big]^\frac{1}{q}(t)\big|^{q+1}}{\rho^{q+1}} \, \d x \d t \right)^{p \frac{q-1}{q+1}}\bint_{\Lambda}\frac{\big| \big[(\u^q)_{\widehat B}\big]^\frac{1}{q}(t) - \hat a \big|^p}{ \rho^{p} } \,  \d t \\
&\phantom{+} +c\theta^{-p \frac{q-1}{q+1}}\left( \bint_{\Lambda} \frac{\big|\big[(\u^q)_{\widehat B}\big]^\frac{1}{q}(t) - \hat a\big|^{q+1}}{\rho^{q+1}} \, \d t \right)^{p \frac{q-1}{q+1}}\bint_{\Lambda}\frac{\big| \big[(\u^q)_{\widehat B}\big]^\frac{1}{q}(t) - \hat a \big|^p}{ \rho^{p} } \,  \d t \\
&=: \mathrm{II}_1 + \mathrm{II}_2 + \mathrm{II}_3.
\end{align*}
For the first term, we use in turn Lemma~\ref{lem:Acerbi-Fusco} with
$\alpha=q$, the gluing lemma (Lemma~\ref{lem:gluing}), the
$\lambda$-subintrinsic scaling~\eqref{eq:lambda-intrinsic}$_1$ and
then H\"older's inequality to get
\begin{align}\label{first-bound-II-1}
\mathrm{II}_1 &\leq c \theta^{- p \frac{q-1}{q+1}} \bint_{\Lambda}
                \frac{|(\u^q)_{\widehat B}(t) - \hat \a^q|^p}{\rho^{qp}} \, \d t
  \\\nonumber&\le
  c \theta^{- p \frac{q-1}{q+1}} \bint_{\Lambda} \bint_{\Lambda} \frac{\big|
    {(\u^q)_{\hat \rho}^{(\theta)}}(t) - (\u^q)_{\hat \rho}^{(\theta)}(\tau)\big|^{p}}{\rho^{pq}} \, \d t \d \tau\\\nonumber
&\leq c \lambda^{p(2-p)} \left( \biint_{Q} |Du|^{p-1} + |F|^{p-1} \, \d x \d t  \right)^p \\\nonumber
&\leq c \left( \biint_{Q} |Du|^{p-1} + |F|^{p-1} \, \d x \d t  \right)^\frac{p}{p-1} \\\nonumber
&\leq c \left( \biint_{Q} |Du|^{\nu p} \, \d x \d t \right)^\frac{1}{\nu} + c \biint_{Q} |F|^p \, \d x \d t . 
\end{align}
For the term $\mathrm{II}_3$ we use Lemma~\ref{lem:a-b} with
$\alpha=q$ and then H\"older's inequality to estimate 
\begin{align*}
  \mathrm{II}_3
  &\leq c \theta^{- p \frac{q-1}{q+1}} \bigg(\bint_{\Lambda}
    \frac{|(\u^q)_{\widehat B}(t) - \hat \a^q|^{\frac{q+1}{q}}}{\rho^{q+1}} \, \d
    t\bigg)^{p\frac{q-1}{q+1}}\bint_{\Lambda} \frac{|(\u^q)_{\widehat B}(t) -
    \hat \a^q|^{\frac{p}{q}}}{\rho^{p}} \, \d t\\
 &\leq c \theta^{- p \frac{q-1}{q+1}} \bint_{\Lambda}
                \frac{|(\u^q)_{\widehat B}(t) - \hat \a^q|^p}{\rho^{qp}} \, \d t,
\end{align*}
by using also the fact $\frac{q+1}{q}\le 2<p$. Now we proceed
exactly as for the estimate of $\mathrm{II}_1$ and arrive at the bound
\begin{align*}
  \mathrm{II}_3
  \leq c \left( \biint_{Q} |Du|^{\nu p} \, \d x \d t \right)^\frac{1}{\nu} + c \biint_{Q} |F|^p \, \d x \d t.
\end{align*}
For the term $\mathrm{II}_2$, we divide the power of the second term
as $p \frac{q-1}{q+1} = \frac{p(q-1)^2}{2q(q+1)}+\frac{p(q-1)}{2q}$ and
estimate the first part using the $\theta$-subintrinsic
scaling~\eqref{eq:theta-intrinsic-q+1}$_1$. For the last integral in $\mathrm{II}_2$, we
apply Lemma~\ref{lem:a-b} with $\alpha=q$. The
resulting integrals are then estimated by Lemma~\ref{lem:q+1-slicewise-estimate} 
and Lemma~\ref{lem:gluing}, respectively.  This yields
 \begin{align*}
   \mathrm{II}_2
   &\le
   c\theta^{-\frac{p(q-1)}{q(q+1)}}\left( \biint_{Q} \frac{\big|u -\big[(\u^q)_{\widehat B}\big]^{\frac{1}{q}}(t)\big|^{q+1}}{\rho^{q+1}} \,
      \d x \d t \right)^{\frac{p(q-1)}{2q}}\bint_{\Lambda}\frac{|(\u^q)_{\widehat B}(t) - \hat \a^q |^{\frac{p}{q}}}{ \rho^{p} } \,\d t\\
  &\leq c \theta^{-\frac{p(q-1)}{q(q+1)}} \left( \lambda^\frac{2(2(q+1)+ p(p-2))}{2(q+1) +p(q-1)} \left( \sup_{t\in\Lambda} \bint_{B} \frac{|u-a|^{q+1}}{\lambda^{2-p}\rho^{q+1}} \, \d x \right)^\frac{2(q+1-p)}{2(q+1) + p(q-1)} \right)^\frac{p(q-1)}{2q} \\
&\phantom{+} \cdot \left( \lambda^{2-p} \theta^\frac{q-1}{q+1} \biint_Q |Du|^{p-1} + |F|^{p-1} \, \d x \d t \right)^\frac{p}{q}.
 \end{align*}
Observe that $\theta$ will cancel out on the right-hand side. Subsequently, we use Young's inequality with exponents $q$ and $\frac{q}{q-1}$ and obtain
\begin{align*}
\mathrm{II}_2 &\leq \eps \lambda^{p \frac{2(q+1) + p(p-2)}{2(q+1)+p(q-1)}} \left( \sup_{t\in\Lambda} \bint_{B} \frac{|u-a|^{q+1}}{\lambda^{2-p}\rho^{q+1}} \, \d x \right)^\frac{p(q+1-p)}{2(q+1) + p(q-1)} \\
&\phantom{+} + c\eps^{-\beta} \lambda^{p(2-p)} \left( \biint_Q |Du|^{p-1} + |F|^{p-1} \, \d x \d t \right)^p.
\end{align*}
For the first term we use Young's inequality with exponents
$\frac{2(q+1)+p(q-1)}{2(q+1) + p(p-2)}$ and $\frac{2(q+1) +
  p(q-1)}{p(q+1-p)}$ (observe that these exponents are $> 1$ in case
$2 < p < q+1$). For the last term, we use the $\lambda$-subintrinsic
scaling~\eqref{eq:lambda-intrinsic}$_1$ and the fact $p>2$ to deduce 
\begin{align*}
  \mathrm{II}_2
  &\leq
    \eps \lambda^p + \eps \sup_{t\in\Lambda} \bint_{B}\frac{|u-a|^{q+1}}{\lambda^{2-p}\rho^{q+1}} \,\d x
    +
    c\eps^{-\beta} \left( \biint_Q |Du|^{p-1} + |F|^{p-1} \, \d x \d t \right)^{\frac{p}{p-1}}.
\end{align*}
Collecting the estimates and applying H\"older's inequality and Lemma~\ref{lem:a-b}, we arrive at the bound
\begin{align*}
  \mathrm{II}
  &\le
    \eps \lambda^p + \eps \sup_{t\in\Lambda}
    \bint_{B}\frac{\big|\u^{\frac{q+1}{2}}-\a^{\frac{q+1}{2}}\big|^{2}}{\lambda^{2-p}\rho^{q+1}} \,\d x\\
  &\quad  +
    c\eps^{-\beta} \left( \biint_{Q} |Du|^{\nu p} \, \d x \d t \right)^\frac{1}{\nu} + c\eps^{-\beta} \biint_{Q} |F|^p \, \d x \d t.
\end{align*}
As stated in~\eqref{final-bound-I}, the term $\mathrm{I}$ is bounded
by exactly the same quantities. Therefore, the asserted estimate
follows by bounding $\lambda^p$ by means of the $\lambda$-intrinsic
scaling~\eqref{eq:lambda-intrinsic}. 
\end{proof}

\section{Parabolic Sobolev-Poincar\'e type inequalities in case \texorpdfstring{$q+1 < p$}{q+1<p}}
\label{sec:sobolev-slow-diffusion}

In this section, we prove versions of the Sobolev-Poincar\'e type
inequalities from the preceding section for the missing case $q+1<p$.
In this case, the $\theta$-intrinsic scaling~\eqref{eq:theta-intrinsic}
reads as 
\begin{equation} \label{eq:theta-intrinsic-p}
\frac{1}{C_\theta} \biint_{Q_{2\rho}^{(\lambda,\theta)}}  \frac{|u|^{p}}{(2\rho)^{p}} \, \d x \d t \leq \theta^\frac{2p}{q+1} \leq C_\theta \biint_{Q_{\rho}^{(\lambda,\theta)}}  \frac{|u|^{p}}{\rho^{p}} \, \d x \d t
\end{equation}
and the $\theta$-singular scaling~\eqref{eq:theta-degenerate-1} becomes
\begin{equation} \label{eq:theta-degenerate-p}
\frac{1}{C_\theta} \biint_{Q_{2\rho}^{(\lambda,\theta)}}  \frac{|u|^{p}}{(2\rho)^{p}} \, \d x \d t \leq \theta^\frac{2p}{q+1} \leq C_\theta \left( \biint_{Q_{\rho}^{(\lambda,\theta)}} |Du|^p + |F|^p \, \d x \d t \right)^\frac{2}{q+1}.
\end{equation}

We start with an auxiliary estimate that will be needed for the
estimate of the first Sobolev-Poincar\'e inequality.

\begin{lemma} \label{lem:p-slicewise-estimate}
Let $p> q+1 > 2$ and
assume that $Q_{2\rho}^{(\lambda,\theta)}(z_o) \Subset \Omega_T$
and that the $\lambda$- and $\theta$-subintrinsic scaling
properties~\eqref{eq:lambda-intrinsic}$_1$
and~\eqref{eq:theta-intrinsic-p}$_1$ are satisfied. Then, there exists a constant
$c > 0$ depending on $n,p,q,C_\theta$ and $C_\lambda$ such that
for
every function $u\in L_{\mathrm{loc}}^p(0,T;W_{\mathrm{loc}}^{1,p}(\Omega,\R^N))\cap
L^\infty_{\mathrm{loc}}(0,T;L_{\mathrm{loc}}^{q+1}(\Omega,\R^N))$,
we have 
\begin{align*}
&\biint_{Q_\rho^{(\lambda,\theta)}(z_o)} \frac{\babs{u - (u)_{x_o;\rho}^{(\theta)}(t)}^{p}}{\rho^{p}} \, \d x \d t \\
&\ \ \leq c \bigg(\biint_{Q_\rho^{(\lambda,\theta)}(z_o)}|Du|^{\nu
                                                                                                                                                           p}\dx\dt\bigg)^{\frac{2}{2+\nu (q-1)}} \\
&\qquad\cdot\left( \sup_{t\in \Lambda_\rho^{(\lambda)}(t_o)}\bint_{B_\rho^{(\theta)}(x_o)} \frac{|u-a|^{q+1}}{\rho^{q+1}} \, \d x \right)^{\frac{2p(1-\nu)}{(q+1)(2+\nu (q-1))}}    
\end{align*}
for every $\nu\in[\frac{n}{n+q+1},1]$ and every $a\in\R^N$. 
In particular, we have
\begin{align*}
\biint_{Q_\rho^{(\lambda,\theta)}(z_o)} &\frac{\babs{u - (u)_{x_o;\rho}^{(\theta)}(t)}^{p}}{\rho^{p}} \, \d x \d t \leq c \lambda^\frac{2p}{q+1}.
\end{align*}
\end{lemma}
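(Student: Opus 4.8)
The plan is to follow the strategy of Lemma~\ref{lem:q+1-slicewise-estimate}, but with the roles of the exponents $p$ and $q+1$ interchanged, since now $q+1<p$. Throughout I abbreviate $Q:=Q_\rho^{(\lambda,\theta)}(z_o)$, $B:=B_\rho^{(\theta)}(x_o)$ and $\Lambda:=\Lambda_\rho^{(\lambda)}(t_o)$, and I write $R:=\theta^{\frac{1-q}{1+q}}\rho$ for the actual spatial radius of the ball $B$.

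First I would derive a slice-wise estimate. For a.e.\ $t\in\Lambda$, set $v:=u(\cdot,t)-(u)_B(t)$ and apply the Gagliardo--Nirenberg inequality from Lemma~\ref{lem:GN} on $B=B_R(x_o)$ with the parameters $(p,q,r,\vartheta)$ replaced by $(p,\nu p,q+1,\nu)$; this is admissible precisely because $\nu\ge\frac n{n+q+1}$ (and when $\nu=1$ the step degenerates into Poincaré's inequality alone), and the choice $\vartheta=\nu$ has the crucial feature that the gradient bracket is raised to the power one. Absorbing the lower-order term $\bint_B\frac{|v|^{\nu p}}{R^{\nu p}}$ into $\bint_B|Dv|^{\nu p}$ by the slice-wise Poincaré inequality, and then rewriting every average in terms of $\rho$ rather than $R$ — which, since $R^s=\theta^{\frac{(1-q)s}{q+1}}\rho^s$, produces exactly the factor $\theta^{-\frac{\nu p(q-1)}{q+1}}$ — one obtains, using $Dv=Du(\cdot,t)$,
$$
\bint_{B}\frac{|v|^p}{\rho^p}\,\d x
\le c\,\theta^{-\frac{\nu p(q-1)}{q+1}}\bigg(\bint_{B}|Du|^{\nu p}\,\d x\bigg)\bigg(\bint_{B}\frac{|v|^{q+1}}{\rho^{q+1}}\,\d x\bigg)^{\frac{(1-\nu)p}{q+1}}.
$$
Integrating over $t\in\Lambda$, pulling the supremum in $t$ of the $L^{q+1}$-factor out of the integral (no Jensen step is needed, as the gradient term carries power one), and finally replacing the slice-wise mean $(u)_B(t)$ by the arbitrary vector $a$ via the quasi-minimality estimate of Lemma~\ref{lem:uavetoa} applied with $\alpha=1$, I arrive at
$$
\biint_{Q}\frac{|u-(u)_B(t)|^p}{\rho^p}\,\d x\d t
\le c\,\theta^{-\frac{\nu p(q-1)}{q+1}}\bigg(\biint_{Q}|Du|^{\nu p}\,\d x\d t\bigg)\bigg(\sup_{t\in\Lambda}\bint_{B}\frac{|u-a|^{q+1}}{\rho^{q+1}}\,\d x\bigg)^{\frac{(1-\nu)p}{q+1}}.
$$

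The next step removes the negative power of $\theta$. The key observation is the elementary bound $\biint_{Q}\frac{|u-(u)_B(t)|^p}{\rho^p}\le c\,\biint_{Q}\frac{|u|^p}{\rho^p}\le c\,\theta^{\frac{2p}{q+1}}$, where the first step is the triangle inequality followed by Jensen's inequality, and the second is the $\theta$-subintrinsic scaling~\eqref{eq:theta-intrinsic-p}$_1$ together with $Q\subset Q_{2\rho}^{(\lambda,\theta)}$ and the comparability of measures and radii of the two cylinders. Consequently $\theta^{-\frac{\nu p(q-1)}{q+1}}\le c\,\big(\,\biint_{Q}\frac{|u-(u)_B(t)|^p}{\rho^p}\,\d x\d t\big)^{-\frac{\nu(q-1)}2}$, the case of a vanishing left-hand side being trivial. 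Inserting this into the previous display, absorbing the factor involving the left-hand side into the left-hand side, and taking both sides to the power $\frac2{2+\nu(q-1)}$ yields the first asserted inequality; the exponents $\frac2{2+\nu(q-1)}$ and $\frac{2p(1-\nu)}{(q+1)(2+\nu(q-1))}$ then come out automatically. For the ``in particular'' statement I would simply choose $\nu=1$, so that the supremum term disappears, and estimate $\biint_{Q}|Du|^p\le c\,\biint_{Q_{2\rho}^{(\lambda,\theta)}}\big(|Du|^p+|F|^p\big)\,\d x\d t\le c\,\lambda^p$ by the $\lambda$-subintrinsic scaling~\eqref{eq:lambda-intrinsic}$_1$.

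I expect the only genuine difficulty to be the bookkeeping: keeping track of the powers of $\theta$ that the anisotropic radius $R=\theta^{\frac{1-q}{1+q}}\rho$ generates through the Gagliardo--Nirenberg and Poincaré steps, and — more importantly — recognizing that one should use the \emph{non-optimal} interpolation parameter $\vartheta=\nu$ in Lemma~\ref{lem:GN} rather than the scaling-critical one (in parallel to the choice $\vartheta=\frac{\nu p}{q+1}$ in Lemma~\ref{lem:q+1-slicewise-estimate}). This is precisely what forces the gradient term to appear with power one before the time integration and lets all powers of $\theta$ collapse into the single factor $\theta^{-\nu p(q-1)/(q+1)}$, which can then be eliminated by the absorption argument above; any other choice spreads the $\theta$-dependence across several terms and does not lead to the stated exponents.
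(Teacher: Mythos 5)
Your proposal is correct and follows essentially the same route as the paper: Gagliardo--Nirenberg with $(p,q,r,\vartheta)=(p,\nu p,q+1,\nu)$ applied slicewise together with Poincar\'e's inequality and Lemma~\ref{lem:uavetoa}, then elimination of the factor $\theta^{-\nu p\frac{q-1}{q+1}}$ via the $\theta$-subintrinsic scaling~\eqref{eq:theta-intrinsic-p}$_1$ and absorption, and finally $\nu=1$ with~\eqref{eq:lambda-intrinsic}$_1$ for the second assertion. The bookkeeping of the $\theta$-powers and the resulting exponents all match the paper's computation.
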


\begin{proof}
As in the preceding section, we abbreviate $Q:=Q_\rho^{(\lambda,\theta)}(z_o)$,
$B:=B_\rho^{(\theta)}(x_o)$, $\widehat B:=B_{\hat\rho}^{(\theta)}(x_o)$ and
$\Lambda:=\Lambda_\rho^{(\lambda)}(t_o)$.
We note that the fact $\nu\ge\frac{n}{n+q+1}$ allows us
to use the
Gagliardo-Nirenberg inequality from Lemma~\ref{lem:GN} with the
parameters $(p,q,r,\vartheta)$ replaced by $(p,\nu p,q+1,\nu)$. Finally, we apply Poincar\'e's inequality slicewise. In this way, we obtain
\begin{align*}
  &\biint_{Q}\frac{|u-(u)_B(t)|^{p}}{\rho^{p}} \, \d x \d t\\
  &\quad\le
    c\theta^{-p\frac{q-1}{q+1}}
    \biint_Q \bigg[|D u|^{\nu p}+\frac{|u-(u)_B(t)|^{\nu p}}{\big(\theta^{\frac{1-q}{1+q}}\rho\big)^{\nu p}}\bigg] \d x \d t\\
    &\quad\phantom{=} \cdot
    \Bigg(\sup_{t\in\Lambda}\bint_B\frac{|u-(u)_B(t)|^{q+1}}{\theta^{1-q}\rho^{q+1}}\dx\Bigg)^{\frac{(1-\nu)
    p}{q+1}}\\
  &\quad\le
    c\theta^{-\nu p\frac{q-1}{q+1}}
    \biint_Q |D u|^{\nu p} \d x \d t\ 
    \Bigg(\sup_{t\in\Lambda}\bint_B\frac{|u-a|^{q+1}}{\rho^{q+1}}\dx\Bigg)^{\frac{(1-\nu)
    p}{q+1}}.
\end{align*}
In the last step we applied Lemma~\ref{lem:uavetoa}. 
We use assumption~\eqref{eq:theta-intrinsic-p}$_1$ in
order to bound the negative power of $\theta$ appearing on the
right-hand side from above. In this way, we obtain
\begin{align*}
  &\biint_{Q}\frac{|u-(u)_B(t)|^{p}}{\rho^{p}}
  \, \d x \d t\\
  &\qquad\le
    c\bigg(\biint_{Q}\frac{|u-(u)_B(t)|^{p}}{\rho^{p}}
    \, \d x \d t\bigg)^{-\frac{\nu (q-1)}{2}}\\
  &\qquad\qquad\cdot\biint_Q |D u|^{\nu p} \d x \d t\ 
    \Bigg(\sup_{t\in\Lambda}\bint_B\frac{|u-a|^{q+1}}{\rho^{q+1}}\dx\Bigg)^{{\frac{(1-\nu)
    p}{q+1}}}.
\end{align*}
By absorbing the first integral on the right-hand side into the left
and taking both sides to the power $\frac{2}{2+\nu (q-1)}$,
we deduce the first asserted estimate. The second assertion follows by
choosing $\nu=1$ and using~\eqref{eq:lambda-intrinsic}$_1$. 
\end{proof}

Next, we prove a Sobolev-Poincar\'e type inequality for the first term on the right-hand side of
the energy estimate~\eqref{energy-estimate}.

\begin{lemma} \label{lem:sobo-poincare-p-2}
Suppose that $p >q+1 > 2$
  and that $u$ is a weak solution to~\eqref{eq:dne}, under
  assumption~\eqref{assumption:A}. Moreover, we consider a cylinder
  $Q_{2\rho}^{(\lambda,\theta)}(z_o) \Subset \Omega_T$ and assume that 
the $\lambda$-intrinsic coupling~\eqref{eq:lambda-intrinsic} and
additionally, property~\eqref{eq:theta-intrinsic-p} or
\eqref{eq:theta-degenerate-p} are satisfied. Then the following Sobolev-Poincar\'e inequality holds:
\begin{align*}
\theta^{p\frac{q-1}{q+1}} &\biint_{Q_\rho^{(\lambda,\theta)}(z_o)}\frac{| u - a |^p}{ \rho^{p} } \, \d x \d t \\
&\leq \eps \left( \sup_{t \in \Lambda_\rho^{(\lambda)}(t_o)} \lambda^{p-2} \bint_{B_\rho^{(\theta)}(x_o)} \frac{\babs{ \u^\frac{q+1}{2}(t)-\a^\frac{q+1}{2} }^2}{ \rho^{q+1} } \, \d x + \biint_{Q_\rho^{(\lambda,\theta)}(z_o)} |Du|^p \, \d x \d t \right) \\
&\phantom{+} + c \eps^{-\beta} \left[ \left( \biint_{Q_\rho^{(\lambda,\theta)}(z_o)} |Du|^{\nu p} \, \d x \d t  \right)^\frac{1}{\nu} + \biint_{Q_\rho^{(\lambda,\theta)}(z_o)} |F|^p \, \d x \d t  \right],
\end{align*}
where $\max \left\{ \frac{p-1}{p},\frac{n}{n+2} \right\} \leq \nu \leq
1$ and $a = (u)_{z_o;\rho}^{(\theta,\lambda)}$. The preceding estimate
holds for any $\eps \in (0,1)$ with a constant $c =
c(n,p,q,C_1,C_\theta,C_\lambda)> 0$ and $\beta = \beta(n,p,q)>0$.
\end{lemma}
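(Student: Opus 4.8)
The plan is to run the proof of Lemma~\ref{lem:sobo-poincare-p} again, with the slicewise estimate of Lemma~\ref{lem:p-slicewise-estimate} taking over the role played there by Lemma~\ref{lem:q+1-slicewise-estimate}. As before I abbreviate $Q:=Q_\rho^{(\lambda,\theta)}(z_o)$, $B:=B_\rho^{(\theta)}(x_o)$, $\Lambda:=\Lambda_\rho^{(\lambda)}(t_o)$, and, with the radius $\hat\rho\in[\tfrac\rho2,\rho]$ from Lemma~\ref{lem:gluing}, $\widehat B:=B_{\hat\rho}^{(\theta)}(x_o)$, $\widehat Q:=\widehat B\times\Lambda$ and $\hat a:=\big[(\u^q)_{\widehat Q}\big]^{\frac1q}$. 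Since $p>q+1>2$ forces $p>2$, only two situations occur, the $\theta$-singular case~\eqref{eq:theta-degenerate-p} and the $\theta$-intrinsic case~\eqref{eq:theta-intrinsic-p}; there is no low-exponent sub-case as for $p\le2$ in Lemma~\ref{lem:sobo-poincare-p}. By the triangle inequality and Lemma~\ref{lem:uavetoa} with $\alpha=1$ I would decompose
\begin{align*}
	\theta^{p\frac{q-1}{q+1}}\biint_Q\frac{|u-a|^p}{\rho^p}\,\d x\d t
	&\le c\,\theta^{p\frac{q-1}{q+1}}\biint_Q\frac{\big|u-\big[(\u^q)_{\widehat B}\big]^{\frac1q}(t)\big|^p}{\rho^p}\,\d x\d t \\
	&\quad+c\,\theta^{p\frac{q-1}{q+1}}\bint_\Lambda\frac{\big|\big[(\u^q)_{\widehat B}\big]^{\frac1q}(t)-\hat a\big|^p}{\rho^p}\,\d t
	=:\mathrm{I}+\mathrm{II}.
\end{align*}
The spatial part $\mathrm{I}$ is set up for Lemma~\ref{lem:p-slicewise-estimate} — after one more application of Lemma~\ref{lem:uavetoa} (with $\alpha=\tfrac1q$, legitimate since $p>q$) to pass from the $\widehat B$-mean of $\u^q$ to the $B$-mean of $u$ — whereas the time part $\mathrm{II}$ is designed for the gluing lemma.

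\textbf{The $\theta$-singular case.} Here~\eqref{eq:theta-degenerate-p} together with~\eqref{eq:lambda-intrinsic} gives $\theta\le c\lambda$, so the prefactors $\theta^{p(q-1)/(q+1)}$ may be replaced by $c\lambda^{p(q-1)/(q+1)}$ in both $\mathrm{I}$ and $\mathrm{II}$. For $\mathrm{I}$ I would insert the precise form of Lemma~\ref{lem:p-slicewise-estimate} and bound its sup-term by $c\lambda^{2-p}\sup_{t\in\Lambda}\lambda^{p-2}\bint_B\big|\u^{\frac{q+1}{2}}(t)-\a^{\frac{q+1}{2}}\big|^2/\rho^{q+1}$ via Lemma~\ref{lem:a-b} with $\alpha=\tfrac{q+1}2$; then Young's inequality splits off $\eps$ times the sup-term, and the residual powers of $\lambda$ and of the $\nu p$-energy are reorganized into the asserted right-hand side by means of~\eqref{eq:lambda-intrinsic} and the consequence $\lambda\ge c(\biint_Q|Du|^{\nu p})^{1/(\nu p)}$ of~\eqref{eq:lambda-intrinsic}$_1$. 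For $\mathrm{II}$ I would apply Lemma~\ref{lem:a-b} with $\alpha=q$ to pass to $|(\u^q)_{\widehat B}(t)-(\u^q)_{\widehat Q}|^{p/q}$, then Jensen in the second time variable and Lemma~\ref{lem:gluing}; using $\theta\le c\lambda$ together with the lower bound $\lambda\ge c(\biint_Q|Du|^{p-1}+|F|^{p-1})^{1/(p-1)}$ implied by~\eqref{eq:lambda-intrinsic}$_1$, the exponents collapse to give $\mathrm{II}\le c(\biint_Q|Du|^{p-1}+|F|^{p-1})^{p/(p-1)}$, which by Hölder's inequality (using $\nu\ge\tfrac{p-1}p$) is at most $c(\biint_Q|Du|^{\nu p})^{1/\nu}+c\biint_Q|F|^p$.

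\textbf{The $\theta$-intrinsic case.} This is the substantial part and mirrors Case~3 of the proof of Lemma~\ref{lem:sobo-poincare-p}. Applying Lemma~\ref{lem:uavetoa} in~\eqref{eq:theta-intrinsic-p}$_2$ yields $\theta^{2p/(q+1)}\le c(|a|/\rho)^p+c\biint_Q|u-a|^p/\rho^p$, and writing $\theta^{p(q-1)/(q+1)}=(\theta^{2p/(q+1)})^{(q-1)/2}$ I would split $\mathrm{I}=\mathrm{I}_1+\mathrm{I}_2$, where $\mathrm{I}_1$ carries the coefficient $(|a|/\rho)^{p(q-1)/2}$ and $\mathrm{I}_2$ the coefficient $(\biint_Q|u-a|^p/\rho^p)^{(q-1)/2}$. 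For $\mathrm{I}_1$ I would apply the Gagliardo--Nirenberg inequality (Lemma~\ref{lem:GN}) slicewise with parameters $(p,\nu p,2,\nu)$ — admissible exactly because $\nu\ge\tfrac n{n+2}$, noting that here the role of $\sigma=\max\{p,q\}$ is played by $p$ since $p>q$ — followed by slicewise Poincaré, the bound $(|a|/\rho)^p\le c\theta^{2p/(q+1)}$ from~\eqref{eq:theta-intrinsic-p}$_1$, Lemma~\ref{lem:Acerbi-Fusco} and Lemma~\ref{lem:a-b} with $\alpha=\tfrac{q+1}2$ to convert the arising $L^2$-sup into the sup-term of the assertion, and finally Young's inequality with~\eqref{eq:lambda-intrinsic}$_1$ (using $\nu\ge\tfrac{p-1}p$ and $p>2$); this gives $\mathrm{I}_1\le\eps\sup_{t\in\Lambda}\lambda^{p-2}\bint_B\big|\u^{\frac{q+1}2}(t)-\a^{\frac{q+1}2}\big|^2/\rho^{q+1}+c\eps^{-\beta}(\biint_Q|Du|^{\nu p})^{1/\nu}$. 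The term $\mathrm{I}_2$ I would further split by the triangle inequality into a slicewise $|u-[(\u^q)_{\widehat B}]^{1/q}(t)|$-piece, estimated by Lemma~\ref{lem:p-slicewise-estimate}, and a time-oscillation piece, estimated by Lemma~\ref{lem:gluing}; the powers of $\theta$ cancel after using~\eqref{eq:theta-intrinsic-p}$_1$, and Young's inequality produces the claimed form. Finally $\mathrm{II}$ would be treated exactly as in Lemma~\ref{lem:sobo-poincare-p}: expand $\theta^2$ by the $\theta$-superintrinsic inequality, split $\mathrm{II}$ into three pieces, estimate the two pure-$\u^q$-oscillation pieces by Lemma~\ref{lem:Acerbi-Fusco}/Lemma~\ref{lem:a-b} with $\alpha=q$ and Lemma~\ref{lem:gluing}, and the mixed piece by combining~\eqref{eq:theta-intrinsic-p}$_1$, Lemma~\ref{lem:p-slicewise-estimate} and Lemma~\ref{lem:gluing}; here the harmless reduction $\biint_Q|Du|^{p-1}+|F|^{p-1}\le\theta^{p-1}$ (the opposite inequality makes the left-hand side $\le c\theta^p$ by~\eqref{eq:theta-intrinsic-p}$_1$) serves to dispose of negative powers of $\theta$. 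Combining the bounds for $\mathrm{I}$ and $\mathrm{II}$ and replacing $\lambda^p$ by~\eqref{eq:lambda-intrinsic} yields the assertion after replacing $\eps$ by $\eps/c$.

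\textbf{The main obstacle} is the exponent bookkeeping in the $\theta$-intrinsic case: at each step one must verify that the powers of $\theta$ generated by Lemma~\ref{lem:p-slicewise-estimate} and by the gluing lemma are exactly absorbed by the $\theta$-sub- or $\theta$-superintrinsic scalings, and that the leftover powers of $\lambda$ are either non-positive — so that $\lambda\ge c(\biint_Q|Du|^{\nu p})^{1/(\nu p)}$ applies — or equal to $p$ — so that~\eqref{eq:lambda-intrinsic}$_2$ applies. This is what dictates the precise exponents in Lemma~\ref{lem:p-slicewise-estimate}, and it is the origin of the two lower bounds on $\nu$: the constraint $\nu\ge\tfrac{p-1}p$ is exactly what keeps every Young exponent arising when $\lambda^p$ (or the sup-term) is separated from $(\biint_Q|Du|^{\nu p})^{1/\nu}$ strictly above $1$, while $\nu\ge\tfrac n{n+2}$ is precisely the admissibility threshold of the Gagliardo--Nirenberg interpolation against the $L^2$-sup in the estimate of $\mathrm{I}_1$. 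The standing assumption $p>q+1>2$ enters in the same manner, ensuring that the Young exponents needed to handle the time-oscillation terms are larger than $1$.
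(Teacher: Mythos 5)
Your proposal is correct and follows essentially the same route as the paper: the same $\mathrm{I}+\mathrm{II}$ decomposition via Lemma~\ref{lem:uavetoa} and the gluing radius, the same case distinction between~\eqref{eq:theta-degenerate-p} and~\eqref{eq:theta-intrinsic-p}, Lemma~\ref{lem:p-slicewise-estimate} in place of Lemma~\ref{lem:q+1-slicewise-estimate}, and the same absorption of the $\theta$-powers through the sub-/superintrinsic scalings followed by Young's inequality and~\eqref{eq:lambda-intrinsic}. The only cosmetic deviations are that the paper handles the spatial term in the $\theta$-singular case by a direct Gagliardo--Nirenberg application rather than by citing Lemma~\ref{lem:p-slicewise-estimate} as a black box, and that it splits $\mathrm{I}$ into three pieces at once rather than in two stages; the exponent bookkeeping you flag as the main obstacle does close in every sub-case, exactly as in the paper.
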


\begin{proof}
We continue to use the notations $Q,\widehat Q,B,\widehat B$ and
$\Lambda$ introduced in the preceding proofs.   
First observe that $p > q+1$ implies $p > 2$.
We distinguish between the cases \eqref{eq:theta-degenerate-p} and
\eqref{eq:theta-intrinsic-p}. 

\emph{Case 1: The $\theta$-singular case \eqref{eq:theta-degenerate-p}.}
  We use Lemma~\ref{lem:uavetoa} and the triangle inequality to estimate 
  \begin{align*}
    \theta^{p\frac{q-1}{q+1}} \biint_{Q}\frac{|u-a |^p}{\rho^{p}}\,\d
    x\d t
    &\le
    c \theta^{p\frac{q-1}{q+1}} \biint_{Q}\frac{\big|u-\big[(\u^q)_{\widehat B}\big]^\frac{1}{q}(t) \big|^p}{\rho^{p}}\,\d
    x\d t \\
    &\phantom{+} + c \theta^{p\frac{q-1}{q+1}}
      \biint_{Q}\frac{\big|\big[(\u^q)_{\widehat B}\big]^\frac{1}{q}(t) - \hat a\big|^p}{\rho^{p}}\,\d x\d t ,
  \end{align*}
  with $\hat a = [(\u^q)_{\widehat Q}]^\frac{1}{q}$.
  For the first term we use Lemmas~\ref{lem:uavetoa} and~\ref{lem:GN} with $(p,q,r,\vartheta)=(p,\nu p, q+1,\nu)$ to obtain 
\begin{align*}
\theta^{p\frac{q-1}{q+1}} \biint_{Q}\frac{\big|u-\big[(\u^q)_{\widehat B}\big]^\frac{1}{q}(t) \big|^p}{\rho^{p}}\,\d
    x\d t &\leq c \theta^\frac{p(1-\nu)(q-1)}{q+1} \lambda^\frac{p(2-p)(1-\nu)}{q+1} \biint_{Q} |Du|^{\nu p} \,\d
    x\d t \\
    &\qquad \cdot \left( \sup_{t\in\Lambda} \mint_B \frac{|u-a|^{q+1}}{\lambda^{2-p} \rho^p}\, \d x \right)^\frac{(1-\nu)p}{q+1}.
\end{align*}
Observe that $\nu \geq \frac{n}{n+2} > \frac{n}{n+q+1}$ such that Lemma~\ref{lem:GN} is applicable. 
Now we use \eqref{eq:theta-degenerate-p}
and~\eqref{eq:lambda-intrinsic} which imply
\begin{equation*}
  \theta\le c\lambda
  \qquad\mbox{and}\qquad
  \lambda^p\ge c\bigg(\biint_Q|Du|^{\nu p}\d x\d t\bigg)^{\frac{1}{\nu}}.
\end{equation*}
Then we apply Young's inequality with the power $\frac{q+1}{(1-\nu)p}$
and its conjugate, which are greater than one since $\nu \geq \frac{p-1}{p} $. This concludes the claim for the first term.
  
  For the second term we use Lemma~\ref{lem:gluing} and deduce
  \begin{align*}
\theta^{p\frac{q-1}{q+1}} \biint_{Q}\frac{\big|\big[(\u^q)_{\widehat B}\big]^\frac{1}{q}(t) - \hat a\big|^p}{\rho^{p}}\,\d
    x\d t &\leq c \theta^{p\frac{q-1}{q}} \lambda^\frac{p(2-p)}{q} \left( \biint_{Q} |Du|^{p-1} + |F|^{p-1} \,\d
    x\d t \right)^\frac{p}{q} \\
    &\le c \lambda^{p\frac{q+1-p}{q}} \left( \biint_{Q} |Du|^{p-1} + |F|^{p-1} \,\d
    x\d t \right)^\frac{p}{q} \\
    &\le c \left(\biint_{Q} |Du|^{p-1} + |F|^{p-1} \,\d
    x\d t \right)^\frac{p}{p-1},
  \end{align*}
  since assumptions \eqref{eq:theta-degenerate-p}
  and~\eqref{eq:lambda-intrinsic} imply $\theta\le
  c\lambda$ and $p > q+1$, which concludes the proof in this case. 

\emph{Case 2: The $\theta$-intrinsic case~\eqref{eq:theta-intrinsic-p}.}
By using triangle inequality and Lemma~\ref{lem:uavetoa} with $\alpha=1$, we write
\begin{align*}
\theta^{p\frac{q-1}{q+1}}
  \biint_{Q}\frac{| u - a |^p}{ \rho^{p}} \, \d x \d t
  &\leq c\theta^{p\frac{q-1}{q+1}}
        \biint_{Q}\frac{\big| u -
        \big[(\u^q)_{\widehat B}\big]^\frac{1}{q}(t)\big|^p}{ \rho^{p} } \, \d x \d t \\
  &\phantom{+}
    +c\frac{\theta^{2p\frac{q-1}{q+1}}}{\theta^{p\frac{q-1}{q+1}}}
    \bint_{\Lambda}\frac{\big|\big[(\u^q)_{\widehat B}\big]^\frac{1}{q}(t)
    - \big[(\u^q)_{\widehat Q}\big]^\frac{1}{q}\big|^p}{ \rho^{p} } \,  \d t \\
&=: \mathrm{I} + \mathrm{II}.
\end{align*}
The $\theta$-superintrinsic scaling~\eqref{eq:theta-intrinsic-p}$_2$ implies
\begin{align*}
  \theta^2
  \le
  c\bigg(\frac{|a|}{\rho}\bigg)^{q+1}
  +
  c\bigg(\biint_{Q} \frac{|u-a|^{p}}{\rho^{p}} \, \d x \d t\bigg)^{\frac{q+1}{p}}.
\end{align*}
We use this to estimate the term $\mathrm{I}$ and apply
Lemma~\ref{lem:uavetoa} with $\alpha=\frac1q$ and $p$. Note
that the application is possible since $p> q+1>q$. This procedure leads to the estimate
\begin{align*}
\mathrm I &\leq c\left( \frac{|a|}{\rho} \right)^{p\frac{q-1}{2}} \biint_{Q} \frac{|u - (u)_B (t)|^p}{\rho^p} \, \d x  \d t \\
&\phantom{+} + c\left(\biint_{Q} \frac{\big|u-\big[(\u^q)_{\widehat B}\big]^\frac{1}{q}(t)\big|^{p}}{\rho^{p}} \, \d x \d t\right)^{\frac{q-1}{2}} \biint_{Q} \frac{|u-(u)_B(t)|^{p}}{\rho^{p}} \, \d x \, \d t \\
&\phantom{+} + c\left(\biint_{Q} \frac{\big|\big[(\u^q)_{\widehat B}\big]^\frac{1}{q}(t)-\hat a\big|^{p}}{\rho^{p}} \, \d x \d t\right)^{\frac{q-1}{2}} \biint_{Q} \frac{|u-(u)_B(t)|^{p}}{\rho^{p}} \, \d x \, \d t  \\
&=: \mathrm{I}_1  + \mathrm{I}_2 + \mathrm{I}_3,
\end{align*}
where we abbreviated $\hat a = [(\u^q)_{\widehat
  Q}]^\frac{1}{q}$. By using Lemma~\ref{lem:GN} with $(p,q,r,\vartheta)$ replaced by
$(p,\nu p,2,\nu)$, which is possible since $\nu \ge \frac{n}{n+2}$, we have
\begin{align*}
  \mathrm{I}_1
  \leq
  c\left(\frac{|a|}{\rho}\right)^{p\frac{q-1}{2}}
  \theta^{-p \frac{q-1}{q+1}}&\biint_{Q} \Bigg[|D u|^{\nu p} + \frac{ \babs{ u - (u)_{B}(t)}^{\nu p}}{\left( \theta^{\frac{1-q}{1+q}} \rho \right)^{\nu p}}\Bigg] \, \d x \d t \\
&\phantom{+}\cdot\left(\sup_{t \in\Lambda}\bint_{B}\frac{\babs{u-(u)_{B}(t)}^{2}}{\left(\theta^{\frac{1-q}{1+q}}\rho\right)^{2}}\,\d x \right)^\frac{(1-\nu)p}{2}. 
\end{align*}
This is exactly the same estimate as~\eqref{first-bound-I1} in the
proof of Lemma~\ref{lem:sobo-poincare-p}. Therefore, we can repeat the
arguments leading to~\eqref{bound-I-1} and obtain
\begin{align*}
  \mathrm{I}_1
  \le
  \eps \sup_{t \in \Lambda} \bint_{B} \frac{ \babs{ \u^\frac{q+1}{2} - \a^\frac{q+1}{2}}^{2}}{ \lambda^{2-p} \rho^{q+1}} \, \d x +  c\eps^{-\beta} \left( \biint_{Q} |D u|^{\nu p} \, \d x \d t \right)^\frac{1}{\nu}.
\end{align*}

Next, we estimate the term $\mathrm{I}_2$. Observe that Lemma~\ref{lem:uavetoa} implies
\begin{equation*}
\mathrm{I}_2 \leq c \left( \biint_Q \frac{|u-(u)_B(t)|^{p}}{\rho^{p}}  \, \d x\d t \right)^\frac{q+1}{2}.
\end{equation*}
Furthermore, by applying Lemma~\ref{lem:p-slicewise-estimate} and~\eqref{eq:lambda-intrinsic}$_1$ we have
\begin{align*}
\mathrm{I}_2 &\leq c \lambda^\frac{p(2-p)(1-\nu)}{2+\nu(q-1)} \bigg(\biint_{Q}|Du|^{\nu p}\dx\dt\bigg)^{\frac{q+1}{2+\nu (q-1)}}\left( \sup_{t\in \Lambda}\bint_{B} \frac{|u-a|^{q+1}}{\lambda^{2-p}\rho^{q+1}} \, \d x \right)^{\frac{p(1-\nu)}{2+\nu (q-1)}}  \\
&\leq c \bigg( \left[\biint_{Q}|Du|^{\nu p}\dx\dt \right]^\frac{1}{\nu}\bigg)^{\frac{(2-p)(1-\nu) + \nu(q+1)}{2+\nu (q-1)}} \left( \sup_{t\in \Lambda}\bint_{B} \frac{|u-a|^{q+1}}{\lambda^{2-p}\rho^{q+1}} \, \d x \right)^{\frac{p(1-\nu)}{2+\nu (q-1)}}.
\end{align*}
Since $\nu \geq \frac{p-1}{p}$ the exponents outside the round brackets are less than one, and furthermore they add up to one. Thus, we may use Young's inequality which completes the treatment of the term $\mathrm{I}_2$.

Then we consider the term $\mathrm{I}_3$. By using Lemma~\ref{lem:gluing} for the first term and Poincar\'e inequality for the second we obtain
\begin{align*}
\mathrm{I}_3 \leq c \theta^{-p\frac{q-1}{2q}} \lambda^{p\frac{2q+(2-p)(q-1)}{2q}} \bigg(\biint_Q|Du|^{p-1}+|F|^{p-1}\dx\dt\bigg)^{\frac{p(q-1)}{2q}}
\end{align*}
This corresponds to estimate~\eqref{first-bound-I-22} for the term
$\mathrm{I}_{2,2}$ in the proof of
Lemma~\ref{lem:sobo-poincare-p}. Therefore, arguing as after
estimate~\eqref{first-bound-I-22}, we deduce 
\begin{align*}
\mathrm{I}_{3} \leq \eps \lambda^p + c\eps^{-\beta} \bigg(\biint_Q|Du|^{p-1}+|F|^{p-1}\dx\dt\bigg)^{\frac{p}{p-1}}.
\end{align*}

By the $\theta$-superintrinsic
scaling~\eqref{eq:theta-intrinsic-p}$_2$ we have
\begin{align*}
  \theta^2
  &\le
  c\left( \frac{|\hat a|}{\rho} \right)^{q+1}
  +
  c \left( \biint_{Q} \frac{\big|u - \big[(\u^q)_{\widehat B}\big]^\frac{1}{q}(t)\big|^{p}}{\rho^{p}} \, \d x \d t \right)^\frac{q+1}{p}\\ 
  &\qquad+
  c \left(\mint_{\Lambda} \frac{\big|\big[(\u^q)_{\widehat B}\big]^\frac{1}{q}(t) - \hat a\big|^{p}}{\rho^{p}} \, \d t\right)^\frac{q+1}{p},
\end{align*}
where $\hat a = [(\u^q)_{\widehat
  Q}]^\frac{1}{q}$. Using this for the estimate of $\mathrm{II}$, we
obtain 
\begin{align*}
\mathrm{II} 
&\leq c \theta^{-p \frac{q-1}{q+1}}\left( \frac{|\hat a|}{\rho} \right)^{(q-1)p}\bint_{\Lambda}\frac{\big|\big[(\u^q)_{\widehat B}\big]^\frac{1}{q}(t) - \hat a \big|^p}{ \rho^{p} } \,  \d t \\
&\phantom{+} +c\theta^{-p \frac{q-1}{q+1}}\left( \biint_{Q} \frac{\big|u - \big[(\u^q)_{\widehat B}\big]^\frac{1}{q}(t)\big|^{p}}{\rho^{p}} \, \d x \d t \right)^{q-1}\bint_{\Lambda}\frac{\big|\big[(\u^q)_{\widehat B}\big]^\frac{1}{q}(t) - \hat a \big|^p}{ \rho^{p} } \,  \d t \\
&\phantom{+} +c\theta^{-p \frac{q-1}{q+1}}\left( \bint_{\Lambda} \frac{\big[|(\u^q)_{\widehat B}\big]^\frac{1}{q}(t) - \hat a\big|^{p}}{\rho^{p}} \, \d t \right)^{q-1}\bint_{\Lambda}\frac{\big|\big[(\u^q)_{\widehat B}\big]^\frac{1}{q}(t) - \hat a \big|^p}{ \rho^{p} } \,  \d t \\
&=: \mathrm{II}_1 + \mathrm{II}_2 + \mathrm{II}_3.
\end{align*}

For the first term, we use Lemma~\ref{lem:Acerbi-Fusco}, which implies
\begin{align*}
  \mathrm{II}_1 \leq c \theta^{-p \frac{q-1}{q+1}}\bint_{\Lambda}\frac{| (\u^q)_{\widehat B}(t) - \hat \a^q |^p}{ \rho^{pq} } \,  \d t,
\end{align*}
while the third term is estimated with the help of Lemma~\ref{lem:a-b}
and H\"older's inequality, which gives
\begin{align*}
  \mathrm{II}_3
  &\leq c \theta^{-p \frac{q-1}{q+1}} \left( \bint_{\Lambda}\frac{|
    (\u^q)_{\widehat B}(t) - \hat \a^q |^\frac{p}{q}}{ \rho^{p} } \,  \d t
    \right)^q\\
  &\leq c \theta^{-p \frac{q-1}{q+1}}\bint_{\Lambda}\frac{| (\u^q)_{\widehat B}(t) - \hat \a^q |^p}{ \rho^{pq} } \,  \d t.
\end{align*}
Therefore, both terms can be estimated as in~\eqref{first-bound-II-1},
with the result
\begin{equation*}
  \mathrm{II}_1+\mathrm{II}_3
  \le
  c \left( \biint_{Q} |Du|^{\nu p} \, \d x \d t \right)^\frac{1}{\nu} + c \biint_{Q} |F|^p \, \d x \d t. 
\end{equation*}
For the term $\mathrm{II}_2$, we
estimate the first part using the $\theta$-subintrinsic
scaling~\eqref{eq:theta-intrinsic-p}$_1$ and for the last integral we
apply Lemma~\ref{lem:a-b} with $\alpha=q$. The
resulting integrals are then estimated by Lemma~\ref{lem:p-slicewise-estimate} 
and Lemma~\ref{lem:gluing}, respectively.  This yields
 \begin{align*}
   \mathrm{II}_2
   &\le
   c\theta^{-\frac{p(q-1)}{q(q+1)}}\left( \biint_{Q} \frac{\big|u -(u)_B(t)\big|^{p}}{\rho^{p}} \,
      \d x \d t \right)^{\frac{(q-1)(q+1)}{2q}}\bint_{\Lambda}\frac{|(\u^q)_{\widehat B}(t) - \hat \a^q |^{\frac{p}{q}}}{ \rho^{p} } \,\d t\\
  &\leq c \theta^{-\frac{p(q-1)}{q(q+1)}}  \lambda^\frac{p(q-1)}{q} \left( \lambda^{2-p} \theta^\frac{q-1}{q+1} \biint_Q |Du|^{p-1} + |F|^{p-1} \, \d x \d t \right)^\frac{p}{q} \\
  &= c \lambda^\frac{p(q+1-p)}{q} \left( \biint_Q |Du|^{p-1} + |F|^{p-1} \, \d x \d t \right)^\frac{p}{q} \\
  &\le \eps \lambda^p + c\eps^{-\beta} \left( \biint_Q |Du|^{p-1} + |F|^{p-1} \, \d x \d t  \right)^\frac{p}{p-1},
 \end{align*}
 where we also used Young's inequality with exponents $\frac{q}{q+1-p}$ and $\frac{q}{p-1}$ on the last line. Thus the claim follows.
\end{proof}

Finally, we state the Sobolev-Poincar\'e inequality for the second
term on the right-hand side of~\eqref{energy-estimate}. It turns out
that its proof can be reduced to the preceding
Lemma~\ref{lem:sobo-poincare-p-2}.

\begin{lemma} \label{lem:sobo-poincare-q+1-2}
Suppose that $p > q+1 > 2$ 
  and that $u$ is a weak solution to~\eqref{eq:dne}, where 
  assumption~\eqref{assumption:A} holds true.
  Moreover, we consider a cylinder $Q_{2\rho}^{(\lambda,\theta)}(z_o)
\Subset \Omega_T$ and assume that~\eqref{eq:lambda-intrinsic} together with
either~\eqref{eq:theta-intrinsic-p} or~\eqref{eq:theta-degenerate-p}
are satisfied. Then the following Sobolev-Poincar\'e inequality holds:
\begin{align*}
  \lambda^{p-2} &\biint_{Q_\rho^{(\lambda,\theta)}(z_o)}
          \frac{\babs{ \u^\frac{q+1}{2}-\a^\frac{q+1}{2} }^2}{ \rho^{q+1} } \, \d x \d t \\
&\leq \eps \left( \sup_{t \in \Lambda_\rho^{(\lambda)}(t_o)} \lambda^{p-2} \bint_{B_\rho^{(\theta)}(x_o)} \frac{\babs{ \u^\frac{q+1}{2}(t)-\a^\frac{q+1}{2} }^2}{ \rho^{q+1} } \, \d x + \biint_{Q_\rho^{(\lambda,\theta)}(z_o)} |Du|^p \, \d x \d t \right) \\
&\phantom{+} + c \eps^{-\beta} \left[ \left( \biint_{Q_\rho^{(\lambda,\theta)}(z_o)} |Du|^{\nu p} \, \d x \d t  \right)^\frac{1}{\nu} + \biint_{Q_\rho^{(\lambda,\theta)}(z_o)} |F|^p \, \d x \d t  \right],
\end{align*}
where $\max \left\{ \frac{p-1}{p},\frac{n}{n+2} \right\} \leq \nu \leq 1$ and $a = (u)_{z_o;\rho}^{(\theta,\lambda)}$. The preceding
estimate holds for an arbitrary $\eps\in(0,1)$ with a constant
$c = c(n,p,q,C_1,C_\theta,C_\lambda)>0$ and $\beta = \beta (n,p,q)>0$.
\end{lemma}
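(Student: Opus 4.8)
The plan is to deduce the claim directly from the already proven Lemma~\ref{lem:sobo-poincare-p-2}, which controls $\theta^{p\frac{q-1}{q+1}}\biint_{Q}\frac{|u-a|^p}{\rho^p}\,\d x\d t$ under exactly the hypotheses assumed here. Abbreviating $Q:=Q_\rho^{(\lambda,\theta)}(z_o)$ as in the preceding proofs, I would first invoke Lemma~\ref{lem:Acerbi-Fusco} with $\alpha=\frac{q+1}{2}$ to obtain the pointwise bound $\babs{\u^\frac{q+1}{2}-\a^\frac{q+1}{2}}^2\le c\,(|u|+|a|)^{q-1}|u-a|^2$. Since $p>q+1$, the exponents $\frac{p}{q-1}$ and $\frac{p}{p-q+1}$ form a genuine H\"older pair, so applying H\"older's inequality over $Q$ splits the integral of the right-hand side into the product of $\big(\biint_Q(|u|+|a|)^p\,\d x\d t\big)^{\frac{q-1}{p}}$ and $\big(\biint_Q|u-a|^{\frac{2p}{p-q+1}}\,\d x\d t\big)^{\frac{p-q+1}{p}}$.

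Next I would estimate the two factors separately. For the first, $|a|^p\le\biint_Q|u|^p\,\d x\d t$ combined with the $\theta$-subintrinsic scaling~\eqref{eq:theta-intrinsic-p}$_1$ --- which is available both in case~\eqref{eq:theta-intrinsic-p} and in case~\eqref{eq:theta-degenerate-p}, so that no case distinction is needed at this stage --- bounds it by $c\,\big(\rho^p\theta^{\frac{2p}{q+1}}\big)^{\frac{q-1}{p}}$. For the second, the crucial observation is that $p>q+1$ forces $\frac{2p}{p-q+1}\le p$, so a further application of H\"older's inequality reduces it to $c\,\big(\biint_Q|u-a|^p\,\d x\d t\big)^{2/p}$. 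Since the powers of $\rho$ cancel, multiplying through by $\lambda^{p-2}$ I would arrive at
\[
  \lambda^{p-2}\biint_{Q}\frac{\babs{\u^\frac{q+1}{2}-\a^\frac{q+1}{2}}^2}{\rho^{q+1}}\,\d x\d t
  \le
  c\,\lambda^{p-2}\,\theta^{\frac{2(q-1)}{q+1}}\bigg(\biint_{Q}\frac{|u-a|^p}{\rho^p}\,\d x\d t\bigg)^{2/p}.
\]
Since $p>2$, Young's inequality with the conjugate exponents $\frac{p}{p-2}$ and $\frac{p}{2}$ then splits the right-hand side into $\eps\lambda^p$ plus $c\,\eps^{-\beta}\,\theta^{p\frac{q-1}{q+1}}\biint_{Q}\frac{|u-a|^p}{\rho^p}\,\d x\d t$, for every $\eps\in(0,1)$ and some exponent $\beta>0$.

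Finally, I would bound $\lambda^p$ by the $\lambda$-intrinsic scaling~\eqref{eq:lambda-intrinsic}$_2$ and the remaining term $\theta^{p\frac{q-1}{q+1}}\biint_{Q}\frac{|u-a|^p}{\rho^p}\,\d x\d t$ by Lemma~\ref{lem:sobo-poincare-p-2}, whose hypotheses and admissible range for $\nu$ coincide with those of the present statement. After choosing the parameter $\eps$ in that application as a suitable fixed power of the Young parameter above, reabsorbing constants and replacing $\eps$ by $\eps/c$, the asserted inequality follows. I do not expect a genuine obstacle here: the substantive work is already contained in Lemma~\ref{lem:sobo-poincare-p-2}, and the only points needing attention are routine --- verifying that every H\"older and Young exponent used is admissible throughout the regime $p>q+1>2$ (in particular $\tfrac{2p}{p-q+1}\le p$ and $\tfrac{p}{q-1},\tfrac{p}{p-q+1},\tfrac{p}{p-2},\tfrac{p}{2}>1$), and arranging the two nested applications of Young's inequality and Lemma~\ref{lem:sobo-poincare-p-2} so that the combined coefficient of the $\sup$- and $|Du|^p$-terms stays at most $\eps$ while the negative powers of $\eps$ in front of the $|Du|^{\nu p}$- and $|F|^p$-terms remain finite.
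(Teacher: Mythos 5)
Your proposal is correct and follows essentially the same route as the paper: Lemma~\ref{lem:Acerbi-Fusco} with $\alpha=\frac{q+1}{2}$, H\"older's inequality together with the $\theta$-subintrinsic scaling~\eqref{eq:theta-intrinsic-p}$_1$ to produce the factor $\theta^{\frac{2(q-1)}{q+1}}\big(\biint_Q\frac{|u-a|^p}{\rho^p}\,\d x\d t\big)^{2/p}$, Young's inequality with exponents $\frac{p}{p-2}$ and $\frac{p}{2}$, and finally Lemma~\ref{lem:sobo-poincare-p-2}. The only (immaterial) difference is that the paper first applies H\"older with exponents $\frac{q+1}{q-1}$ and $\frac{q+1}{2}$ and then upgrades the $L^{q+1}$ norms to $L^p$, whereas you use the exponent pair $\frac{p}{q-1}$, $\frac{p}{p-q+1}$ directly; both yield the same intermediate estimate.
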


\begin{proof}

Observe that $p > q+1 > 2$. Applying Lemma~\ref{lem:Acerbi-Fusco} and H\"older's inequality with exponents $\frac{q+1}{q-1}$ and $\frac{q+1}{2}$, we estimate
\begin{align*}
\lambda^{p-2} &\biint_{Q}
          \frac{\babs{ \u^\frac{q+1}{2}-\a^\frac{q+1}{2} }^2}{ \rho^{q+1} } \, \d x \d t \\
          &\leq c\lambda^{p-2} \left( \biint_{Q} \frac{|u|^{q+1}}{\rho^{q+1}} \, \d x\d t\right)^\frac{q-1}{q+1}
          \left( \biint_Q  \frac{|u-a|^{q+1}}{\rho^{q+1}} \, \d x \d t \right)^\frac{2}{q+1}.
\end{align*}
By using H\"older's inequality, $\theta$-subintrinsic scaling~\eqref{eq:theta-intrinsic-p}$_1$ for the first term and using Young's inequality with exponents $\frac{p}{p-2}$ and $\frac{p}{2}$ we further obtain
\begin{align*}
\lambda^{p-2} \biint_{Q}
          \frac{\babs{ \u^\frac{q+1}{2}-\a^\frac{q+1}{2} }^2}{ \rho^{q+1} } \, \d x \d t &\leq c\lambda^{p-2} \theta^{2 \frac{q-1}{q+1}}
          \left( \biint_Q  \frac{|u-a|^{p}}{\rho^{p}} \, \d x \d t  \right)^\frac{2}{p} \\
          &\le \eps \lambda^p + c\eps^{-\beta} \theta^{p \frac{q-1}{q+1}}
          \biint_Q  \frac{|u-a|^{p}}{\rho^{p}} \, \d x \d t.
\end{align*}
The claim follows by using Lemma~\ref{lem:sobo-poincare-p-2} for the latter term.
\end{proof}

\section{Reverse H\"older inequality}
\label{sec:reverse-Holder}

In the next lemma, we combine the energy
estimate~\eqref{energy-estimate} with the Sobolev-Poincar\'e
inequalities from the preceding sections to prove a reverse H\"older
inequality that will be a crucial tool for the proof of the higher integrability.

\begin{lemma} \label{lem:reverse-holder}
Let $q > 1$, $p > \frac{n(q+1)}{n+q+1}$ and $u$ be a weak solution
to~\eqref{eq:dne} in the sense of Definition~\ref{def:weak-sol} and
let $Q_{2\rho}^{(\lambda,\theta)}(z_o) \Subset \Omega_T$ be a cylinder 
for some $\rho > 0$, $\lambda >0$ and $\theta > 0$.
If~\eqref{eq:lambda-intrinsic} together with~\eqref{eq:theta-intrinsic} or~\eqref{eq:theta-degenerate-1}
are satisfied, then the following reverse H\"older inequality holds true
\begin{align*}
\biint_{Q_{\rho}^{(\lambda,\theta)}(z_o)} |Du|^p \, \d x \d t \leq c \left( \biint_{Q_{2\rho}^{(\lambda,\theta)}(z_o)} |Du|^{\nu p} \, \d x \d t \right)^\frac{1}{\nu}	+ c \biint_{Q_{2\rho}^{(\lambda,\theta)}(z_o)} |F|^p \, \d x \d t,
\end{align*}
for $\max \left\{\frac{p-1}{p}, \frac{n}{n+2},\frac{n}{n+2}
  \left(1+\frac{2}{p}- \frac{2}{q} \right), \frac{n(q+1)}{p(n+q+1)}
\right\} \leq \nu \leq 1$ and a constant $c > 0$ depending on $n,p,q,C_o, C_1, C_\lambda, C_\theta$. 
\end{lemma}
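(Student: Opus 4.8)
The plan is to combine the energy inequality of Lemma~\ref{lem:caccioppoli} with the parabolic Sobolev--Poincar\'e inequalities of Sections~\ref{sec:sobolev-fast-diffusion} and~\ref{sec:sobolev-slow-diffusion} and to conclude by the iteration Lemma~\ref{lem:iteration}. I set $a:=(u)_{z_o;\rho}^{(\theta,\lambda)}$ and, for $\rho\le r\le 2\rho$, abbreviate
$$\phi(r):=\sup_{t\in\Lambda_r^{(\lambda)}(t_o)}\lambda^{p-2}\bint_{B_r^{(\theta)}(x_o)}\frac{\babs{\u^{\frac{q+1}{2}}(t)-\a^{\frac{q+1}{2}}}^2}{r^{q+1}}\,\dx+\biint_{Q_r^{(\lambda,\theta)}(z_o)}|Du|^p\,\dx\dt ,$$
which is finite by the regularity assumed on $u$ and which obeys $\biint_{Q_\rho^{(\lambda,\theta)}(z_o)}|Du|^p\,\dx\dt\le\phi(\rho)$. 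For $\rho\le r<s\le2\rho$ I would apply Lemma~\ref{lem:caccioppoli} on $Q_s^{(\lambda,\theta)}(z_o)$ with inner radius $r$ and the mean value $a$; this controls $\phi(r)$ by a fixed multiple of
$$\frac{\theta^{p\frac{q-1}{q+1}}}{(s-r)^p}\biint_{Q_s^{(\lambda,\theta)}}|u-a|^p\,\dx\dt+\frac{\lambda^{p-2}}{s^{q+1}-r^{q+1}}\biint_{Q_s^{(\lambda,\theta)}}\babs{\u^{\frac{q+1}{2}}-\a^{\frac{q+1}{2}}}^2\,\dx\dt+\biint_{Q_s^{(\lambda,\theta)}}|F|^p\,\dx\dt .$$

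Next I would absorb the first two terms by the Sobolev--Poincar\'e inequalities, which apply here because the hypotheses of the present lemma---the $\lambda$-intrinsic coupling~\eqref{eq:lambda-intrinsic} together with~\eqref{eq:theta-intrinsic} or~\eqref{eq:theta-degenerate-1}---are exactly their hypotheses, and because the stated lower bound on $\nu$ is the union of the bounds required in all four of them. If $\frac{n(q+1)}{n+q+1}<p\le q+1$ one invokes Lemma~\ref{lem:sobo-poincare-p} for the $|u-a|^p$-term and Lemma~\ref{lem:sobo-poincare-q+1} for the $|\u^{\frac{q+1}{2}}-\a^{\frac{q+1}{2}}|^2$-term; if instead $q+1<p$ (so automatically $q+1>2$) one invokes Lemma~\ref{lem:sobo-poincare-p-2} and Lemma~\ref{lem:sobo-poincare-q+1-2}. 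Each of these yields, for an arbitrary $\eps\in(0,1)$, a term $\eps\,\phi(s)$ plus $c\eps^{-\beta}\bigl[(\biint_{Q_{2\rho}}|Du|^{\nu p}\,\dx\dt)^{1/\nu}+\biint_{Q_{2\rho}}|F|^p\,\dx\dt\bigr]$, after the harmless passage between the radii $s$, $\rho$ and $2\rho$ (which only costs a fixed measure ratio). Choosing $\eps$ comparable to $\bigl((s-r)/\rho\bigr)^p$ for the first term and to $(s^{q+1}-r^{q+1})/\rho^{q+1}$ for the second, and using $s^{q+1}-r^{q+1}\ge c\,\rho^{q}(s-r)$, one reduces the coefficient of $\phi(s)$ to a fixed small $\vartheta\in(0,1)$ while the remaining factor becomes $c/(s-r)^{\gamma}$ for some $\gamma=\gamma(n,p,q)>0$. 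In this way one arrives at
$$\phi(r)\le\vartheta\,\phi(s)+\frac{c}{(s-r)^{\gamma}}\Big[\Big(\biint_{Q_{2\rho}^{(\lambda,\theta)}(z_o)}|Du|^{\nu p}\,\dx\dt\Big)^{\frac1\nu}+\biint_{Q_{2\rho}^{(\lambda,\theta)}(z_o)}|F|^p\,\dx\dt\Big]\qquad\text{for }\rho\le r<s\le2\rho .$$
Finally, Lemma~\ref{lem:iteration} (with $\alpha=1$, $\beta=\gamma$ and $\vartheta$ chosen small enough for the lemma to be applicable) removes the $\phi(s)$-term and gives $\phi(\rho)\le c\bigl[(\biint_{Q_{2\rho}}|Du|^{\nu p}\,\dx\dt)^{1/\nu}+\biint_{Q_{2\rho}}|F|^p\,\dx\dt\bigr]$; since $\biint_{Q_\rho}|Du|^p\,\dx\dt\le\phi(\rho)$, the assertion follows.

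The step I expect to be the crux is the absorption: one has to ensure that the Sobolev--Poincar\'e inequalities are genuinely applicable on the cylinders appearing in the iteration---the intrinsic scalings~\eqref{eq:lambda-intrinsic}, \eqref{eq:theta-intrinsic}, \eqref{eq:theta-degenerate-1} are hypothesized only for the pair $(Q_\rho,Q_{2\rho})$, so one must verify, via elementary comparisons of averages over nested cylinders, that they persist with suitably enlarged constants where they are needed---and simultaneously to keep track of the powers of $\eps$ and of the measure ratios so that the concluding application of Lemma~\ref{lem:iteration} produces a constant depending only on $n,p,q,C_o,C_1,C_\lambda,C_\theta$. The $\theta$-singular case~\eqref{eq:theta-degenerate-1}, in which the scaling controls $u$ only through the auxiliary bound $\theta\lesssim\lambda$ implied by~\eqref{eq:theta-degenerate-1} and~\eqref{eq:lambda-intrinsic}, requires no separate argument here, since it is already built into the Sobolev--Poincar\'e lemmas and therefore enters only through their hypotheses.
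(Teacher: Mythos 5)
Your proposal follows essentially the same route as the paper's proof: the energy estimate of Lemma~\ref{lem:caccioppoli} on the pair of radii $r<s$, absorption of its right-hand side by the four Sobolev--Poincar\'e lemmas (split according to $q+1\ge p$ or $q+1<p$, whose scaling hypotheses indeed persist on the intermediate cylinders with constants depending only on $n,p,q,C_\lambda,C_\theta$), a radius-dependent choice of $\eps$, and finally the iteration Lemma~\ref{lem:iteration}. The one detail to adjust is that the Sobolev--Poincar\'e lemmas applied at radius $s$ require the mean value $a=(u)_{s}^{(\lambda,\theta)}$ rather than your fixed $a=(u)_{\rho}^{(\lambda,\theta)}$, so the iterated quantity $\phi$ should carry the radius-dependent mean, the passage between the two means being supplied by Lemma~\ref{lem:uavetoa} exactly as in the paper.
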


\begin{proof}

We omit the center point $z_o$ from the notation for simplicity. Let $\rho \leq r < s \leq 2\rho$ and denote $a_\sigma = (u)_\sigma^{(\lambda,\theta)}$ for $\sigma \in \{r,s\}$. Lemma~\ref{lem:caccioppoli} implies 
\begin{align*}
\sup_{t\in \Lambda_r^{(\lambda)}}&\, \bint_{B_r^{(\theta)}} \frac{\babs{ \u^\frac{q+1}{2}(t)-\a_r^\frac{q+1}{2} }^2}{\lambda^{2-p} r^{q+1}} \d x + \biint_{Q_r^{(\lambda,\theta)}} |Du|^p\, \d x\d t \\
& \leq c \biint_{Q_s^{(\lambda,\theta)}} \left[ \theta^\frac{p(q-1)}{q+1} \frac{|u-a_r |^p}{(s-r)^p}+ \frac{\babs{ \u^\frac{q+1}{2}-\a_r^\frac{q+1}{2} }^2}{ \lambda^{2-p} (s^{q+1}-r^{q+1} )} + |F|^p  \right] \d x\d t  \\
&\leq c \mathcal R_{r,s}^p \biint_{Q_s^{(\lambda,\theta)}}  \theta^\frac{p(q-1)}{q+1} \frac{|u-a_s |^p}{s^p} \, \d x \d t + c \mathcal R_{r,s}^{q+1} \biint_{Q_s^{(\lambda,\theta)}} \frac{\babs{ \u^\frac{q+1}{2}-\a_s^\frac{q+1}{2} }^2}{ \lambda^{2-p} s^{q+1} } \, \d x \d t \\
&\qquad + \biint_{Q_s^{(\lambda,\theta)}} |F|^p \, \d x \d t \\
&=: \mathrm{I} + \mathrm{II} + \mathrm{III},
\end{align*}
by using also Lemma~\ref{lem:uavetoa} and denoting $\mathcal R_{r,s} = \frac{s}{s-r}$.
We apply Lemma~\ref{lem:sobo-poincare-p} for I and Lemma~\ref{lem:sobo-poincare-q+1} for II if $q+1 \geq p$, and Lemmas~\ref{lem:sobo-poincare-p-2} and~\ref{lem:sobo-poincare-q+1-2} respectively if $p > q+1$, which yields
\begin{align*}
\sup_{t\in \Lambda_r^{(\lambda)}}&\, \bint_{B_r^{(\theta)}} \frac{\babs{ \u^\frac{q+1}{2}(t)-\a_r^\frac{q+1}{2} }^2}{\lambda^{2-p} r^{q+1}} \d x + \biint_{Q_r^{(\lambda,\theta)}} |Du|^p\, \d x\d t \\
&\leq \eps c \mathcal R_{r,s}^{p^\sharp} \left( \sup_{t \in \Lambda_s^{(\lambda)}}  \bint_{B_s^{(\theta)}} \frac{\babs{ \u^\frac{q+1}{2}(t)-\a_s^\frac{q+1}{2} }^2}{\lambda^{2-p} s^{q+1} } \, \d x + \biint_{Q_s^{(\lambda,\theta)}} |Du|^p \, \d x \d t \right) \\
&\phantom{+} + \eps^{-\beta} c \mathcal R_{r,s}^{p^\sharp}\left[ \left( \biint_{Q_{2\rho}^{(\lambda,\theta)}} |Du|^{\nu p} \, \d x \d t  \right)^\frac{1}{\nu} + \biint_{Q_{2\rho}^{(\lambda,\theta)}} |F|^p \, \d x \d t  \right],
\end{align*}
for every $\eps\in(0,1)$. 
We fix $\eps = \frac{1}{2 c \mathcal R_{r,s}^{p^\sharp}}$, and use Lemma~\ref{lem:iteration} to conclude the result.
\end{proof}

We end this section with a technical lemma that will be needed to
prove the $\theta$-singular scaling~\eqref{eq:theta-degenerate-1} in
the cases in which the $\theta$-intrinsic
scaling~\eqref{eq:theta-intrinsic} is not available, see
Section~\ref{sec:reverse-Holder-final-section}.

\begin{lemma}\label{lem:theta-absorb}
Let $q > 1$, $p > \frac{n(q+1)}{n+q+1}$ and $u$ be a weak solution
to~\eqref{eq:dne} in the sense of Definition~\ref{def:weak-sol} and
let $Q_{2\rho}^{(\lambda,\theta)}(z_o) \Subset \Omega_T$ be a cylinder
for some $\rho>0$, $\lambda >0$ and $\theta > 0$.
If~\eqref{eq:lambda-intrinsic}$_1$ and~\eqref{eq:theta-intrinsic} with
$C_\theta = 1$  are satisfied, we have
$$
\theta^\frac{2}{q+1} \leq c \lambda^\frac{2}{q+1} + \frac{3}{4} \left( \biint_{Q_{\rho/2}^{(\lambda,\theta)}(z_o)} \frac{|u|^{p^\sharp}}{(\rho/2)^{p^\sharp}} \, \d x \d t \right)^\frac{1}{p^\sharp}
$$
for $c = c(n,p,q,C_o,C_1,C_\lambda) > 0$.
\end{lemma}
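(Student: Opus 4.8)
The plan is to deduce the estimate from the upper half of the $\theta$-intrinsic scaling combined with a parabolic Poincar\'e-type bound, where the small constant $\tfrac34$ is produced by the mismatch between the radii $\rho$ and $\tfrac\rho2$ in the two averages. Write $p^\sharp=\max\{p,q+1\}$, $Q_r:=Q_r^{(\lambda,\theta)}(z_o)$ and $w:=(u)_{Q_{\rho/2}}\in\R^N$. By \eqref{eq:theta-intrinsic} with $C_\theta=1$ we have $\theta^{\frac{2p^\sharp}{q+1}}\le\biint_{Q_\rho}\frac{|u|^{p^\sharp}}{\rho^{p^\sharp}}\dx\dt$. Splitting $u=(u-w)+w$, applying the elementary convexity inequality $(a+b)^{p^\sharp}\le\mu^{1-p^\sharp}a^{p^\sharp}+(1-\mu)^{1-p^\sharp}b^{p^\sharp}$ with a parameter $\mu\in(0,1)$, and using Jensen's inequality $|w|^{p^\sharp}\le\biint_{Q_{\rho/2}}|u|^{p^\sharp}\dx\dt$, we obtain
\begin{equation}\label{eq:ta-split}
\theta^{\frac{2p^\sharp}{q+1}}\le\mu^{1-p^\sharp}\biint_{Q_\rho}\frac{|u-w|^{p^\sharp}}{\rho^{p^\sharp}}\dx\dt+(1-\mu)^{1-p^\sharp}2^{-p^\sharp}\biint_{Q_{\rho/2}}\frac{|u|^{p^\sharp}}{(\rho/2)^{p^\sharp}}\dx\dt .
\end{equation}
Since $p^\sharp=\max\{p,q+1\}>2$, one may fix $\mu=\mu(p,q)\in(0,1)$ with $(1-\mu)^{1-p^\sharp}2^{-p^\sharp}<(\tfrac34)^{p^\sharp}$; writing $c_\sharp:=\mu^{1-p^\sharp}$, the whole assertion follows once we establish the \emph{oscillation estimate}
\begin{equation}\label{eq:ta-osc}
\biint_{Q_\rho}\frac{|u-w|^{p^\sharp}}{\rho^{p^\sharp}}\dx\dt\le\eta\,\theta^{\frac{2p^\sharp}{q+1}}+c(\eta)\,\lambda^{\frac{2p^\sharp}{q+1}}
\end{equation}
for every $\eta\in(0,1)$, because then a choice of $\eta$ small in terms of $c_\sharp,\mu$ allows us to absorb the $\theta$-term in \eqref{eq:ta-split} into the left-hand side and to take the $p^\sharp$-th root, using the subadditivity of $t\mapsto t^{1/p^\sharp}$.

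For \eqref{eq:ta-osc} I would first replace $w$ by $a:=(u)_{Q_\rho}$: by Lemma~\ref{lem:uavetoa} (with $\alpha=1$) and $|Q_\rho|\le c|Q_{\rho/2}|$ one has $\biint_{Q_\rho}|u-w|^{p^\sharp}\le c\biint_{Q_\rho}|u-a|^{p^\sharp}$, a replacement that does \emph{not} re-introduce the mean over $Q_{\rho/2}$. Two further ingredients are needed. The first is the energy bound
\begin{equation}\label{eq:ta-energy}
\sup_{t\in\Lambda_\rho^{(\lambda)}}\lambda^{p-2}\bint_{B_\rho^{(\theta)}}\frac{\babs{\u^\frac{q+1}{2}(t)-\a^\frac{q+1}{2}}^2}{\rho^{q+1}}\dx+\biint_{Q_\rho}|Du|^p\dx\dt\le c\,\lambda^p ,
\end{equation}
obtained by combining the energy estimate (Lemma~\ref{lem:caccioppoli}) on the nested cylinders $Q_r\subset Q_s$, $\rho\le r<s\le2\rho$, with the Sobolev--Poincar\'e inequalities of Sections~\ref{sec:sobolev-fast-diffusion}--\ref{sec:sobolev-slow-diffusion} and the absorption Lemma~\ref{lem:iteration}, exactly as in the proof of Lemma~\ref{lem:reverse-holder}; the only difference is that here only the subintrinsic bound \eqref{eq:lambda-intrinsic}$_1$ is available, so one keeps the resulting $\lambda^p$-terms instead of re-expressing them through the gradient integral via \eqref{eq:lambda-intrinsic}$_2$ (which plays no other role in those arguments). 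The second is a gluing bound: Lemma~\ref{lem:gluing} provides $\hat\rho\in[\tfrac\rho2,\rho]$ such that, by H\"older's inequality and \eqref{eq:lambda-intrinsic}$_1$,
\begin{equation}\label{eq:ta-gluing}
\sup_{t_1,t_2\in\Lambda_\rho^{(\lambda)}}\babs{(\u^q)_{\hat\rho}^{(\theta)}(t_1)-(\u^q)_{\hat\rho}^{(\theta)}(t_2)}\le c\,\lambda\,\theta^{\frac{q-1}{q+1}}\rho^q=:\mathcal G ,
\end{equation}
and a short computation with Young's inequality (exponents $\tfrac{2q}{q+1}$ and $\tfrac{2q}{q-1}$) gives $\rho^{-p^\sharp}\mathcal G^{p^\sharp/q}\le\eta\,\theta^{\frac{2p^\sharp}{q+1}}+c(\eta)\,\lambda^{\frac{2p^\sharp}{q+1}}$.

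In the case $q+1\ge p$ we have $p^\sharp=q+1$, and Lemma~\ref{lem:a-b} with $\alpha=\tfrac{q+1}{2}$ gives $|u-a|^{q+1}\le c\babs{\u^\frac{q+1}{2}-\a^\frac{q+1}{2}}^2$; hence $\biint_{Q_\rho}\frac{|u-a|^{q+1}}{\rho^{q+1}}\le c\,\lambda^{2-p}\bigl(\lambda^{p-2}\biint_{Q_\rho}\frac{|\u^{(q+1)/2}-\a^{(q+1)/2}|^2}{\rho^{q+1}}\bigr)$, and the parenthesized quantity is precisely the left-hand side of Lemma~\ref{lem:sobo-poincare-q+1} (used in the version that only needs \eqref{eq:lambda-intrinsic}$_1$, so that its right-hand side additionally carries a $c\lambda^p$-term), whose right-hand side ---the supremum term, $\biint_{Q_\rho}|Du|^p$, $(\biint_{Q_\rho}|Du|^{\nu p})^{1/\nu}$ and $\biint_{Q_\rho}|F|^p$--- is $\le c\lambda^p$ by \eqref{eq:ta-energy}, H\"older's inequality and \eqref{eq:lambda-intrinsic}$_1$; this yields $\biint_{Q_\rho}\frac{|u-a|^{q+1}}{\rho^{q+1}}\le c\lambda^2=c\lambda^{\frac{2p^\sharp}{q+1}}$, i.e.\ \eqref{eq:ta-osc}. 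In the case $q+1<p$ we have $p^\sharp=p>2$ and write $u-a=\bigl(u-(u)_\rho^{(\theta)}(t)\bigr)+\bigl((u)_\rho^{(\theta)}(t)-a\bigr)$. For the first term, Lemma~\ref{lem:p-slicewise-estimate} gives directly $\biint_{Q_\rho}\frac{|u-(u)_\rho^{(\theta)}(t)|^p}{\rho^p}\le c\lambda^{\frac{2p}{q+1}}$. For the second, since $a=(u)_{Q_\rho}=\bint_{\Lambda_\rho^{(\lambda)}}(u)_\rho^{(\theta)}(t)\dt$, it is controlled by the double time integral of $\rho^{-p}|(u)_\rho^{(\theta)}(t)-(u)_\rho^{(\theta)}(\tau)|^p$; passing from the slice mean $(u)_\rho^{(\theta)}$ to $\big[(\u^q)_{\hat\rho}^{(\theta)}\big]^{1/q}$ by Lemma~\ref{lem:uavetoa}, Lemma~\ref{lem:a-b} (with $\alpha=q$) and Jensen's inequality (here $q+1<p$ is used), the errors are again of the form $\biint_{Q_\rho}\frac{|u-(u)_\rho^{(\theta)}(t)|^p}{\rho^p}\le c\lambda^{\frac{2p}{q+1}}$, while the remaining term is the time oscillation of $(\u^q)_{\hat\rho}^{(\theta)}$, bounded by $\rho^{-p}\mathcal G^{p/q}\le\eta\theta^{\frac{2p}{q+1}}+c(\eta)\lambda^{\frac{2p}{q+1}}$ via \eqref{eq:ta-gluing}. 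Adding these contributions gives \eqref{eq:ta-osc} in this case too, which closes the argument.

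The point requiring care ---and the reason one obtains the \emph{sharp} constant $\tfrac34$ rather than merely some constant $<1$--- is that the oscillation estimate \eqref{eq:ta-osc} must be carried out without re-introducing the average over $Q_{\rho/2}$: every ball appearing in it (in particular $B_{\hat\rho}^{(\theta)}$ with $\hat\rho\in[\tfrac\rho2,\rho]$) is only ever compared with $B_\rho^{(\theta)}$, never with $B_{\rho/2}^{(\theta)}$, so that the $Q_{\rho/2}$-average enters exactly once, through Jensen's inequality for $|w|^{p^\sharp}$, with the clean normalisation factor $2^{-p^\sharp}$. The other delicate point is the verification that the Sobolev--Poincar\'e machinery of Sections~\ref{sec:sobolev-fast-diffusion}--\ref{sec:sobolev-slow-diffusion}, hence \eqref{eq:ta-energy}, relies only on the $\lambda$-subintrinsic bound \eqref{eq:lambda-intrinsic}$_1$ together with the $\theta$-intrinsic scaling, which is indeed the case under the hypotheses here.
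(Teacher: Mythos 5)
Your argument is correct, and its skeleton (use the superintrinsic half of \eqref{eq:theta-intrinsic}, split off a mean value over $Q_{\rho/2}$ so that the radius mismatch produces a factor strictly below $1$, bound the remaining oscillation by $\eta\,\theta^{\frac{2p^\sharp}{q+1}}+c(\eta)\lambda^{\frac{2p^\sharp}{q+1}}$, absorb) coincides with the paper's, but the execution differs in two ways. First, you split at the level of $p^\sharp$-th powers via weighted convexity and Jensen with the arithmetic mean $(u)_{Q_{\rho/2}}$, whereas the paper takes $\frac{1}{p^\sharp}$-th roots first and splits via the triangle inequality around the power mean $[(\u^q)_{\widehat Q}]^{1/q}$, reaching the $Q_{\rho/2}$-average through Lemma~\ref{lem:uavetoa}; this is cosmetic, and both yield the margin needed for the constant $\tfrac34$ (in your version one must additionally absorb the factor $(1-c_\sharp\eta)^{-1/p^\sharp}$ into the slack $(1-\mu)^{(1-p^\sharp)/p^\sharp}/2<3/4$, which you correctly anticipate). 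Second, and more substantially, for the oscillation estimate you invoke the full Sobolev--Poincar\'e machinery of Sections~\ref{sec:sobolev-fast-diffusion}--\ref{sec:sobolev-slow-diffusion} together with a reverse-H\"older-type energy bound $\sup_t\lambda^{p-2}\bint_B\rho^{-(q+1)}\babs{\u^{\frac{q+1}{2}}-\a^{\frac{q+1}{2}}}^2\dx+\biint_{Q_\rho}|Du|^p\dx\dt\le c\lambda^p$, which forces you to verify that those lemmas survive with only \eqref{eq:lambda-intrinsic}$_1$ at the cost of an extra $c\eps\lambda^p$ on their right-hand sides (a claim that is true --- inspection shows \eqref{eq:lambda-intrinsic}$_2$ is used in those proofs only to convert $\eps\lambda^p$ back into gradient terms --- but is an extra verification burden). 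The paper avoids this entirely: it estimates the spatial oscillation with the lightweight slicewise Lemmas~\ref{lem:q+1-slicewise-estimate} and~\ref{lem:p-slicewise-estimate} applied with $a=0$, controls the resulting sup-term by a single direct application of the Caccioppoli inequality with $a=0$ (giving $c[\theta^p+\lambda^{p-2}\theta^2+\lambda^p]$, whose $\theta$-powers are removed by one Young inequality using that the relevant exponents sum to one), and treats the temporal oscillation by the gluing lemma exactly as you do. So your route buys a cleaner $\lambda^p$-only energy bound at the price of re-running the heavy Sobolev--Poincar\'e proofs under weakened hypotheses, while the paper's is leaner and self-contained; both are valid.
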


\begin{proof}
We apply first~\eqref{eq:theta-intrinsic}$_2$ with $C_\theta = 1$,
then the triangle inequality and Lemma~\ref{lem:uavetoa}, and finally,
the triangle inequality again. In this way, we get
\begin{align*}
  \theta^\frac{2}{q+1}
  &\leq \left( \biint_{Q_\rho^{(\lambda,\theta)}}
    \frac{|u|^{p^\sharp}}{\rho^{p^\sharp}} \, \d x \d t
    \right)^\frac{1}{p^\sharp}\\
  &\leq c(n,p,q) \left( \biint_{Q_\rho^{(\lambda,\theta)}} \frac{\big|u -
    (\u^q)_{\widehat Q}^\frac{1}{q}\big|^{p^\sharp}}{\rho^{p^\sharp}} \,
    \d x \d t \right)^\frac{1}{p^\sharp}
    + \frac{\Big|(\u^q)_{Q_{\rho/2}^{(\lambda,\theta)}}\Big|^\frac{1}{q}}{\rho} \\
  &\leq c(n,p,q) \left( \biint_{Q_\rho^{(\lambda,\theta)}} \frac{\big|u - (\u^q)_{\widehat B}^\frac{1}{q}(t)\big|^{p^\sharp}}{\rho^{p^\sharp}} \, \d x \d t \right)^\frac{1}{p^\sharp} \\
&\qquad+ c(n,p,q)\left( \biint_{Q_\rho^{(\lambda,\theta)}} \frac{\big|(\u^q)^\frac{1}{q}_{\widehat B}(t) - (\u^q)_{\widehat Q}^\frac{1}{q}\big|^{p^\sharp}}{\rho^{p^\sharp}} \, \d x \d t \right)^\frac{1}{p^\sharp} \\
&\qquad+ \frac{\Big|(\u^q)_{Q_{\rho/2}^{(\lambda,\theta)}}\Big|^\frac{1}{q}}{\rho} \\
&=: \mathrm{I} + \mathrm{II} + \mathrm{III}.
\end{align*}
Here we used the abbreviations $\widehat B=B_{\hat\rho}^{(\theta)}$
and $\widehat Q:=\widehat B\times\Lambda_\rho^{(\lambda)}$, with the
radius $\hat\rho\in[\frac\rho2,\rho]$ provided by
Lemma~\ref{lem:gluing}. 
Observe that by H\"older's inequality
$$
\mathrm{III} \leq \frac{1}{2} \left( \biint_{Q_{\rho/2}^{(\lambda,\theta)}} \frac{|u|^{p^\sharp}}{(\rho/2)^{p^\sharp}} \, \d x \d t \right)^\frac{1}{p^\sharp}.
$$
By Lemmas~\ref{lem:a-b},~\ref{lem:uavetoa} and~\ref{lem:gluing} we obtain
\begin{align*}
\mathrm{II} &\leq c(n,p,q) \rho^{-1} \sup_{t,\tau \in \Lambda_{\rho}^{(\lambda)}} |(\u^q)_{\widehat B}(t) - (\u^q)_{\widehat B}(\tau)|^\frac{1}{q} \\
&\leq c(n,p,q,C_1) \lambda^\frac{2-p}{q} \theta^\frac{q-1}{q(q+1)} \left( \biint_{Q_{\rho}^{(\lambda,\theta)}} |Du|^{p-1} + |F|^{p-1} \, \d x \d t \right)^\frac{1}{q} \\
&\leq c(n,p,q,C_1,C_\lambda) \lambda^\frac{1}{q} \theta^\frac{q-1}{q(q+1)} \leq \eps \theta^\frac{2}{q+1} + c_\eps \lambda^\frac{2}{q+1},
\end{align*}
in which $c_\eps$ depends on $\eps, n,p,q,C_1$ and $C_\lambda$. On the
last line we also used~\eqref{eq:lambda-intrinsic}$_1$ and Young's
inequality with exponents $\frac{2q}{q+1}$ and $\frac{2q}{q-1}$.

For the estimate of $\mathrm{I}$, we consider the case $p > q+1$ first. In
this case, Lemmas~\ref{lem:uavetoa} and~\ref{lem:p-slicewise-estimate}
imply 
\begin{align*}
\mathrm{I} \leq c \left( \biint_{Q_\rho^{(\lambda,\theta)}} \frac{\big|u - (u)_{\rho}^{(\theta)}(t)\big|^{p}}{\rho^{p}} \, \d x \d t \right)^\frac{1}{p} \leq c \lambda^\frac{2}{q+1}
\end{align*}
for $c = c(n,p,q,C_\lambda)$.
Then let us consider the case $q+1 \geq p$. By using Lemma~\ref{lem:q+1-slicewise-estimate} with $a = 0$ we have
\begin{equation} \label{eq:I-est-absorb}
\mathrm{I} \leq c \lambda^{ \frac{2}{q+1} \cdot\frac{2(q+1)+ p(p-2)}{2(q+1) +p(q-1)}} \left( \sup_{t\in \Lambda_\rho^{(\lambda)}} \bint_{B_\rho^{(\theta)}} \frac{|u|^{q+1}}{\lambda^{2-p}\rho^{q+1}} \, \d x \right)^{\frac{2}{q+1} \cdot\frac{q+1-p}{2(q+1) + p(q-1)}}
\end{equation}
for $c = c(n,p,q,C_\lambda)$. By using the energy estimate from Lemma~\ref{lem:caccioppoli} with $a= 0$ we obtain
\begin{align*}
\sup_{t\in \Lambda_\rho^{(\lambda)}} \bint_{B_\rho^{(\theta)}} \frac{|u|^{q+1}}{\lambda^{2-p}\rho^{q+1}} \, \d x &\leq c \biint_{Q_{2\rho}^{(\lambda,\theta)}} \theta^\frac{p(q-1)}{q+1} \frac{|u|^p}{\rho^p} + \lambda^{p-2} \frac{|u|^{q+1}}{\rho^{q+1}} + |F|^p \, \d x \d t \\
&\leq c \left[ \theta^p + \lambda^{p-2} \theta^2 + \lambda^p \right]
\end{align*}
for $c = c(p,q,C_o,C_1,C_\lambda)$, where we also
used~\eqref{eq:theta-intrinsic}$_1$
and~\eqref{eq:lambda-intrinsic}$_1$. By plugging this
into~\eqref{eq:I-est-absorb}, observing that $\frac{2(q+1)+
  p(p-2)}{2(q+1) +p(q-1)} + p \frac{q+1-p}{2(q+1) + p(q-1)} = 1$, we
use Young's inequality to the first two terms including $\theta$ to
conclude
\begin{equation*}
  \mathrm{I}
  \le
  \eps \theta^\frac{2}{q+1} + c_\eps \lambda^\frac{2}{q+1},  
\end{equation*}
in which $c_\eps$ depends on $\eps, n,p,q,C_o,C_1$ and
$C_\lambda$. Collecting the estimates, we obtain in any case
\begin{equation*}
  \theta^\frac{2}{q+1}
  \leq
  2\eps \theta^\frac{2}{q+1} + c_\eps \lambda^\frac{2}{q+1}
  +
  \frac{1}{2} \left( \biint_{Q_{\rho/2}^{(\lambda,\theta)}} \frac{|u|^{p^\sharp}}{(\rho/2)^{p^\sharp}} \, \d x \d t \right)^\frac{1}{p^\sharp}.
\end{equation*}
By choosing $\eps = \frac{1}{6}$ the claim follows.

\end{proof}

\section{Proof of the higher integrability}
\label{sec:proof-main-thm}

This section is devoted to the proof of our main result,
Theorem~\ref{thm:main}.
Fix $Q_{4R}$ with $R > 0$ such that $Q_{8R} \Subset \Omega_T$ and
\begin{equation}\label{choice-lambda_o}
\lambda_o \geq 1 + \left( \biint_{Q_{4R}}
  \frac{|u|^{p^\sharp}}{(4R)^{p^\sharp}} \, \d x \d t
\right)^\frac{d}{p},
\end{equation}
where the parameter $d\ge1$ is defined in~\eqref{def-d}. Note that we
can rewrite it as 
\begin{equation*}
  d=\frac{p(q+1)}{(q+1)^2+(p^\sharp+n)(p-p^\sharp)}.
\end{equation*}

Fix $\lambda \geq \lambda_o$ and 
\begin{equation}\label{choice-R_o}
  R_o=\min\{\lambda^\frac{p-2}{q+1},\lambda^\frac{q-1}{q+1}\}R=\lambda^{\frac{p+q-1-p^\sharp}{q+1}}R.
\end{equation}
Note that $R_o$ might be larger than $R$ for certain values of
parameters, but by definition of $Q_{2\rho}^{(\lambda,\theta)}(z_o)$, 
we still have the inclusion
\begin{equation*}
  Q_{2\rho}^{(\lambda,\theta)}(z_o)
  \subset
 Q_{2R}(z_o)\subset Q_{4R} 
\end{equation*}
for every $z_o \in Q_{2R}$, $\theta
\geq \lambda$ and $\rho \leq R_o$.

The crucial step of the proof is to construct a suitable family of
parabolic cylinders, which satisfy a Vitali type covering property and
for which~\eqref{eq:lambda-intrinsic} and either
~\eqref{eq:theta-intrinsic}
or~\eqref{eq:theta-degenerate-1} hold true, so that the reverse
H\"older inequality from Lemma~\ref{lem:reverse-holder} is applicable.

\subsection{Construction of a non-uniform system of cylinders}
\label{subsec:cylinder-construction}
For fixed $z_o \in Q_{2R}$, $\lambda \geq \lambda_o$ and $\rho \in (0,R_o]$ we define
$$
\tilde \theta^{(\lambda)}_{z_o; \rho} := \inf \left\{ \theta \in [\lambda,\infty): \frac{1}{|Q_\rho|} \iint_{Q_\rho^{(\lambda,\theta)}(z_o)} \frac{|u|^{p^\sharp}}{\rho^{p^\sharp}}\, \d x\d t \leq \lambda^{2-p} \theta^\frac{2p^\sharp + n (1-q)}{1+q}  \right\}.
$$
Observe that the integral above converges to zero when $\theta \to
\infty$, while the right hand side blows up with speed
$\theta^\frac{2p^\sharp + n (1-q)}{1+q}$ provided that $q <
\frac{n+2}{n-2}$ if $p \leq q+1$, and $p > \frac{n}{2}(q-1)$ if $p >
q+1$. Thus, there exists a unique $\tilde \theta^{(\lambda)}_{z_o;
  \rho}$ for fixed $z_o,\rho$ and $\lambda$ satisfying the above conditions. In case $\lambda$ and $z_o$ are clear from the context, we omit them from the notation.

By definition, one of the following two alternatives occur; either 
\begin{equation*}
\tilde \theta_\rho = \lambda \quad \text{and}\quad \biint_{Q_\rho^{(\lambda,\tilde \theta_{\rho})}(z_o)}\frac{|u|^{p^\sharp}}{\rho^{p^\sharp}} \, \d x \d t \leq \tilde \theta_\rho^\frac{2p^\sharp}{q+1} = \lambda^\frac{2p^\sharp}{q+1},
\end{equation*}
or
\begin{equation} \label{eq:theta-tilde-intrinsic}
\tilde \theta_\rho > \lambda \quad \text{and}\quad \biint_{Q_\rho^{(\lambda,\tilde \theta_\rho)}(z_o)}\frac{|u|^{p^\sharp}}{\rho^{p^\sharp}} \, \d x \d t = \tilde \theta_\rho^\frac{2p^\sharp}{q+1}.
\end{equation}
Note that if $\tilde \theta_{R_o} > \lambda$, it follows from
\eqref{choice-lambda_o} that
\begin{align} \label{eq:theta-R0-lambda}
\tilde \theta_{R_o}^\frac{2p^\sharp + n(1-q)}{q+1} &= \frac{\lambda^{p-2}}{|Q_{R_o}|} \iint_{Q_{R_o}^{(\lambda,\tilde \theta_{R_o})}(z_o)} \frac{|u|^{p^\sharp}}{R_o^{p^\sharp}} \, \d x \d t \nonumber \\
&\leq \lambda^{p-2} \left( \frac{R}{R_o} \right)^{n+p^\sharp+q+1} \biint_{Q_{R}(z_o)} \frac{|u|^{p^\sharp}}{R^{p^\sharp}} \, \d x \d t \nonumber \\
&\leq 4^{n+p^\sharp+q+1} \lambda^{p-2-(n+p^\sharp+q+1)\frac{p+q-1-p^\sharp}{q+1}} \lambda_o^\frac{p}{d} \nonumber \\
&\leq 4^{n+p^\sharp+q+1} \lambda^{\frac{2p^\sharp + n(1-q)}{q+1}}.
\end{align}
In the last estimate, we distinguished between the cases $p\ge q+1$
and $\frac{n(q+1)}{n+q+1}<p<q+1$ and used the fact $\lambda\ge\lambda_o$.

The mapping $(0,R_o] \ni \rho \mapsto \tilde \theta_\rho$ is continuous by a similar argument as in~\cite{Boegelein-Duzaar-Scheven:2022} (see also~\cite{Boegelein-Duzaar-Scheven:2018, Boegelein-Duzaar-Korte-Scheven,BDKS:2018}) but it is not non-increasing in general.
Therefore, we define 
$$
\theta_{z_o; \rho}^{(\lambda)} := \max_{r\in [\rho, R_o]} \tilde \theta_{z_o;r}^{(\lambda)},
$$
which is clearly continuous (since $\tilde \theta_\rho$ is) and non-increasing with respect to $\rho$.
Furthermore, let 
\[ \tilde \rho :=
\begin{cases}
R_o, &\text{if } \theta_\rho = \lambda, \\
\inf \{s \in [\rho,R_o] : \theta_s = \tilde \theta_s \}, &\text{if } \theta_\rho > \lambda.
\end{cases}
\]
Observe that $\theta_r = \tilde \theta_{\tilde \rho}$ for every $r \in
[\rho,\tilde \rho]$. The following lemma summarizes some basic
properties of the parameter $\theta_\rho$. 

\begin{lemma}  \label{lem:theta-rho-prop}
Let $\theta_\rho$ be constructed as above. Then we have
\begin{enumerate}[(i)]
\item $
  \displaystyle
  \biint_{Q_s^{(\lambda,\theta_\rho)}} \frac{|u|^{p^\sharp}}{s^{p^\sharp}} \, \d x \d t \leq \theta_\rho^{\frac{2p^\sharp}{q+1}} \quad \text{for every } 0< \rho \leq s \leq R_o,
$

\item $\theta_\rho \leq \left( \frac{s}{\rho} \right)^\frac{(q+1)(n+ p^\sharp +q+ 1)}{2 p^\sharp + n(1-q)} \theta_s \quad \text{for every } 0< \rho \leq s \leq R_o$,

\item $\theta_\rho \leq \left( \frac{4 R_o}{\rho} \right)^\frac{(q+1)(n+ p^\sharp +q+ 1)}{2 p^\sharp + n(1-q)} \lambda \quad \text{for every } 0 < \rho \leq R_o$. 
\end{enumerate}

\end{lemma}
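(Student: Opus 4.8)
The three assertions are elementary consequences of the construction of $\theta_\rho$ as a running maximum of the continuous function $\tilde\theta_r$, together with the defining property of $\tilde\theta_r$ and the a priori bound~\eqref{eq:theta-R0-lambda}. I would treat them in order, since each builds on the previous one.

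For (i), the key observation is that $\theta_\rho \geq \tilde\theta_s$ for every $s\in[\rho,R_o]$, simply because $\theta_\rho = \max_{r\in[\rho,R_o]}\tilde\theta_r$ and $s$ lies in the range of the maximum. Since $\theta\mapsto \frac{1}{|Q_s|}\iint_{Q_s^{(\lambda,\theta)}} \frac{|u|^{p^\sharp}}{s^{p^\sharp}}\dx\dt$ is non-increasing in $\theta$ — the cylinder $Q_s^{(\lambda,\theta)}$ shrinks in the spatial directions as $\theta$ grows, since $q>1$ — while $\lambda^{2-p}\theta^{\frac{2p^\sharp+n(1-q)}{q+1}}$ is non-decreasing in $\theta$ (here one uses $q<\frac{n+2}{n-2}$ if $p\le q+1$ and $p>\frac n2(q-1)$ if $p>q+1$, exactly as recorded after the definition of $\tilde\theta$), the defining inequality for $\tilde\theta_s$ remains valid when $\tilde\theta_s$ is replaced by any larger value, in particular by $\theta_\rho$. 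This gives
\begin{equation*}
  \frac{1}{|Q_s|}\iint_{Q_s^{(\lambda,\theta_\rho)}(z_o)} \frac{|u|^{p^\sharp}}{s^{p^\sharp}}\dx\dt
  \le \lambda^{2-p}\theta_\rho^{\frac{2p^\sharp+n(1-q)}{q+1}},
\end{equation*}
and multiplying through by $|Q_s| = c(n)\,\lambda^{2-p}\theta_\rho^{\frac{n(1-q)}{1+q}}s^{n+q+1}/s^{n+q+1}$... more precisely, recalling $|Q_s^{(\lambda,\theta_\rho)}| = c(n)\,(\theta_\rho^{\frac{1-q}{1+q}}s)^n\cdot\lambda^{2-p}s^{q+1}$, dividing the defining inequality appropriately converts $\lambda^{2-p}\theta_\rho^{\frac{2p^\sharp+n(1-q)}{q+1}}$ into $\theta_\rho^{\frac{2p^\sharp}{q+1}}$ after cancelling the $\lambda^{2-p}\theta_\rho^{\frac{n(1-q)}{q+1}}$ coming from $|Q_s|$; this is precisely the normalization already used in passing from the $\inf$-defining condition to the dichotomy~\eqref{eq:theta-tilde-intrinsic}. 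So (i) follows.

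For (ii), fix $0<\rho\le s\le R_o$ and pick $r_*\in[\rho,R_o]$ with $\tilde\theta_{r_*}=\theta_\rho$. If $r_*\ge s$, then $\theta_\rho=\tilde\theta_{r_*}\le\theta_s$ and the claim is trivial since $s/\rho\ge1$ and the exponent is positive. If $r_*<s$, I use the scaling behaviour of the defining relation: from~\eqref{eq:theta-tilde-intrinsic} (or its non-strict form when $\tilde\theta_{r_*}=\lambda$) we have $\iint_{Q_{r_*}^{(\lambda,\tilde\theta_{r_*})}} \frac{|u|^{p^\sharp}}{r_*^{p^\sharp}}\dx\dt = \tilde\theta_{r_*}^{\frac{2p^\sharp}{q+1}}$, i.e. $\frac{1}{|Q_{r_*}|}\iint = \lambda^{2-p}\tilde\theta_{r_*}^{\frac{2p^\sharp+n(1-q)}{q+1}}$; enlarging the integration domain from $Q_{r_*}^{(\lambda,\tilde\theta_{r_*})}$ to $Q_s^{(\lambda,\tilde\theta_{r_*})}$ and comparing measures, one obtains
\begin{equation*}
  \lambda^{2-p}\tilde\theta_{r_*}^{\frac{2p^\sharp+n(1-q)}{q+1}}
  = \frac{1}{|Q_{r_*}|}\iint_{Q_{r_*}^{(\lambda,\tilde\theta_{r_*})}}\frac{|u|^{p^\sharp}}{r_*^{p^\sharp}}\dx\dt
  \le \Big(\frac{s}{r_*}\Big)^{n+p^\sharp+q+1}\frac{1}{|Q_s|}\iint_{Q_s^{(\lambda,\tilde\theta_{r_*})}}\frac{|u|^{p^\sharp}}{s^{p^\sharp}}\dx\dt.
\end{equation*}
Now I would argue by a continuity/intermediate-value argument: either $\tilde\theta_{r_*}$ already satisfies the defining inequality for $\tilde\theta_s$ — in which case $\tilde\theta_s\le\tilde\theta_{r_*}$ is impossible unless we're in the trivial regime, so instead I compare directly — or, more cleanly, I invoke that since $\theta\mapsto\frac{1}{|Q_s|}\iint_{Q_s^{(\lambda,\theta)}}\frac{|u|^{p^\sharp}}{s^{p^\sharp}}\dx\dt - \lambda^{2-p}\theta^{\frac{2p^\sharp+n(1-q)}{q+1}}$ is strictly decreasing, the value $\tilde\theta_s$ is characterized by equality, so combining the displayed inequality with the identity defining $\tilde\theta_s$ gives $\tilde\theta_{r_*}^{\frac{2p^\sharp+n(1-q)}{q+1}}\le (s/r_*)^{n+p^\sharp+q+1}\tilde\theta_s^{\frac{2p^\sharp+n(1-q)}{q+1}}$, whence $\tilde\theta_{r_*}\le (s/r_*)^{\frac{(q+1)(n+p^\sharp+q+1)}{2p^\sharp+n(1-q)}}\tilde\theta_s$. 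Since $r_*\ge\rho$ and $\tilde\theta_s\le\theta_s$, this yields $\theta_\rho=\tilde\theta_{r_*}\le(s/\rho)^{\frac{(q+1)(n+p^\sharp+q+1)}{2p^\sharp+n(1-q)}}\theta_s$, which is (ii). The main subtlety here — the step I expect to be the real obstacle to state cleanly — is justifying that the enlargement-of-domain comparison can be combined with the monotonicity to land exactly on $\tilde\theta_s$ rather than on some intermediate quantity; one has to be careful that $\tilde\theta_{r_*}$ may be smaller than $\tilde\theta_s$, in which case the inequality goes the easy way, and larger, in which case the domain-comparison does the work. A clean way to package this is: if $\tilde\theta_{r_*}\le\tilde\theta_s$ we are done immediately; otherwise $\tilde\theta_{r_*}>\tilde\theta_s$, and then $\frac{1}{|Q_s|}\iint_{Q_s^{(\lambda,\tilde\theta_{r_*})}}\frac{|u|^{p^\sharp}}{s^{p^\sharp}}\dx\dt \le \frac{1}{|Q_s|}\iint_{Q_s^{(\lambda,\tilde\theta_s)}}\frac{|u|^{p^\sharp}}{s^{p^\sharp}}\dx\dt = \lambda^{2-p}\tilde\theta_s^{\frac{2p^\sharp+n(1-q)}{q+1}}$, and inserting this into the displayed bound gives the exponential estimate relating $\tilde\theta_{r_*}$ and $\tilde\theta_s$.

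For (iii), I apply (ii) with $s=R_o$, giving $\theta_\rho\le(R_o/\rho)^{\frac{(q+1)(n+p^\sharp+q+1)}{2p^\sharp+n(1-q)}}\theta_{R_o}$, and then bound $\theta_{R_o}$. If $\theta_{R_o}=\lambda$ there is nothing more to do. Otherwise $\theta_{R_o}=\tilde\theta_{r}$ for some $r\in[R_o,R_o]$, i.e. $\theta_{R_o}=\tilde\theta_{R_o}>\lambda$, and~\eqref{eq:theta-R0-lambda} gives $\tilde\theta_{R_o}^{\frac{2p^\sharp+n(1-q)}{q+1}}\le 4^{n+p^\sharp+q+1}\lambda^{\frac{2p^\sharp+n(1-q)}{q+1}}$, hence $\theta_{R_o}\le 4^{\frac{(q+1)(n+p^\sharp+q+1)}{2p^\sharp+n(1-q)}}\lambda$. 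Absorbing the constant $4^{\frac{(q+1)(n+p^\sharp+q+1)}{2p^\sharp+n(1-q)}}$ into the factor $(R_o/\rho)^{\frac{(q+1)(n+p^\sharp+q+1)}{2p^\sharp+n(1-q)}}$ by writing it as $(4R_o/\rho)^{\frac{(q+1)(n+p^\sharp+q+1)}{2p^\sharp+n(1-q)}}$ (legitimate since $4\ge1$ and the exponent is positive) yields exactly the asserted bound (iii). Throughout, the positivity of the exponent $\frac{(q+1)(n+p^\sharp+q+1)}{2p^\sharp+n(1-q)}$ — equivalently $2p^\sharp+n(1-q)>0$, which is the dimensional restriction $q<\frac{n+2}{n-2}$ for $p\le q+1$ and $p>\frac n2(q-1)$ for $p>q+1$ already imposed — is what makes all the monotonicity comparisons go through.
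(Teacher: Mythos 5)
Your proof is correct and follows essentially the same route as the paper: part (i) rests on the monotonicity in $\theta$ of the defining condition for $\tilde\theta_s$ (equivalently, the measure-ratio comparison between $Q_s^{(\lambda,\theta_\rho)}$ and $Q_s^{(\lambda,\tilde\theta_s)}$), part (ii) on enlarging the domain of integration from the maximizing radius to $s$ and comparing the defining (in)equalities, and part (iii) on combining (ii) at $s=R_o$ with~\eqref{eq:theta-R0-lambda}. The only cosmetic differences are that the paper works with the specific radius $\tilde\rho$ rather than a generic maximizer $r_*$ and performs the cylinder inclusion in one step, and your parenthetical about the ``non-strict form'' of~\eqref{eq:theta-tilde-intrinsic} when $\tilde\theta_{r_*}=\lambda$ should instead simply dismiss that case as trivial (since then $\theta_\rho=\lambda\le\theta_s$).
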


\begin{proof}
(i): Clearly $\tilde \theta_s \leq \theta_s \leq \theta_\rho$, which implies $Q_s^{(\lambda,\theta_\rho)} \subset Q_s^{(\lambda,\tilde \theta_s)}$. Thus
\begin{align*}
\biint_{Q_s^{(\lambda,\theta_\rho)}} \frac{|u|^{p^\sharp}}{s^{p^\sharp}} \, \d x \d t &\leq \left( \frac{\theta_\rho}{\tilde \theta_s} \right)^{n \frac{q-1}{q+1}} \biint_{Q_s^{(\lambda,\tilde \theta_s)}} \frac{|u|^{p^\sharp}}{s^{p^\sharp}} \, \d x \d t \\
&\leq \left( \frac{\theta_\rho}{\tilde \theta_s} \right)^{n \frac{q-1}{q+1}} \tilde \theta_s^\frac{2p^\sharp}{q+1} = \theta_\rho^{n \frac{q-1}{q+1}}\tilde \theta_s^\frac{2p^\sharp + n (1-q)}{q+1} \leq \theta_\rho^\frac{2 p^\sharp}{q+1},
\end{align*}
where we have used the fact $2p^\sharp + n (1-q)>0$ that follows from
the assumption $q<\max\{\frac{n+2}{n-2},\frac{2p}{n}+1\}$. 

(ii): If $\theta_\rho = \lambda$ the claim clearly holds. Suppose that $\lambda < \theta_\rho$ and $s \in [\tilde \rho,R_o]$. We have
\begin{align*}
\theta_\rho^\frac{2 p^\sharp + n(1-q)}{q+1} &= \tilde \theta_{\tilde \rho}^\frac{2 p^\sharp + n(1-q)}{q+1} = \frac{\lambda^{p-2}}{|Q_{\tilde \rho}|} \iint_{Q_{\tilde \rho}^{(\lambda, \theta_{\tilde \rho})}} \frac{|u|^{p^\sharp}}{\tilde \rho^{p^\sharp}} \, \d x \d t \\
&\leq \left( \frac{s}{\tilde \rho} \right)^{n+ p^\sharp + q +1} \frac{\lambda^{p-2}}{|Q_{s}|} \iint_{Q_s^{(\lambda,\theta_s)}} \frac{|u|^{p^\sharp}}{s^{p^\sharp}} \, \d x \d t \\
&\leq \left( \frac{s}{\tilde \rho} \right)^{n+ p^\sharp + q +1} \theta_s^\frac{2 p^\sharp + n(1-q)}{q+1},
\end{align*}
which implies the claim. If $s \in [\rho, \tilde \rho)$, then $\theta_\rho = \theta_s$ and the claim clearly holds. 

(iii): By choosing $s = R_o$ in (ii), and using~\eqref{eq:theta-R0-lambda} (observe that $\theta_{R_o} = \tilde \theta_{R_o}$) we have
$$
\theta_\rho \leq \left( \frac{R_o}{\rho} \right)^\frac{(q+1)(n+ p^\sharp +q+ 1)}{2 p^\sharp + n(1-q)} \theta_{R_o} \leq \left( \frac{4 R_o}{\rho} \right)^\frac{(q+1)(n+ p^\sharp +q+ 1)}{2 p^\sharp + n(1-q)} \lambda,
$$
completing the proof.
\end{proof}

\subsection{Vitali type covering property}
\label{sec:Vitali-covering}

\begin{lemma} \label{lem:vitali}
Let $\lambda \geq \lambda_o$. There exists $\hat c = \hat c (n,p,q) \geq 20$ such that the following holds: Let $\mathcal F$ be any collection of cylinders $Q_{4r}^{(\lambda,\theta_{z;r}^{(\lambda)})} (z)$, where $Q_{r}^{(\lambda,\theta_{z;r}^{(\lambda)})} (z)$ is a cylinder of the form that is constructed in Section~\ref{subsec:cylinder-construction} with radius $r \in (0,\frac{R_o}{\hat c})$. Then, there exists a countable, disjoint subcollection $\mathcal G$ of $\mathcal F$ such that
$$
\bigcup_{Q \in \mathcal F} Q \subset \bigcup_{Q \in \mathcal G} \widehat Q,
$$ 
where $\widehat{Q}$ denotes the $\frac14 \hat c$-times enlarged $Q$, i.e. if $Q = Q_{4r}^{(\lambda,\theta_{z;r}^{(\lambda)})} (z)$ then $\widehat Q = Q_{\hat c r}^{(\lambda,\theta_{z;r}^{(\lambda)})} (z)$.
\end{lemma}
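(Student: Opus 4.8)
The plan is to adapt the standard Vitali covering argument for parabolic cylinders (as used in the references cited, e.g.~\cite{Boegelein-Duzaar-Scheven:2022, BDKS:2018, Kinnunen-Lewis:1}) to the present anisotropic cylinders $Q_r^{(\lambda,\theta_{z;r}^{(\lambda)})}(z)$. The essential point is that these cylinders are not nested in a naive way, because the parameter $\theta_{z;r}^{(\lambda)}$ depends both on the center $z$ and on the radius $r$; however, Lemma~\ref{lem:theta-rho-prop}(ii) provides exactly the needed control of how $\theta$ changes when the radius is varied, and this will play the role of a ``doubling-type'' estimate that replaces the usual monotonicity. First I would show that the diameters of the cylinders in $\mathcal F$ (in both the spatial and the temporal direction) are bounded, so that one can carry out the usual greedy selection. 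For a cylinder $Q_{4r}^{(\lambda,\theta_{z;r}^{(\lambda)})}(z)$ with $r<R_o/\hat c$, Lemma~\ref{lem:theta-rho-prop}(iii) gives $\theta_{z;r}^{(\lambda)}\le (4R_o/r)^{\gamma}\lambda$ with $\gamma=\frac{(q+1)(n+p^\sharp+q+1)}{2p^\sharp+n(1-q)}$, and since $q>1$ the spatial scaling factor $\theta^{\frac{1-q}{1+q}}$ is bounded above by $1$ (so the spatial radius is at most $4r<R_o$), while the temporal half-length $\lambda^{2-p}(4r)^{q+1}$ together with the bound on $\theta$ in the degenerate direction gives a uniform bound. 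Hence we may split $\mathcal F$ into countably many subfamilies $\mathcal F_j$ according to dyadic ranges $2^{-j-1}R_o\le 4r<2^{-j}R_o$ and select greedily within each.

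\medskip

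Concretely, I would proceed by the usual iterative selection: enumerate a maximal disjoint subcollection by, at each stage, picking a cylinder $Q\in\mathcal F$ with radius $r$ as large as possible (up to a factor of $2$, since the supremum of radii need not be attained) among those disjoint from the previously chosen ones. This produces a countable disjoint subcollection $\mathcal G$. The heart of the argument is then to verify the covering inclusion: given an arbitrary $Q=Q_{4r}^{(\lambda,\theta_{z;r}^{(\lambda)})}(z)\in\mathcal F$, it must intersect some $Q'=Q_{4r'}^{(\lambda,\theta_{z';r'}^{(\lambda)})}(z')\in\mathcal G$ with $r'\ge r/2$ (otherwise $Q$ itself could have been added to $\mathcal G$). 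One then has to show that $Q\subset\widehat{Q'}=Q_{\hat c r'}^{(\lambda,\theta_{z';r'}^{(\lambda)})}(z')$ provided $\hat c$ is chosen large enough depending only on $n,p,q$. For the temporal direction this is elementary: both time intervals have half-length comparable to $\lambda^{2-p}r^{q+1}$ resp.\ $\lambda^{2-p}(r')^{q+1}$, they overlap, and $r\le 2r'$, so $\Lambda_{4r}^{(\lambda)}(t_z)\subset\Lambda_{\hat c r'}^{(\lambda)}(t_{z'})$ as soon as $\hat c\gtrsim 1$. For the spatial direction one needs $|x_z-x_{z'}|+ (\theta_{z;r}^{(\lambda)})^{\frac{1-q}{1+q}}4r \le (\theta_{z';r'}^{(\lambda)})^{\frac{1-q}{1+q}}\hat c r'$; since the balls overlap, $|x_z-x_{z'}|\le (\theta_{z;r}^{(\lambda)})^{\frac{1-q}{1+q}}4r+(\theta_{z';r'}^{(\lambda)})^{\frac{1-q}{1+q}}4r'$, so it suffices to compare $(\theta_{z;r}^{(\lambda)})^{\frac{1-q}{1+q}}$ with $(\theta_{z';r'}^{(\lambda)})^{\frac{1-q}{1+q}}$.

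\medskip

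\textbf{The main obstacle} is precisely this last comparison of the $\theta$-parameters at different centers and radii. The subtlety is that $\theta_{z;r}^{(\lambda)}$ and $\theta_{z';r'}^{(\lambda)}$ are defined via integral quantities over \emph{different} cylinders, so one cannot directly invoke Lemma~\ref{lem:theta-rho-prop}(ii), which only compares $\theta_{z;\rho}$ with $\theta_{z;s}$ at a \emph{fixed} center. The standard way around this is to note that since $Q$ and $Q'$ intersect and $r\le 2r'$, the cylinder $Q$ is contained in a fixed dilate of $Q'$ (in the ``$\theta$-free'' sense, i.e.\ in $Q^{(\lambda,\theta')}_{cr'}(z')$ for the smaller of the two $\theta$'s and a controlled constant $c$), whence by the defining (sub)intrinsic relation one obtains $\theta_{z;r}^{(\lambda)}\le c\,\theta_{z';r'}^{(\lambda)}$ with $c=c(n,p,q)$ — here one uses the monotonicity of $\rho\mapsto\theta_{z;\rho}$ and the sunrise-type construction together with part (ii) applied at center $z'$ after first passing through an intermediate cylinder. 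Since $q>1$, the exponent $\frac{1-q}{1+q}$ is negative, so $(\theta_{z;r}^{(\lambda)})^{\frac{1-q}{1+q}}\le c\,(\theta_{z';r'}^{(\lambda)})^{\frac{1-q}{1+q}}$ and the spatial inclusion follows once $\hat c\ge 20$ is taken large enough to absorb all the constants; the lower bound $\hat c\ge 20$ is harmless and is imposed for convenience in later applications. Finally, the countability of $\mathcal G$ follows from disjointness and the uniform lower bound on the measure of the selected cylinders within each dyadic subfamily, and the whole selection is organized subfamily by subfamily in the usual way, concluding the proof.
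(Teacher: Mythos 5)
Your overall architecture matches the paper's: a dyadic decomposition of $\mathcal F$ by radius, a stage-by-stage maximal disjoint selection, the elementary temporal inclusion, and the reduction of the spatial inclusion to a comparison of the two scaling parameters $\theta$. You also correctly identify the cross-center $\theta$-comparison as the main obstacle. However, the comparison you propose to prove points in the wrong direction, and the deduction you draw from it contains a sign error. Since $\frac{1-q}{1+q}<0$, the inclusion $B_{4r}^{(\theta_{z;r}^{(\lambda)})}(x)\subset B_{\hat c r'}^{(\theta_{z';r'}^{(\lambda)})}(x')$ requires an upper bound for $(\theta_{z;r}^{(\lambda)})^{\frac{1-q}{1+q}}$ in terms of $(\theta_{z';r'}^{(\lambda)})^{\frac{1-q}{1+q}}$, which for the raw parameters means $\theta_{z';r'}^{(\lambda)}\le c\,\theta_{z;r}^{(\lambda)}$, i.e.\ the \emph{selected} cylinder's $\theta$ must be bounded by the \emph{arbitrary} cylinder's $\theta$ (this is exactly \eqref{eq:theta*-theta} in the paper). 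You instead assert $\theta_{z;r}^{(\lambda)}\le c\,\theta_{z';r'}^{(\lambda)}$ and then claim that ``since the exponent is negative'' the same inequality holds for the powers $\frac{1-q}{1+q}$; but raising $a\le cb$ to a negative power reverses the inequality, so this step is invalid, and the raw inequality you propose to establish would not yield the spatial inclusion.

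Moreover, proving the correct comparison $\theta_{z';r'}^{(\lambda)}\le c(n,p,q)\,\theta_{z;r}^{(\lambda)}$ is the substantive content of the paper's proof, and your sketch of it is too thin to close the gap. Lemma~\ref{lem:theta-rho-prop}\,(ii) compares $\theta$ at a \emph{fixed} center for different radii only, so together with monotonicity it cannot by itself deliver a cross-center bound; also, your proposed containment of $Q$ in a dilate of $Q'$ ``for the smaller of the two $\theta$'s'' presupposes knowledge of which $\theta$ is smaller, which is what is to be proved. The paper's argument first disposes of the trivial cases ($\theta_{z';r'}^{(\lambda)}=\lambda$, or $\theta_{z';r'}^{(\lambda)}\le\theta_{z;r}^{(\lambda)}$ outright), then works at the stopping radius $\tilde r'$ where the intrinsic identity~\eqref{eq:theta-tilde-intrinsic} holds, and distinguishes $\tilde r'>\frac{R_o}{\eta}$ (handled via~\eqref{eq:theta-R0-lambda} and the choice of $\lambda_o$) from $\tilde r'\le\frac{R_o}{\eta}$, where the provisional assumption $\theta_{z';r'}^{(\lambda)}\ge\theta_{z;r}^{(\lambda)}$ shrinks the ball $B_{4\tilde r'}^{(\theta_{z';r'}^{(\lambda)})}(x')$ inside $B_{\eta\tilde r'}^{(\theta_{z;\eta\tilde r'}^{(\lambda)})}(x)$, so that the defining integral can be transferred to a cylinder centered at $z$ and controlled by Lemma~\ref{lem:theta-rho-prop}\,(i). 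Without this transfer step, and with the direction of the comparison corrected, your argument does not go through as written.
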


\begin{proof}
As in~\cite{Boegelein-Duzaar-Scheven:2022} (see also~\cite{Boegelein-Duzaar-Scheven:2018, Boegelein-Duzaar-Korte-Scheven,BDKS:2018}) consider
$$
\mathcal F_j := \left\{ Q_{4r}^{(\lambda,\theta_{z;r}^{(\lambda)})} (z) \in \mathcal F : \tfrac{R_o}{2^j \hat c} < r \leq \tfrac{R_o}{2^{j-1} \hat c} \right\}, \quad j \in \N .
$$
Let $\mathcal G_1$ be a maximal disjoint subcollection of $\mathcal F_1$, which is finite by Lemma~\ref{lem:theta-rho-prop} (iii). At stage $k \in \N_{\geq 2}$  let $\mathcal G_k$ be a maximal disjoint collection of cylinders in 
$$
 \left\{ Q\in \mathcal F_k : Q \cap Q^* = \varnothing \text{ for any } Q^* \in \bigcup_{j=1}^{k-1} \mathcal G_j \right\},
$$
and define
$$
\mathcal G = \bigcup_{j=1}^{\infty} \mathcal G_j,
$$
which is countable since $\mathcal G_j$ for every $j \in \N$ is finite.

Our objective to show is that for every $Q \in \mathcal F$ there exists $Q^* \in \mathcal G$ such that $Q \cap Q^* \neq \varnothing$ and $Q \subset \widehat Q^*$. To this end, let $Q = Q_{4r}^{(\lambda,\theta_{z;r}^{(\lambda)})} (z) \in \mathcal F$, which implies that there exists $j \in \N$ such that $Q \in \mathcal F_j$. By maximality of $\mathcal G_j$ there exists $Q^* = Q_{4r_*}^{(\lambda,\theta_{z_*;r_*}^{(\lambda)})} (z_*) \in \bigcup_{i=1}^j \mathcal G_i$ such that $Q \cap Q^* \neq \varnothing$. By definitions of $\mathcal F_j$ and $\mathcal G_j$ it follows that $r < 2 r_*$. This immediately implies
\begin{equation} \label{eq:time-inclusion}
\Lambda_{4r}^{(\lambda)}(t) \subset \Lambda_{12 r_*}^{(\lambda)}(t_*).
\end{equation}
Let $\tilde r_* \in [r_*,R_o]$ be defined as in the earlier section. It follows that
\begin{equation}\label{eq:reverse-time-inclusion}
\Lambda_{4 \tilde r_*}^{(\lambda)}(t_*) \subset
\Lambda^{(\lambda)}_{10 \tilde r_*}(t).
\end{equation}
Next we show that 
\begin{equation}\label{eq:theta*-theta}
\theta_{z_*;r_*}^{(\lambda)} \leq
64^\frac{(q+1)(n+p^\sharp+q+1)}{2p^\sharp + n(1-q)}
\theta_{z;r}^{(\lambda)}.
\end{equation}
Observe that if $\theta_{z_*,r_*}^{(\lambda)} = \lambda$ (which implies $\tilde r_* = R_o$) we have
$$
\theta_{z_*;r_*}^{(\lambda)} = \lambda \leq \theta_{z;r}^{(\lambda)}.
$$
On the other hand, if $\lambda < \theta_{z_*;r_*}^{(\lambda)} (= \theta_{z_*;\tilde r_*}^{(\lambda)} = \tilde \theta_{z_*;\tilde r_*}^{(\lambda)})$ we have by~\eqref{eq:theta-tilde-intrinsic} that 
\begin{equation}\label{eq:theta*-intrinsic}
( \theta_{z_*;r_*}^{(\lambda)})^\frac{2 p^\sharp + n(1-q)}{q+1} =
\frac{\lambda^{p-2}}{|Q_{\tilde r_*}|} \iint_{Q_{\tilde
    r_*}^{(\lambda, \theta_{z_*;r_*}^{(\lambda)})} (z_*)}
\frac{|u|^{p^\sharp}}{\tilde r_*^{p^\sharp}} \, \d x \d t.
\end{equation}
Fix $\eta = 16$. By distinguishing between the cases $\tilde r_* \leq \frac{R_o}{\eta}$ and $\tilde r_* > \frac{R_o}{\eta}$, for the latter we obtain 
\begin{align*}
( \theta_{z_*;r_*}^{(\lambda)})^\frac{2 p^\sharp + n(1-q)}{q+1}
&\leq \lambda^{p-2} \left( \frac{R}{\tilde r_\ast} \right)^{n+p^\sharp+q+1}
\biint_{Q_{R}(z_o)} \frac{|u|^{p^\sharp}}{R^{p^\sharp}} \, \d x \d t\\
&\leq (4 \eta)^{n+p^\sharp+q+1} ( \theta_{z;r}^{(\lambda)})^\frac{2 p^\sharp + n(1-q)}{q+1}
\end{align*}
similarly as in~\eqref{eq:theta-R0-lambda}, since $\lambda \le \theta_{z;r}^{(\lambda)}$. For the former case, we may assume that $\theta_{z_*;r_*}^{(\lambda)} \geq \theta_{z;r}^{(\lambda)}$ since otherwise~\eqref{eq:theta*-theta} clearly holds. Furthermore, observe that $r \leq 2 r_* \leq 2 \tilde r_* \leq \eta \tilde r_*$, which implies
$$
\theta_{z_*;r_*}^{(\lambda)} \geq \theta_{z;r}^{(\lambda)} \geq \theta_{z;\eta \tilde r_*}^{(\lambda)}.
$$
Thus, we have
$$
B_{4 \tilde r_*}^{(\theta_{z_*,r_*}^{(\lambda)})} (x_*) \subset B_{\eta \tilde r_*}^{(\theta_{z, \eta \tilde r_*}^{(\lambda)})} (x).
$$
Using this together with~\eqref{eq:reverse-time-inclusion} to
estimate the right-hand side of~\eqref{eq:theta*-intrinsic} from
above, we deduce
\begin{align*}
	( \theta_{z_*;r_*}^{(\lambda)})^\frac{2 p^\sharp + n(1-q)}{q+1}
	&\leq
	\frac{ \eta^{p^\sharp}\lambda^{p-2}}{|Q_{\tilde r_*}|} \iint_{Q_{\eta \tilde r_*}^{(\lambda, \theta_{z,\eta \tilde r_*}^{(\lambda)})} (z)} \frac{|u|^{p^\sharp}}{ (\eta \tilde r_*)^{p^\sharp}} \, \d x \d t \\
	&\leq
	\eta^{n+p^\sharp+q+1} (\theta_{z;r}^{(\lambda)})^\frac{2 p^\sharp + n(1-q)}{q+1},
\end{align*}
where we used Lemma~\ref{lem:theta-rho-prop} (i) with
$\rho=s=\eta\tilde r_\ast$ for the last estimate.
Therefore, we have shown that~\eqref{eq:theta*-theta} holds in every
case. By choosing 
$$
\hat c \geq  4 \big(4\cdot 64^\frac{(q-1)(n+p^\sharp+q+1)}{2p^\sharp + n(1-q)}+1\big) \geq 20,
$$
it follows that $B_{4r}^{(\theta_{z;r})}(x) \subset B_{\hat c r_*}^{(\theta_{z_*;r_*})}(x_*)$. This is due to the fact that for every $x_1\in B_{4r}^{(\theta_{z;r})}(x)$ we have
\begin{align*}
|x_1 - x_*| &\leq |x_1 - x| + |x - x_*| \leq 2 \theta_{z;r}^\frac{1-q}{1+q} (4r) + \theta_{z_*;r_*}^\frac{1-q}{1+q} (4r_*) \\
&\leq 4 \theta_{z_*;r_*}^\frac{1-q}{1+q} r_* ( 4\cdot64^\frac{(q-1)(n+p^\sharp+q+1)}{2p^\sharp + n(1-q)}  + 1 ) \leq \hat c \theta_{z_*;r_*}^\frac{1-q}{1+q} r_*,
\end{align*}
where we used $Q \cap  Q^*\neq \varnothing$, $r < 2r_*$ and~\eqref{eq:theta*-theta}. By also recalling~\eqref{eq:time-inclusion}, we have 
$$
Q = Q_{4r}^{(\lambda,\theta_{z;r}^{(\lambda)})} (z) \subset \widehat Q^* = Q_{\hat c r_*}^{(\lambda,\theta_{z_*;r_*}^{(\lambda)})} (z_*),
$$
which completes the proof.
\end{proof}

\subsection{Stopping time argument}
\label{sec:stopping-time}

Let
\begin{equation} \label{eq:lambdao}
\lambda_o := 1+ \left[ \biint_{Q_{4R}} \frac{|u|^{p^\sharp}}{(4R)^{p^\sharp}} + |Du|^p + |F|^p \, \d x \d t  \right]^\frac{d}{p}.
\end{equation}
Consider $\lambda > \lambda_o$ and $r\in (0,2R]$ and define
$$
\mathbf E(r,\lambda) := \big\{ z \in Q_r: z \text{ is a Lebesgue point of } |Du| \text{ and } |Du|(z)> \lambda \big\},
$$
in which Lebesgue points are understood in context of cylinders of the type $Q_{\rho}^{(\lambda,\theta_{\rho})}$ constructed in Section~\ref{subsec:cylinder-construction}.

Consider radii $R \leq R_1 < R_2 \leq 2 R$ and concentric cylinders $Q_R \subset Q_{R_1} \subset Q_{R_2} \subset Q_{2R}$. Fix $z_o \in \mathbf E (R_1,\lambda)$ and denote $\theta_s = \theta^{(\lambda)}_{z_o; s}$ for $s \in (0,R_o]$. By definition of $\mathbf{E}(R_1,\lambda)$ we have
\begin{equation} \label{eq:Du-leb-point}
\liminf_{s\to 0}\biint_{Q_s^{(\lambda,\theta_s)}(z_o)} |Du|^p + |F|^p \, \d x \d t \geq |Du|^p(z_o) > \lambda^p.
\end{equation}
Let $\hat c$ denote the constant from the Vitali type covering lemma, Lemma~\ref{lem:vitali}, and consider
\begin{equation} \label{eq:lambda-B-lambdao}
\lambda > B \lambda_o, \quad \text{where}\quad B := \left( \frac{4 \hat c R}{R_2-R_1} \right)^\frac{d p^\sharp(n+2)(q+1)}{p (2p^\sharp + n(1-q))} >1.
\end{equation}
Let $\frac{R_2-R_1}{\mathfrak m} \leq s \leq R_o$, where $\mathfrak m
= \hat c \lambda^\frac{p^\sharp +1 -
  p-q}{q+1}$. By~\eqref{eq:lambdao}, 
Lemma~\ref{lem:theta-rho-prop}\,(iii) and \eqref{choice-R_o} we have
\begin{align*}
\biint_{Q_s^{(\lambda,\theta_s)}(z_o)} &|Du|^p + |F|^p \, \d x \d t \leq \frac{|Q_{4R}|}{\big|Q_s^{(\lambda,\theta_s)}\big|}  \biint_{Q_{4R}} |Du|^p + |F|^p \, \d x \d t \\
&\leq \left(\frac{4R}{s} \right)^{n+q+1} \lambda^{p-2} \theta_s^\frac{n(q-1)}{q+1} \lambda_o^\frac{p}{d} \\
&\leq \left(\frac{4R}{s} \right)^{n+q+1}  \left(\frac{4R_o}{s} \right)^\frac{n(q-1)(n+p^\sharp+q+1)}{2p^\sharp + n(1-q)} \lambda^{p-2+n\frac{q-1}{q+1}} \lambda_o^\frac{p}{d} \\
&\leq \lambda^\frac{(p^\sharp +1 - p-q)(n+q+1)}{q+1} \left( \frac{4\hat c R}{R_2 - R_1} \right)^\frac{p^\sharp(n+2)(q+1)}{2p^\sharp + n(1-q)} \lambda^{p-2+n\frac{q-1}{q+1}} \lambda_o^\frac{p}{d} \\
&= (B \lambda_o)^\frac{p}{d} \lambda^{p^\sharp-q-1 + n \frac{p^\sharp -p}{q+1}} < \lambda^p.
\end{align*}
By the above estimate,~\eqref{eq:Du-leb-point} and the continuity of the integral (w.r.t. $s$) there exists a maximal radius $\rho_{z_o} \in (0,\frac{R_2-R_1}{\mathfrak m})$ such that 
\begin{equation} \label{eq:lambda-rho}
\biint_{Q_{\rho_{z_o}}^{(\lambda,\theta_{\rho_{z_o}})}(z_o)} |Du|^p + |F|^p \, \d x \d t = \lambda^p.
\end{equation}
The maximality of the radius implies 
\begin{equation}\label{eq:lambda-s}
\biint_{Q_{s}^{(\lambda,\theta_{s})}(z_o)} |Du|^p + |F|^p \, \d x \d t < \lambda^p \quad \text{for every } s \in (\rho_{z_o}, R_o].
\end{equation}
By combining the last inequality with Lemma~\ref{lem:theta-rho-prop}\,(ii) and using the fact that $\rho \mapsto \theta_\rho$ is non-increasing, we have
\begin{align} \label{eq:grad-s-lambda}
\biint_{Q_{s}^{(\lambda,\theta_{\rho_{z_o}})}(z_o)} |Du|^p + |F|^p \, \d x \d t &\leq \left( \frac{\theta_{\rho_{z_o}}}{\theta_s} \right)^{n \frac{q-1}{q+1}}\biint_{Q_{s}^{(\lambda,\theta_{s})}(z_o)} |Du|^p + |F|^p \, \d x \d t \nonumber \\
&< \left( \frac{s}{\rho_{z_o}} \right)^\frac{n(q-1)(n+ p^\sharp +q+ 1)}{2 p^\sharp + n(1-q)} \lambda^p
\end{align}
for every $s \in (\rho_{z_o}, R_o]$. Observe that also clearly $Q_{\hat c \rho_{z_o}}^{(\lambda,\theta_{\rho_{z_o}})} (z_o) \subset Q_{R_2}$.

\subsection{A reverse H\"older inequality}
\label{sec:reverse-Holder-final-section}

Fix $z_o \in \mathbf E (R_1,\lambda)$ and $\lambda > B \lambda_o$ as defined in~\eqref{eq:lambda-B-lambdao}. We will show that
\begin{align} \label{eq:reverse-holder-rho}
\biint_{Q_{\rho_{z_o}}^{(\lambda,\theta_{\rho_{z_o}})}(z_o)} |Du|^p \, \d x \d t &\leq c \left( \biint_{Q_{4 \rho_{z_o}}^{(\lambda,\theta_{\rho_{z_o}})}(z_o)} |Du|^{\nu p} \, \d x \d t \right)^\frac{1}{\nu} \\
&\quad + c \biint_{Q_{4 \rho_{z_o}}^{(\lambda,\theta_{\rho_{z_o}})}(z_o)} |F|^p \, \d x \d t, \nonumber
\end{align}
for exponents $\max \left\{\frac{n(q+1)}{p(n+q+1)}, \frac{p-1}{p}, \frac{n}{n+2},\frac{n}{n+2} \left(1+\frac{2}{p}- \frac{2}{q} \right) \right\} \leq \nu \leq 1$ and a constant $c = c(n,p,q,C_o,C_1) > 0$.

First, we consider the case $\tilde \rho_{z_o} \leq 2
\rho_{z_o}$. Observe that this implies $\tilde \rho_{z_o}<R_o$, and
therefore $\lambda < \theta_{\rho_{z_o}} = \theta_{\tilde \rho_{z_o}}
= \tilde \theta_{\tilde \rho_{z_o}}$. By
Lemma~\ref{lem:theta-rho-prop}\,(i) with $s = 2 \tilde \rho_{z_o}$
and~\eqref{eq:theta-tilde-intrinsic} we have
$$
\biint_{Q_{2 \tilde \rho_{z_o}}^{(\lambda, \theta_{\rho_{z_o}})}(z_o)} \frac{|u|^{p^\sharp}}{(2 \tilde \rho_{z_o})^{p^\sharp}} \, \d x \d t \leq \theta_{\rho_{z_o}}^\frac{2p^\sharp}{q+1} = \biint_{Q_{\tilde \rho_{z_o}}^{(\lambda, \theta_{\rho_{z_o}})}(z_o)} \frac{|u|^{p^\sharp}}{\tilde \rho_{z_o}^{p^\sharp}} \, \d x \d t,
$$
i.e., condition~\eqref{eq:theta-intrinsic} holds with $C_\theta = 1$
and $\rho=\tilde\rho_{z_o}$. By~\eqref{eq:grad-s-lambda} and~\eqref{eq:lambda-rho} we deduce
\begin{align*}
4^\frac{n(1-q)(n+p^\sharp+q+1)}{2p^\sharp + n(1-q)} &\biint_{Q_{2 \tilde \rho_{z_o}}^{(\lambda, \theta_{\rho_{z_o}})}(z_o)} |Du|^p + |F|^p \, \d x \d t \\
&< \lambda^p = \biint_{Q_{\rho_{z_o}}^{(\lambda,\theta_{\rho_{z_o}})}(z_o)} |Du|^p + |F|^p \, \d x \d t \\
&\leq 2^{n+q+1} \biint_{Q_{\tilde \rho_{z_o}}^{(\lambda,\theta_{\rho_{z_o}})}(z_o)} |Du|^p + |F|^p \, \d x \d t,
\end{align*}
which implies that also~\eqref{eq:lambda-intrinsic} holds with $C_\lambda = C_\lambda (n,p,q)$. Thus, we can use Lemma~\ref{lem:reverse-holder} to obtain
\begin{align*}
\biint_{Q_{\rho_{z_o}}^{(\lambda,\theta_{\rho_{z_o}})}(z_o)} |Du|^p \, \d x \d t &\leq 2^{n+q+1} \biint_{Q_{\tilde \rho_{z_o}}^{(\lambda,\theta_{\rho_{z_o}})}(z_o)} |Du|^p \, \d x \d t \\
& \leq c \left( \biint_{Q_{4 \rho_{z_o}}^{(\lambda,\theta_{\rho_{z_o}})}(z_o)} |Du|^{\nu p} \, \d x \d t \right)^\frac{1}{\nu} \\
&\quad + c \biint_{Q_{4 \rho_{z_o}}^{(\lambda,\theta_{\rho_{z_o}})}(z_o)} |F|^p \, \d x \d t,
\end{align*}
for $c = c(n,p,q,C_o,C_1)$. This proves~\eqref{eq:reverse-holder-rho}
in the first case. 

Then, we consider the case $\tilde \rho_{z_o} > 2 \rho_{z_o}$. Observe that by~\eqref{eq:grad-s-lambda} and~\eqref{eq:lambda-rho} we have
\begin{align*}
2^\frac{n(1-q)(n+p^\sharp+q+1)}{2p^\sharp + n(1-q)} &\biint_{Q_{2 \rho_{z_o}}^{(\lambda, \theta_{\rho_{z_o}})}(z_o)} |Du|^p + |F|^p \, \d x \d t \\
&< \lambda^p 
= \biint_{Q_{\rho_{z_o}}^{(\lambda,\theta_{\rho_{z_o}})}(z_o)} |Du|^p + |F|^p \, \d x \d t,
\end{align*}
such that~\eqref{eq:lambda-intrinsic} holds with $C_\lambda =
C_\lambda(n,p,q)$ and
$\rho=\rho_{z_o}$. Furthermore,~\eqref{eq:theta-degenerate-1}$_1$ with
$C_\theta = 1$ holds by Lemma~\ref{lem:theta-rho-prop}\,(i). For the
proof of \eqref{eq:theta-degenerate-1}$_2$, we first consider the case
$\tilde \rho_{z_o} \in [\frac{R_o}{2},R_o]$. In this case, by Lemma~\ref{lem:theta-rho-prop}\,(iii) and~\eqref{eq:lambda-rho} we have
\begin{align*}
	\theta_{\rho_{z_o}}^p
	=
	\theta_{\tilde \rho_{z_o}}^p
	&\leq
	8^\frac{p(q+1)(n+ p^\sharp +q+ 1)}{2 p^\sharp + n(1-q)} \lambda^p \\
	&=
	8^\frac{p(q+1)(n+ p^\sharp +q+ 1)}{2 p^\sharp + n(1-q)} \biint_{Q_{\rho_{z_o}}^{(\lambda,\theta_{\rho_{z_o}})}(z_o)} |Du|^p + |F|^p \, \d x \d t,
\end{align*}
which implies~\eqref{eq:theta-degenerate-1}$_2$ with
$C_\lambda=C_\lambda(n,p,q)$.
Now we are left with the
case $\tilde \rho_{z_o} \in (2 \rho_{z_o},\frac{R_o}{2})$. Observe
that since $\tilde \rho_{z_o} < R_o$, it follows that $\lambda <
\theta_{\rho_{z_o}} = \theta_{\tilde \rho_{z_o}} = \tilde
\theta_{\tilde \rho_{z_o}}$ by definition so that
Lemma~\ref{lem:theta-rho-prop}\,(i) and~\eqref{eq:theta-tilde-intrinsic} imply
\begin{equation*}
\biint_{Q_{2 \tilde \rho_{z_o}}^{(\lambda, \theta_{\rho_{z_o}})}(z_o)} \frac{|u|^{p^\sharp}}{(2 \tilde \rho_{z_o})^{p^\sharp}} \, \d x \d t \leq \theta_{\rho_{z_o}}^{\frac{2p^\sharp}{q+1}} = \biint_{Q_{\tilde \rho_{z_o}}^{(\lambda, \theta_{\rho_{z_o}})}(z_o)} \frac{|u|^{p^\sharp}}{\tilde \rho_{z_o}^{p^\sharp}} \, \d x \d t.
\end{equation*}
Furthermore, by $\theta_{\rho_{z_o}} = \theta_{\tilde \rho_{z_o}}$,
the monotonicity of $\rho\mapsto\theta_\rho$, Lemma~\ref{lem:theta-rho-prop}\,(ii) and~\eqref{eq:lambda-s} we obtain
\begin{align*}
\biint_{Q_{2 \tilde \rho_{z_o}}^{(\lambda, \theta_{\rho_{z_o}})}(z_o)}
  |Du|^p + |F|^p \, \d x \d t
  &\le
  \bigg(\frac{\theta_{\tilde\rho_{z_o}}}{\theta_{2\tilde\rho_{z_o}}}\bigg)^{n\frac{q-1}{q+1}}\biint_{Q_{2 \tilde \rho_{z_o}}^{(\lambda, \theta_{2\tilde \rho_{z_o}})}(z_o)}
  |Du|^p + |F|^p \, \d x \d t\\
  &< 2^\frac{n(q-1)(n+p^\sharp+q+1)}{2p^\sharp + n(1-q)}\lambda^p.
\end{align*}
Thus, $Q_{\tilde \rho_{z_o}}^{(\lambda, \theta_{\rho_{z_o}})}(z_o)$ is $\theta$-intrinsic (with $C_\theta = 1$) and $\lambda$-subintrinsic. We use Lemmas~\ref{lem:theta-absorb} and~\ref{lem:theta-rho-prop}\,(i) (observe that $\tilde \rho_{z_o} / 2 > \rho_{z_o}$) to obtain 
$$
\theta_{\rho_{z_o}}^\frac{2}{q+1} \leq c \lambda^\frac{2}{q+1} + \frac{3}{4} \left( \biint_{Q_{\tilde \rho_{z_o}  /2}^{(\lambda,\theta_{\rho_{z_o}})}(z_o)} \frac{|u|^{p^\sharp}}{(\tilde \rho_{z_o} / 2)^{p^\sharp}} \, \d x \d t \right)^\frac{1}{p^\sharp} \leq c \lambda^\frac{2}{q+1} + \frac{3}{4} \theta_{\rho_{z_o}}^\frac{2}{q+1}.
$$
Thus, by~\eqref{eq:lambda-rho}
$$
\theta_{\rho_{z_o}} \leq c \lambda = c \biint_{Q_{\rho_{z_o}}^{(\lambda,\theta_{\rho_{z_o}})}(z_o)} |Du|^p + |F|^p \, \d x \d t
$$
holds true, which implies~\eqref{eq:theta-degenerate-1}$_2$ with $C_\theta=C_\theta(n,p,q,C_o,C_1)$ also in
this final case. Therefore, we have established
that~\eqref{eq:lambda-intrinsic} and \eqref{eq:theta-degenerate-1}
hold true with $\rho=\rho_{z_o}$ in the case $\tilde\rho_{z_o}>2\rho_{z_o}$. 
This enables us to use Lemma~\ref{lem:reverse-holder} to conclude
that~\eqref{eq:reverse-holder-rho} holds in any case. 

\subsection{Final argument}
\label{sec:final-argument}

The rest of the proof is identical to~\cite[Sect. 6.5 \& 6.6]{Boegelein-Duzaar-Scheven:2022}. Hence, we refrain ourselves from repeating the computations and only sketch the final argument. 

We have that if $\lambda$ satisfies~\eqref{eq:lambda-B-lambdao}, then for every $z_o\in \mathbf{E}(R_1,\lambda)$ there exists a cylinder $Q_{\rho_{z_o}}^{(\lambda,\theta_{z_o;\rho_{z_o}})} (z_o)$ in which~\eqref{eq:lambda-rho},~\eqref{eq:lambda-s},~\eqref{eq:grad-s-lambda} and~\eqref{eq:reverse-holder-rho} hold true and Lemma~\ref{lem:vitali} is satisfied. Furthermore, $Q_{\hat c\rho_{z_o}}^{(\lambda,\theta_{z_o;\rho_{z_o}})} (z_o) \subset Q_{R_2}$ in which $\hat c$ is the constant from Lemma~\ref{lem:vitali}.

By denoting
$$
\mathbf{F}(r,\lambda) := \left\{ z \in Q_{r} : z \text{ is a Lebesgue point of } |F| \text{ and } |F|(z) > \lambda \right\},
$$
we deduce as in~\cite[Sect. 6.5]{Boegelein-Duzaar-Scheven:2022} that
\begin{align*}
\iint_{\mathbf{E}(R_1,\tilde \lambda)} |Du|^p \, \d x \d t \leq c \iint_{\mathbf{E}(R_2,\tilde \lambda)} \tilde \lambda^{(1-\nu)p} |Du|^{\nu p} \, \d x \d t +c \iint_{\mathbf{F}(R_2,\tilde \lambda)} |F|^p \, \d x \d t
\end{align*}
for every $\tilde \lambda \geq \eta B \lambda_o$, in which $\eta = \eta (n,p,q,C_o,C_1) \in (0,1]$ and $B$ and $\lambda_o$ are defined in~\eqref{eq:lambda-B-lambdao} and~\eqref{eq:lambdao}. 

By a truncation and Fubini type argument, the estimate in Theorem~\ref{thm:main} can be deduced exactly as in~\cite[Sect. 6.6]{Boegelein-Duzaar-Scheven:2022}.

\end{document}